\numberwithin{equation}{section}
\newtheorem{prop}{Proposition}[section]
\newtheorem{theo}[prop]{Theorem}
\newtheorem{lemm}[prop]{Lemma}
\newtheorem{coro}[prop]{Corollary}
\newtheorem*{claim*}{Claim}
\theoremstyle{definition}
\newtheorem{rema}[prop]{Remark}
\newtheorem{exam}[prop]{Example}
\newtheorem{defi}[prop]{Definition}
\newcommand{\NN}{\mathbb{N}}
\newcommand{\QQ}{\mathbb{Q}}
\newcommand{\RR}{\mathbb{R}}
\renewcommand{\SS}{\mathbb{S}}
\newcommand{\TT}{\mathbb{T}}
\newcommand{\ZZ}{\mathbb{Z}}
\newcommand{\cB}{\mathcal B}
\renewcommand{\cL}{\mathcal L}
\newcommand{\cS}{\mathcal S}
\newcommand{\cU}{\mathcal U}
\newcommand{\bangle}[1]{\left\langle #1 \right\rangle}
\DeclareMathOperator{\area}{area}
\DeclareMathOperator{\Ric}{Ric}
\define{\sff}{{\rm II}}
\define{\tfsff}{\accentset{\circ}{\sff}}
\let\oldmarginpar\marginpar
\renewcommand\marginpar[1]{\-\oldmarginpar[\raggedleft\footnotesize #1]%
{\raggedright\footnotesize #1}}
\DeclareMathOperator{\Index}{index}
\DeclareMathOperator{\genus}{genus}
\DeclareMathOperator{\vol}{vol}
\title[Minimal hypersurfaces with bounded index]{Minimal hypersurfaces with bounded index}
\author{Otis Chodosh}
\address{Department of Pure Mathematics and Mathematical Statistics, Cambridge University}
\curraddr{Department of Mathematics, Princeton University}
\email{ochodosh@math.princeton.edu}
\author{Daniel Ketover}
\address{Department of Mathematics, Princeton University}
\email{dketover@math.princeton.edu}
\author{Davi Maximo}
\address{Department of Mathematics, Stanford University}
\curraddr{Department of Mathematics, University of Pennsylvania} 
\email{dmaxim@math.upenn.edu}
\date{\today}
\begin{document}
	
\begin{abstract}
We prove a structural theorem that provides a precise local picture of how a sequence of closed embedded minimal hypersurfaces with uniformly bounded index (and volume if the ambient dimension is greater than three) in a Riemannian manifold $(M^{n},g)$, $3\leq n\leq 7$, can degenerate.  Loosely speaking, our results show that embedded minimal hypersurfaces with bounded index behave qualitatively like embedded stable minimal hypersurfaces, up to controlled errors. Several  compactness/finiteness theorems follow from our local picture.  
\end{abstract}

\maketitle


\section{Introduction}
Minimal hypersurfaces are critical points of the volume functional, and as such it is natural to study their existence and behavior from a variational point of view. A key invariant related to this point of view is the (Morse) index of such an object; and the assumption of bounded index (rather than genus or total curvature as in the classical theory) that we are concerned with in this paper is very natural.  Many minimal surfaces with bounded index are expected to arise from the variational min-max theory of Almgren--Pitts \cite{Pit76}. For instance, Marques--Neves \cite{MaNe13} have introduced non-trivial $k$-parameter sweep-outs in arbitrary three-manifolds (for any $k$), and the Morse index of the corresponding minimal surface is expected (generically) to be $k$. Moreover, Colding--Gabai \cite{ColdingGabai} have recently studied sequences of index one minimal surfaces and as they relate to the problem of classifying Heegaard splittings of three-manifolds.

In this work, we provide a precise local picture of how a sequence of embedded minimal hypersurfaces with uniformly bounded index (and volume if the ambient dimension is greater than three) in a Riemannian manifold $(M^{n},g)$, $3\leq n\leq 7$, can degenerate. We may roughly describe it as follows. For the sake of exposition, we assume here that $n=3$, the surfaces are all two-sided and have uniformly bounded area, $i.e.$, consider a sequence of embedded two-sided minimal surfaces $\Sigma_j$ in a closed Riemannian manifold $(M^3,g)$ so that $\Index(\Sigma_{j})\leq I$ and $\area_{g}(\Sigma_{j})\leq \Lambda$. Given these assumptions, our results imply that $\genus(\Sigma_{j})$ is uniformly bounded:
\begin{enumerate}[itemsep=5pt, topsep=5pt]
\item A blow-up argument allows us to extend Schoen's curvature estimates \cite{Sch83} to the case of bounded index (cf.\ Corollary \ref{coro-curv-bds}), to see that the curvature of $\Sigma_{j}$ is bounded away from at most $I$ points, where the index may be concentrating. Blowing up around these points at the scale of curvature, we produce a smooth non-flat embedded minimal surface in $\RR^{3}$ with index at most $I$. 
\item By \cite{Ros:oneSided} (see also \cite{ChMa14}), this limiting surface has genus bounded linearly above in terms of $I$. This can be seen as a kind of ``lower semi-continuity of topology,'' since any genus that is seen in the blow-up limit certainly contributes to the genus of $\Sigma_{j}$ for $j$ sufficiently large.
\item Furthermore, after passing to a subsequence, $\Sigma_{j}$ converges to a closed minimal surface in $(M,g)$ smoothly away from the points of index concentration. Such a surface has bounded genus.
\end{enumerate}
These facts, by themselves, are not sufficient to conclude that the genus of $\Sigma_{j}$ is uniformly bounded. The reason for this is that it is a priori possible that some genus is lost to the intermediate scales, and thus does not appear in the blow-up limits, or at the original scale. This can be illustrated by an analogy with the bubbling phenomenon for harmonic maps; a sequence of harmonic maps may degenerate to form a bubble tree and the key point in proving that the energy of the bubbles is the limit of the energy of the original sequence of maps, is to show that no energy is lost in the neck regions joining the bubbles. Hence, to prove genus bounds, we must also show:
\begin{enumerate}[resume, itemsep=5pt, topsep=5pt]
\item No topology is lost in the intermediate scales. The key to this is a scale-breaking Morse theoretic argument (cf.\ Lemma \ref{lemm:ann-decomp}), which allows us to show that the intermediate regions are topologically simple (i.e., planar domains). The key geometric input for this argument is the fact that the curvature is sufficiently small in a scale invariant sense in the intermediate region, a fact related to the half-space theorem for complete properly embedded minimal surfaces in $\RR^{3}$. 
\end{enumerate}

This scale-breaking analysis of the intermediate regions forms the technical heart of our work. In ambient manifolds $(M^{n},g)$ with $4\leq n\leq 7$, a similar argument works given appropriate modifications. Moreover, for $n=3$, we show that uniform area bounds are \emph{unnecessary} for understanding the local picture of degeneration. In fact, we will be able to use the above argument to prove that index bounds imply area bounds (in addition to genus bounds) in $3$-manifolds of positive scalar curvature in Theorem \ref{theo:area-genus-bd-PSC}.

\subsection{Applications of the local picture of degeneration}

Thanks to our understanding of how embedded hypersurfaces with uniformly bounded index (and volume) can degenerate, we can prove various results along the lines of the general principle that (when the ambient dimension satisfies $3\leq n\leq 7$) ``embedded minimal hypersurfaces with uniformly bounded index behave qualitatively like embedded stable minimal hypersurfaces.'' We now discuss several results along these lines. 

\subsubsection{Finitely many diffeomorphism types}  An easy application of curvature estimates for stable minimal hypersurfaces shows that for a closed Riemannian manifold $(M^{n},g)$, $3\leq n\leq 7$, there can be at most $N = N(M,g,\Lambda)$ distinct diffeomorphism types in the set of stable embedded minimal hypersurfaces with $\vol_{g}(\Sigma)\leq\Lambda$. To see this, suppose that $\Sigma_{j}$ is an infinite sequence of pairwise non-diffeomorphic embedded stable minimal hypersurfaces. Using the curvature estimates\footnote{Note that the works \cite{SSY,Schoen-Simon:1981} only explicitly consider (embedded) two-sided stable hypersurfaces. However, using the fact that a properly embedded hypersurface in Euclidean space is two-sided, we can extend the curvature estimates to the one-sided case as well; see the proof of Lemma \ref{lemm:curv-est-nD} as well as Lemma \ref{lem:two-sided-small-balls}.} established in \cite{Sch83,Ros:oneSided,SSY,Schoen-Simon:1981} we see that if $\Sigma_{j}$ is a sequence of embedded\footnote{Strictly speaking, embeddedness is not needed for this result in dimension $n=3$. Note that it will be essential elsewhere in our work even when $n=3$.} stable minimal hypersurfaces, then there is $C>0$ so that $|\sff_{\Sigma_{j}}|(x) \leq C$ for all $x \in \Sigma_{j}$. Because $\vol_{g}(\Sigma_{j})\leq \Lambda$, by passing to a subsequence we may find $\Sigma_{\infty}$ so that $\Sigma_{j}$ converges locally smoothly to $\Sigma_{\infty}$ with finite multiplicity. Thus, for $j$ sufficiently large we may construct a smooth covering map $\Sigma_{j} \to \Sigma_{\infty}$ with a uniformly bounded number of sheets. Because the $\Sigma_{j}$ are assumed to be non-diffeomorphic, this easily yields a contradiction.\footnote{This follows from the fact that there are only finitely many different $k$-sheeted covers of a given compact manifold $\Sigma_{\infty}$. This can be proven by topological considerations, or as pointed out to us by the referee, this is an immediate consequence of a theorem of Hall (cf.\ \cite[Theorem 21.4]{Bogopolski}) which says that the number of subgroups of finite index $k$ of a finitely generated group is finite. Note that here we are implicitly using the fact that the smooth structure on the base and the topological data of the covering map uniquely determine the smooth structure of the cover (because the covering map is smooth).} 

To try to extend this proof to the case of uniformly bounded index, we must contend with the possibility that the hypersurfaces have diverging curvature. Using our local picture of degeneration, we can deal with this possibility and show the following finiteness result. 
\begin{theo}\label{theo:fin-top-type-Mn}
Fix $(M^{n},g)$ a closed Riemannian manifold, where $3\leq n\leq 7$. Then there can be at most $N = N(M,g,\Lambda,I)$ distinct diffeomorphism types in the set of embedded minimal hypersurfaces $\Sigma\subset (M,g)$ with $\Index(\Sigma) \leq I$ and $\vol_{g}(\Sigma) \leq\Lambda$.

In particular, for a closed three-manifold $(M^{3},g)$, there is $r_{0} = r_{0}(M,g,\Lambda,I)$ so that any embedded minimal surface $\Sigma$ in $(M^{3},g)$ with $\Index(\Sigma) \leq I$ and $\area_{g}(\Sigma)\leq \Lambda$ has $\genus(\Sigma)\leq r_{0}$. 
\end{theo}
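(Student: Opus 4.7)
The plan is to argue by contradiction: suppose there exist pairwise non-diffeomorphic embedded minimal hypersurfaces $\Sigma_{j}\subset(M,g)$ with $\Index(\Sigma_{j})\leq I$ and $\vol_{g}(\Sigma_{j})\leq\Lambda$. My aim is to show that for $j$ large each $\Sigma_{j}$ is built, up to diffeomorphism, by assembling finitely many pieces drawn from a list determined solely by $(M,g,\Lambda,I)$, which contradicts pairwise non-diffeomorphism. The strategy mirrors the finite-diffeomorphism-type argument for stable hypersurfaces recalled in the introduction, with the extra work localized at the at most $I$ points of curvature concentration.

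First I would apply the extension of Schoen's curvature estimates to the bounded-index case, Corollary~\ref{coro-curv-bds}, to obtain a subset $\{p_{1},\dots,p_{k}\}\subset M$ with $k\leq I$ such that, after passing to a subsequence, $|\sff_{\Sigma_{j}}|$ is locally uniformly bounded on $M\setminus\{p_{1},\dots,p_{k}\}$. Combined with the volume bound, standard compactness yields a smooth embedded minimal hypersurface $\Sigma_{\infty}\subset M\setminus\{p_{1},\dots,p_{k}\}$ and locally smooth convergence of $\Sigma_{j}$ to $\Sigma_{\infty}$ with a finite integer multiplicity $m$ controlled by $\Lambda$; a removable-singularity argument using the volume bound extends $\Sigma_{\infty}$ smoothly across the $p_{i}$ to a closed minimal hypersurface of $M$ whose diffeomorphism type is fixed along the subsequence.

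Next, at each concentration point I would rescale at the curvature scale to extract a complete, non-flat, properly embedded minimal hypersurface $\widetilde{\Sigma}_{i}\subset\RR^{n}$ of index at most $I$. When $n=3$, \cite{Ros:oneSided, ChMa14} bound $\genus(\widetilde{\Sigma}_{i})$ linearly in $I$; for $4\leq n\leq 7$, the volume bound on $\Sigma_{j}$ passes through to a bounded density-at-infinity for $\widetilde{\Sigma}_{i}$, which again confines $\widetilde{\Sigma}_{i}$ to finitely many diffeomorphism types via the known classification/estimates. If the first blow-up fails to capture all of the index, I would iterate the rescaling at successively finer scales, producing a tree of at most $I$ bubbles at each $p_{i}$ with combinatorics bounded by $I$.

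The technical heart, and the main obstacle, is showing that no topology is lost in the intermediate annular regions connecting the bubbles to $\Sigma_{\infty}$. This is precisely the content of Lemma~\ref{lemm:ann-decomp}: the scale-invariant smallness of the curvature in these necks, combined with a scale-breaking Morse-theoretic argument powered by the half-space theorem, forces each neck to be a planar domain. Granted this, for $j$ large the diffeomorphism type of $\Sigma_{j}$ is reconstructed from (i) an $m$-sheeted cover of the fixed closed limit $\Sigma_{\infty}$, (ii) the finitely many bubbles $\widetilde{\Sigma}_{i}$ with their ends truncated, and (iii) a gluing pattern along planar necks with combinatorics bounded by $I$; since each ingredient ranges over a finite set determined by $(M,g,\Lambda,I)$, only finitely many diffeomorphism types can occur, giving the desired contradiction. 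The $3$-manifold statement then follows immediately: for $n=3$, the diffeomorphism type of a closed surface (with fixed orientability) is determined by its genus, so a finite list of diffeomorphism types yields the stated uniform genus bound.
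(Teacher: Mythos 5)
Your overall framework---curvature bounds away from at most $I$ points via Corollary~\ref{coro-curv-bds}, blow-up at the concentration points, removable-singularity extension of the limit, and the annular-decomposition Lemma~\ref{lemm:ann-decomp} to rule out lost topology in the intermediate regions---is the right skeleton, and for $n=3$ it follows the route the paper hints at in a footnote (using the local degeneration picture directly rather than what the paper actually does, which is to invoke the surgery statement Corollary~\ref{coro:snip} and derive a contradiction from $\genus(\widetilde\Sigma_j)\to\infty$). The 3D and higher-dimensional cases are, however, treated in the paper by genuinely different arguments, and your proposal as written does not distinguish them.

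The concrete gap is your appeal to the half-space theorem to establish the small scale-invariant curvature on the necks. This is legitimate when $n=3$, and it is indeed what powers the proofs of Propositions~\ref{prop:one-point-conc} and~\ref{prop:mult-point-conc} (via Corollary~\ref{coro:limit-lam-struct}). But the half-space theorem \emph{fails} when $4\le n\le 7$ (a catenoid in $\RR^n$, $n\ge4$, lies between two parallel hyperplanes), so the step where you conclude that the rescaled limits are planes, or that $\Sigma_j''$ separates from $\Sigma_j'$, does not go through. The paper resolves this by replacing the hypothesis $(\aleph)$ with $(\beth)$, adding a \emph{connectedness} assumption and arguing from large scales to small scales: the scale $\delta_j$ is chosen as the first (largest) scale at which the curvature bound fails, the resulting blow-up limit is shown to be connected using Lemma~\ref{lemm:ann-decomp} applied \emph{on the annulus outside} $B_{\gamma\delta_j}$, and only then is it forced to be a multiplicity-$\ge2$ plane. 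Without this ``big-to-small'' mechanism your argument has no way to rule out a non-flat component of the intermediate blow-up limit sitting in a half-space and spoiling the neck estimate.

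A second, smaller issue: you attribute the neck analysis entirely to Lemma~\ref{lemm:ann-decomp}, but that lemma is purely a Morse-theoretic statement that \emph{assumes} the bound $|\sff_\Sigma|\,d_g(\cdot,p)\le\tfrac14$ on the intermediate region. The technical heart of the paper is the verification of this hypothesis (equations~\eqref{eq:one-pt-curv-14bds}, \eqref{eq:multi-pt-curv-14bds}, \eqref{eq:curv-14-nD-1pt}, \eqref{eq:curv-14-nD-mult-pt}), which requires the inductive, multi-scale contradiction argument and--- in 3D---Lemma~\ref{lemm:lam-lim-planes}. Your ``tree of bubbles'' remark is in the right direction but leaves the induction and the removable-singularity input (Propositions~\ref{prop:remov-sing-two-sided-stab-lam} and~\ref{prop:high-dim-remov-sing}) entirely unaddressed, and these are not trivial.
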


In a related direction, we can partially extend Ros's bounds \cite[Theorem 17]{Ros:oneSided} (see also \cite{ChMa14}) to higher dimensions as follows. 

\begin{theo}\label{theo:fin-top-type-Rn}
For $4\leq n \leq 7$, there is $N=N(n,I,\Lambda) \in \NN$ so that there are at most $N$ mutually non-diffeomorphic complete embedded minimal hypersurfaces $\Sigma^{n-1}\subset \RR^{n}$ with $\Index(\Sigma) \leq I$ and $\vol(\Sigma\cap B_{R}(0)) \leq \Lambda R^{n-1}$ for all $R>0$. 
\end{theo}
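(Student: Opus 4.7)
The plan is to mimic the proof of Theorem~\ref{theo:fin-top-type-Mn} while using the Euclidean volume growth hypothesis $\vol(\Sigma \cap B_R(0)) \leq \Lambda R^{n-1}$ as a scale-invariant substitute for compactness plus a volume bound. Suppose for contradiction that a sequence $\Sigma_j \subset \RR^n$ of pairwise non-diffeomorphic complete embedded minimal hypersurfaces exists with $\Index(\Sigma_j) \leq I$ and the stated volume growth bound. The goal is to prove that, along a subsequence, all sufficiently large $j$ give mutually diffeomorphic $\Sigma_j$.

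First I would apply the local picture of degeneration inside each fixed ball $B_R(0)$. The Euclidean volume growth assumption supplies the uniform volume bound needed on every such ball, and the one-sided $n$-dimensional curvature estimates (as in Lemma~\ref{lemm:curv-est-nD}) combined with the bounded-index curvature bounds (Corollary~\ref{coro-curv-bds}) give curvature control away from a finite set. A diagonal extraction over $R \to \infty$ yields smooth subsequential convergence $\Sigma_j \to \Sigma_\infty$ on $\RR^n \setminus \{p_1,\ldots,p_k\}$ with $k \leq I$, where $\Sigma_\infty$ is itself a complete embedded minimal hypersurface with $\Index(\Sigma_\infty) \leq I$ and the same volume growth bound.

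Next, around each concentration point $p_i$ I would rescale at the natural curvature scale to extract a nontrivial blow-up limit, which remains a complete embedded minimal hypersurface in $\RR^n$ with bounded index and the Euclidean volume growth bound (both invariant under translation and dilation). Because strict index drop occurs at each rescaling, iterating produces a finite bubble tree, each node of which is a complete embedded minimal hypersurface in $\RR^n$ whose curvature is bounded away from its own (strictly fewer) concentration points. Using the Schoen--Simon--Yau/Schoen--Simon estimates together with the volume growth bound, each node has bounded topology and hence lies in one of finitely many diffeomorphism types depending only on $n$, $I$, and $\Lambda$; this is the higher-dimensional substitute for Ros's argument.

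The final ingredient is the scale-breaking Morse-theoretic analysis (Lemma~\ref{lemm:ann-decomp} in its $n$-dimensional version), which I would apply to every pair of consecutive scales in the bubble tree to show that the intermediate annular regions are topologically simple, with topology controlled in terms of $n$, $I$, and $\Lambda$. Combining these pieces, for $j$ sufficiently large the diffeomorphism type of $\Sigma_j$ is determined by the finite combinatorial data of the bubble tree together with the diffeomorphism types of finitely many nodes and necks, each drawn from a finite list. Thus $\Sigma_j$ lies in one of only $N = N(n,I,\Lambda)$ diffeomorphism types, contradicting the non-diffeomorphism assumption. The main obstacle I anticipate is the intermediate-scale neck analysis: proving topological simplicity of necks in dimensions $4 \leq n \leq 7$ requires a suitable higher-dimensional replacement for the three-dimensional half-space theorem input used in the scale-breaking argument, and is where the bulk of the work should lie.
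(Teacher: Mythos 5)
Your overall strategy — blow up at finitely many points, control the nodes, and glue via the annular decomposition of Lemma~\ref{lemm:ann-decomp} — matches the paper's, and you correctly identify where the real difficulty lies: establishing the scale-invariant curvature estimate $|\sff_{\Sigma_j}|(x)\,d(x,p)<\frac14$ on the intermediate regions, which is the hypothesis needed to invoke Lemma~\ref{lemm:ann-decomp}. (The lemma itself is already stated for arbitrary $n$, so no ``higher-dimensional version'' is needed; the gap is entirely in supplying its curvature hypothesis.) But you flag this as an open obstacle and do not resolve it, which is precisely the step where the $n\geq 4$ proof diverges from $n=3$ and where the essential new idea enters. Without it your bubble-tree picture does not close: you cannot conclude that the intermediate annuli are products $\SS^{n-2}\times[0,1]$, and hence you cannot reduce the diffeomorphism type to finite data.

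The paper fills this gap with two ingredients that your proposal lacks. First, the hypersurfaces may be assumed \emph{connected}: the Euclidean volume growth and the monotonicity formula bound the number of components, so it costs nothing. This connectedness assumption is built into hypothesis $(\beth)$ and is exactly what compensates for the failure of the half-space theorem in $\RR^n$, $n\geq 4$. Second, the curvature estimate is proved ``big to small'': one takes $\delta_j$ to be the \emph{smallest} radius $\geq R/\lambda_j$ at which the estimate already holds on $B_2(0)\setminus B_{\delta_j}(p)$. Lemma~\ref{lemm:ann-decomp} then applies on the region where the estimate is in force, showing it consists of annuli; since the original $\Sigma_j$ is connected, this forces $\Sigma_j\cap B_{\gamma\delta_j}(p)$ to be connected, hence the blow-up limit at scale $\delta_j$ is a \emph{connected} complete embedded minimal hypersurface with Euclidean volume growth. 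The convergence to this limit is not smooth (by the choice $\delta_j\geq R/\lambda_j$), so it occurs with multiplicity $\geq 2$, forcing stability and therefore — by the Simons/Schoen--Simon--Yau/Schoen--Simon Bernstein result for connected stable hypersurfaces of Euclidean volume growth — flatness. Flatness contradicts the definition of $\delta_j$ as the smallest such radius. Your proposal, which tacitly blows up ``small to big'' and has no access to connectedness of the rescaled limit, cannot rule out a non-flat, possibly disconnected limit sitting between two parallel planes, which is exactly the pathology the failure of the half-space theorem permits. Finally, a minor point: the reduction to a compact picture is done in the paper not by a diagonal argument over $R\to\infty$, but by exploiting that finite-index, Euclidean-growth hypersurfaces are regular at infinity, so after a rescaling and rotation all the topology is captured in $B_1(0)$; this is what lets one phrase the induction cleanly at the unit scale and also citing Corollary~\ref{coro-curv-bds} (which is for $n=3$) in place of Lemma~\ref{lemm:curv-est-nD} is a misattribution.
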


It would be interesting to understand how $N$ depends on $I$ and $\Lambda$.\footnote{Added in proof: recently, Li has shown that for $n=4$, the $N$ in Theorem \ref{theo:fin-top-type-Rn} can be bounded independently of the area growth bound ($\Lambda < \infty$) \cite{Li:index-high-dim}.}

\subsubsection{Three-dimensional results} A well known compactness result for minimal surfaces in a fixed three-manifold is due to Choi--Schoen \cite{ChSc85} who showed that for any sequence of minimal surfaces with bounded genus and area there is a subsequence converging to a smooth minimal surface (possibly with multiplicity).  The convergence is moreover smooth away from finitely many points where curvature is concentrating. 

This result has several important manifestations; for example, in a hyperbolic manifold, a genus bound for a minimal surface already implies an area bound by the Gauss equation and the Gauss-Bonnet formula. Moreover, in a three-manifold with positive Ricci curvature, the area of an embedded minimal surface can be bounded above in terms of its genus, by work of Choi--Wang \cite{ChoiWang} and Yang--Yau \cite{YangYau}. This bound and the non-existence of two-sided stable minimal hypersurfaces with ambient positive Ricci curvature, shows that the Choi--Schoen compactness implies that the set of closed, embedded minimal surfaces with fixed genus in a three-manifold of positive Ricci curvature is compact in the smooth topology. In a general Riemannian three-manifolds, on the other hand, it is no longer possible to bound the area nor the index of an embedded minimal surface by the genus, even if one assumes positive scalar curvature, as it can be seen in examples constructed by Colding--De Lellis \cite{CD}. 

However, we are able to show that in three-manifolds with positive scalar curvature, uniform index bounds do imply uniform area and genus bounds. This indicates that index bounds are not only very natural from the variational point of view, but they actually yield more control on the minimal surface than genus bounds. That such a result should hold follows again from our general principle that because this holds for embedded stable minimal surfaces, it should hold for an embedded minimal surface with bounded index. The corresponding result in the case of stable surfaces is a consequence of the fact that by work of Fischer-Colbrie--Schoen \cite{Fischer-Colbrie-Schoen} and Schoen--Yau \cite{ScYa83}, two-sided stable minimal surfaces in ambient manifolds with positive scalar curvature are $\SS^{2}$, along with a geometric compactness argument based on the fact that $\SS^{2}$ is simply connected.   
\begin{theo}\label{theo:area-genus-bd-PSC}
Suppose that $(M^{3},g)$ is a closed three-manifold with positive scalar curvature. For $I \in \NN$, there is $A_{0}=A_{0}(M,g,I)<\infty$ and $r_{0}=r_{0}(M,g,I)$ so that if $\Sigma\subset (M,g)$ is a connected, closed, embedded minimal surface with $\Index(\Sigma)\leq I$, then $\area_{g}(\Sigma)\leq A_{0}$ and $\genus(\Sigma)\leq r_{0}$. 
\end{theo}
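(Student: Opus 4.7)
I would argue by contradiction: suppose a sequence $\Sigma_j$ of connected, closed, embedded minimal surfaces in $(M^3,g)$ with $\Index(\Sigma_j)\leq I$ has either $\area_g(\Sigma_j)\to\infty$ or $\genus(\Sigma_j)\to\infty$. First I would invoke the local picture of degeneration established earlier in the paper (which, for $n=3$, does not require an a priori area bound) to extract at most $I$ points $p_1,\dots,p_k\in M$ at which $|\sff_{\Sigma_j}|$ concentrates; away from these points the second fundamental forms are uniformly bounded, so, after a subsequence, $\Sigma_j$ converges smoothly with some integer multiplicity $m_j$ on compact subsets of $M\setminus\{p_1,\dots,p_k\}$ to a smooth minimal lamination whose leaves are stable. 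Rescaling at the curvature scale near each $p_i$ produces a complete, non-flat, properly embedded minimal surface $\widetilde\Sigma_\infty^i\subset\RR^3$ with $\Index(\widetilde\Sigma_\infty^i)\leq I$; by \cite{Ros:oneSided,ChMa14}, each such surface has $\genus(\widetilde\Sigma_\infty^i)\leq C(I)$ and boundedly many ends.

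Positive scalar curvature now severely restricts the limiting geometry. By Fischer-Colbrie--Schoen and Schoen--Yau, the stable leaves of the thick-part lamination are two-sided $\SS^2$'s (or one-sided $\RR P^2$'s, whose orientation double cover is an $\SS^2$); testing the stability inequality with the constant function and using the Gauss equation yields a uniform bound $\area_g(\text{leaf})\leq 8\pi/\min_M R_g$. Since every leaf has genus zero, every bubble $\widetilde\Sigma_\infty^i$ has genus bounded by $C(I)$, and the intermediate neck regions are topologically planar by the scale-breaking Morse-theoretic decomposition (Lemma \ref{lemm:ann-decomp}), assembling the pieces yields a uniform bound $\genus(\Sigma_j)\leq C(M,g,I)$. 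This disposes of the topological half of the theorem.

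The area bound is more delicate. Each piece in the decomposition contributes bounded area at the local scale: the thick-part leaves by the PSC estimate just described, the bubbles by a Ros-type area-growth control for finite-index minimal surfaces in $\RR^3$, and the necks by their planar structure together with the monotonicity formula. Consequently, the only way $\area_g(\Sigma_j)$ could diverge is through the multiplicity $m_j$ of the thick-part convergence tending to infinity, and the crux is to bound $m_j$ in terms of $I$. For this I would exploit the connectedness of $\Sigma_j$ together with the topological simplicity of the limit leaves: since $\SS^2$ and $\RR P^2$ have simply connected orientation double covers and the necks are planar, the $m_j$ distinct sheets of the thick-part lamination cannot be reglued within $M\setminus\{p_1,\dots,p_k\}$, so they must be joined across the $p_i$'s by the bubbles; a bubble with index at most $I$ has at most $C(I)$ ends and can therefore reconnect at most $C(I)$ sheets, giving $m_j\leq C(M,g,I)$. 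This multiplicity estimate, in which the scale-breaking analysis of the intermediate regions does the real work, is the step I expect to be the main obstacle.
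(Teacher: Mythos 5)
Your proposal has the right key ingredients but the logical structure is backwards relative to the paper, and a couple of the intermediate claims are stronger than what can actually be established, leaving genuine gaps.

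First, the ordering. You try to bound the genus directly by ``assembling the pieces'' and postpone the multiplicity bound to the area part. But the assembly argument already needs a bound on the number $m_j$ of thick-region sheets: if $m_j$ is unbounded you have unboundedly many genus-zero pieces to glue, and without knowing that all but a controlled number of them cap off into closed spheres (which is precisely the multiplicity bound), the Euler-characteristic bookkeeping does not close. The paper avoids this entirely by proving the \emph{area} bound first, and then simply invoking Theorem~\ref{theo:fin-top-type-Mn} (which requires an area hypothesis) to get the genus bound for free.

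Second, the claim ``the stable leaves of the thick-part lamination are $\SS^2$'s or $\RR P^2$'s'' and then ``every leaf has genus zero'' is too strong. Leaves approached with multiplicity one need only have bounded index, not be stable, and there is no reason for such a leaf to be a sphere. The paper sidesteps this by never arguing about the whole lamination: it only isolates the single leaf $L$ around which area concentrates (picking a point $p$ with $\area_g(\Sigma_j\cap B_r(p))\to\infty$ for all $r$), shows $L$ has stable universal cover $\widehat L$, and then applies Schoen--Yau's intrinsic diameter estimate followed by Fischer-Colbrie--Schoen's theorem to conclude $\widehat L\cong\SS^2$. Your stability-inequality area estimate $\area\leq 8\pi/\min_M R_g$ is correct, but it applies only to this $L$, not to arbitrary leaves.

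Third, and this is the genuine obstacle you anticipate: your multiplicity argument is plausible but not rigorous as stated, precisely because near $\cB_\infty$ the $\Sigma_j$ are not graphs over $\widetilde L$, so the ``sheets'' you want to count are not a covering there. The paper resolves this with the surgery in Corollary~\ref{coro:snip}: it replaces $\Sigma_j$ by $\widetilde\Sigma_j$ with uniformly bounded curvature, uniformly bounded $|\pi_0|$ (using connectedness of $\Sigma_j$), and locally smooth convergence to the \emph{compact} leaf $\widetilde L$ everywhere. Only then is a connected component $\widehat\Sigma_j\subset\widetilde\Sigma_j$ a genuine smooth cover of $\widetilde L$ for large $j$, and since the universal cover $\widehat L$ is a sphere, $\widehat\Sigma_j$ is $\SS^2$ or $\RR P^2$ with area close to that of $\widetilde L$---contradicting the assumed divergence. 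Your bubble-ends counting is the right intuition for why surgery disconnects only $\tilde m(I)$ pieces, but without performing the surgery (or some equivalent regularization near $\cB_\infty$) you do not have the global cover structure needed to run the connectedness argument.

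To repair the proposal: prove the area bound first exactly along the lines you sketch but for the single concentration leaf and using Corollary~\ref{coro:snip} to pass to a bounded-curvature, bounded-component sequence; then obtain the genus bound by citing Theorem~\ref{theo:fin-top-type-Mn} with the newly established area bound as input.
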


\begin{rema}
Unfortunately, without any extra assumption, even the space of embedded stable minimal surfaces fails to be compact in general three-manifolds due to the failure of uniform area bounds. We discuss several examples in \S \ref{subsect:counterexamples} below. In the converse direction, Ejiri--Micallef have shown \cite{EjiriMicallef} that for immersed minimal surfaces in a general three-manifold, uniform bounds on their area and genus imply uniform bounds on their index. 
\end{rema}

In a more technical direction, we remark that as a byproduct of the proof of Theorem \ref{theo:neck}, we obtain:
\begin{theo}\label{theo:removsing}
Suppose that $\Sigma_{j}$ is a sequence of embedded minimal surfaces in a three-manifold $(M,g)$ with uniformly bounded index, i.e., $\Index(\Sigma_{j})\leq I$ for some $I \in \NN$. Then, after passing to a subsequence, $\Sigma_{j}$ converges to a lamination $\cL$ away from at most $I$ singular points. The lamination can be extended across these points. 
\end{theo}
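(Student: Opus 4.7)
The plan is to first localize index concentration to finitely many points via Corollary \ref{coro-curv-bds}, then apply standard lamination compactness on the complement of those points, and finally establish the removable singularity property at each isolated point.

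First, I would invoke Corollary \ref{coro-curv-bds} to extract, after passing to a subsequence, a finite set $\{p_1, \ldots, p_k\} \subset M$ with $k \leq I$ such that for every compact $K \Subset M \setminus \{p_1, \ldots, p_k\}$ there is a constant $C(K)$ with
\[
\sup_{\Sigma_j \cap K} |\sff_{\Sigma_j}| \leq C(K).
\]
These are the points where index may concentrate. On the open subset $M' := M \setminus \{p_1, \ldots, p_k\}$ we therefore have a sequence of embedded minimal surfaces with locally uniform bounds on the second fundamental form, so the standard minimal lamination compactness theorem produces, after passing to a further subsequence, a minimal lamination $\cL'$ of $M'$ to which the $\Sigma_j$ converge smoothly (with finite multiplicity on compact sets).

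Second, I would show $\cL'$ extends to a lamination $\cL$ of all of $M$ by proving a scale-invariant curvature bound near each $p_\ell$. For any $q \in M'$ sufficiently close to $p_\ell$, the ball $B_{d(q,p_\ell)/2}(q)$ is compactly contained in $M'$ and (for small enough radius) contains none of the other $p_m$; applying Corollary \ref{coro-curv-bds} on a ball of this size yields
\[
|\sff_{\Sigma_j}|(q) \leq \frac{C}{d(q,p_\ell)}
\]
uniformly in $j$. Passing this bound to the limit, the lamination $\cL'$ enjoys the quadratic curvature decay $|\sff_{\cL'}|(q) \leq C/d(q,p_\ell)$ in a punctured neighborhood of each $p_\ell$.

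Third, I would invoke a removable singularity theorem for minimal laminations with such curvature decay at an isolated singularity (of the type established by Meeks--P\'erez--Ros, and also implicit in Colding--Minicozzi's lamination theory) to extend each leaf of $\cL'$ across the puncture $p_\ell$, producing a minimal lamination $\cL$ on all of $M$. The main obstacle is justifying this final extension: one must ensure that leaves accumulating at $p_\ell$ do so in a controlled fashion compatible with the lamination structure, which is precisely what the scale-invariant curvature bound above guarantees. Once the quadratic decay is in hand, the removable singularity step reduces to a known result in the theory of minimal laminations in three-manifolds.
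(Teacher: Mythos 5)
Your proposal is correct and would yield a valid proof, but it takes precisely the route that the paper explicitly sidesteps. Remark \ref{rema:MPR} acknowledges that the removable singularity can be deduced from Meeks--P\'erez--Ros's local removable singularity theorem \cite{MePeRo13} once one has the scale-invariant bound $|\sff_{\cL}|(x)\, d_g(x,\cB_\infty)\leq C$ (which, incidentally, follows directly from Corollary \ref{coro-curv-bds} upon passing to the limit, so your intermediate ``ball of radius $d(q,p_\ell)/2$'' argument is unnecessary), but the authors deliberately provide a self-contained alternative that avoids appealing to that deep result.

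The paper's route instead proceeds through stability. In Lemma \ref{lemm:lam-lim-planes}, the index bound is used to show that in a small punctured neighborhood of each point of $\cB_\infty$, every leaf of the limit lamination has stable universal cover: leaves where the convergence has higher multiplicity are automatically stable (via \cite[Lemma A.1]{MeRo06}), while only finitely many leaves with multiplicity one can be unstable (since the total index of $\Sigma_j$ is bounded by $I$), and each of these can be made stable by shrinking the neighborhood. This reduces the problem to Proposition \ref{prop:remov-sing-two-sided-stab-lam}, a removable singularity theorem for two-sided stable laminations that the authors prove from scratch using Gulliver--Lawson's Bernstein theorem (Theorem \ref{thm:GL-Bern}) and a Morse-theoretic sheet analysis in the spirit of Lemma \ref{lemm:ann-decomp}. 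The trade-off is clear: your approach is shorter but imports a substantial external theorem whose proof is long and delicate, while the paper's approach is longer but stays within the toolkit it already develops for Theorem \ref{theo:neck} (curvature estimates, stability of limit leaves, Bernstein theorems, Morse theory of the distance function). Since the paper needs that toolkit elsewhere anyway, and since making the argument self-contained is one of the stated contributions (cf.\ the last sentence of Remark \ref{rema:MPR}), the in-house proof is the natural choice for the authors even though yours is logically sound.
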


\begin{rema}\label{rema:MPR}
We note that the fact that the limit lamination $\cL$ has removable singularities can be seen as a consequence of deep work by Meeks--Perez--Ros \cite{MePeRo13}, combined with our curvature estimates for $\Sigma_{j}$, which after passing to the limit, imply\footnote{Alternatively, \cite[(3.1)]{LiZhou} establishes similar curvature estimates for the limit lamination; the exact form of estimates we establish here (before passing to the limit) are crucial for our proof of Theorem \ref{theo:neck} in several other places.}  that $|\sff_{\cL}|(x)d_{g}(x,\cB_{\infty})\leq C$ for $x \in \cL$.  Our proof of Theorem \ref{theo:removsing} however does not rely on the removable singularity results in \cite{MePeRo13}, and thus provides a self-contained proof that the limit lamination of a sequence of embedded closed minimal surfaces with bounded index has removable singularities. 
\end{rema}

\begin{rema}
We also remark that Theorem \ref{theo:neck} and Corollary \ref{coro:snip} provide an alternative approach to a recent result by Colding--Gabai, \cite[Theorem 2.2]{ColdingGabai} (cf.\ Remark \ref{rema:colding-gabai}). We will not reproduce the full statement here, but only note that it loosely says that a degenerating sequence of index-one embedded minimal surfaces will look like a small catenoid connected by large annular regions to the rest of the surface. 
\end{rema}

We remark that as consequence of the Theorems \ref{theo:fin-top-type-Mn} and \ref{theo:area-genus-bd-PSC}, we may easily deduce several compactness results. By a theorem of Colding--Minicozzi \cite{ColdingMinicozzi:no-area-bds}, the set of closed embedded minimal surfaces with uniformly bounded area and genus is finite in $(M^3,g)$, as long as  $g$ is ``bumpy'' in the sense of White \cite{White:bumpy2}, i.e. $g$ has the property that there are no immersed minimal submanifolds with non-zero Jacobi fields.\footnote{It seems to us that in general, the notion of ``bumpy'' from \cite{White:bumpy2}, rather than the notion from \cite{White:bumpy} is necessary to deal with the possibility of a one-sided limit in the proof of \cite{ColdingMinicozzi:no-area-bds}. See also \cite[Remark 3.1]{Carlotto:generic}.} Such metrics are ``generic'' by the main result in \cite{White:bumpy2}. Thus, we have:
\begin{coro}\label{coro:PSC-bumpy}
Suppose that $(M^{3},g)$ is a closed three-manifold with a bumpy metric of positive scalar curvature. For $I \in \NN$, there are only finitely many closed, connected, embedded minimal surfaces $\Sigma$ with $\Index(\Sigma)\leq I$. 
\end{coro}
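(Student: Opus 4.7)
The plan is to combine Theorem \ref{theo:area-genus-bd-PSC} with the Colding--Minicozzi finiteness theorem \cite{ColdingMinicozzi:no-area-bds} essentially as a black box. Suppose for contradiction that there is an infinite sequence $\{\Sigma_j\}$ of pairwise distinct closed, connected, embedded minimal surfaces in $(M^3,g)$ with $\Index(\Sigma_j)\leq I$.

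First, I would apply Theorem \ref{theo:area-genus-bd-PSC}, which requires precisely the hypotheses we have (PSC on $M^3$, uniform index bound, connected embedded minimal surfaces), to produce constants $A_0 = A_0(M,g,I)$ and $r_0 = r_0(M,g,I)$ with
\[
\area_g(\Sigma_j)\leq A_0,\qquad \genus(\Sigma_j)\leq r_0
\]
for every $j$. Thus the sequence has uniformly bounded area and uniformly bounded genus, which is precisely the setting of the Colding--Minicozzi compactness/finiteness theorem.

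Next, I would invoke \cite{ColdingMinicozzi:no-area-bds}: in a closed three-manifold equipped with a bumpy metric (in the sense of \cite{White:bumpy2}), the set of closed embedded minimal surfaces with uniformly bounded area and genus is finite. Applying this to $\{\Sigma_j\}$ contradicts the assumption that the $\Sigma_j$ are pairwise distinct, completing the proof. One small point of care, flagged already in the footnote of the excerpt, is to make sure we are using the stronger ``bumpy'' notion from \cite{White:bumpy2} rather than the one from \cite{White:bumpy}, since a priori the smooth subsequential limits produced by Choi--Schoen type compactness could be one-sided, and ruling out nontrivial Jacobi fields on such a limit requires the more general notion.

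There is essentially no serious obstacle here, since the hard work has been done in proving Theorem \ref{theo:area-genus-bd-PSC}; the only thing to double-check is that the Colding--Minicozzi finiteness theorem applies in the form needed (connected embedded surfaces, potentially one-sided limits, with the bumpy hypothesis of \cite{White:bumpy2}), which the footnote already addresses. In summary, the structure of the argument is: index bound $+$ PSC $\Rightarrow$ area and genus bounds (Theorem \ref{theo:area-genus-bd-PSC}), and area and genus bounds $+$ bumpy metric $\Rightarrow$ finiteness (\cite{ColdingMinicozzi:no-area-bds}).
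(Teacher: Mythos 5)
Your argument is correct and is exactly the paper's own derivation: Theorem \ref{theo:area-genus-bd-PSC} supplies the uniform area and genus bounds, and the Colding--Minicozzi finiteness theorem for bumpy metrics (in the sense of \cite{White:bumpy2}) then yields finiteness. The remark you make about needing the stronger bumpiness notion from \cite{White:bumpy2} to accommodate possibly one-sided limits is also precisely the point the paper flags in its footnote.
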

\begin{rema}
A slightly different version of this corollary has recently been independently obtained by Carlotto \cite{Carlotto:generic}, assuming the ambient positive scalar curvature metric is bumpy in the sense of \cite{White:bumpy} (i.e., there is no embedded---as opposed to immersed---minimal submanifold with a non-zero Jacobi field) but with the additional assumption that $(M^{3},g)$ contains no embedded minimal $\RR P^{2}$.
\end{rema}

 Combining Theorem \ref{theo:area-genus-bd-PSC} with the work of Choi--Schoen \cite{ChSc85}, we also have:
\begin{coro}
Suppose that $(M^{3},g)$ is a closed three-manifold with positive Ricci curvature. Then, for $I \in \NN$, the set of closed, connected, embedded minimal surfaces $\Sigma$ with $\Index(\Sigma) \leq I$ is compact in the smooth topology. 
\end{coro}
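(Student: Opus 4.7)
The plan is to combine Theorem~\ref{theo:area-genus-bd-PSC} with the classical compactness theorem of Choi--Schoen \cite{ChSc85} and the nonexistence of closed two-sided stable minimal surfaces in positive Ricci curvature.

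Since positive Ricci implies positive scalar curvature, Theorem~\ref{theo:area-genus-bd-PSC} furnishes constants $A_0 = A_0(M,g,I)$ and $r_0 = r_0(M,g,I)$ such that every closed, connected, embedded minimal surface $\Sigma \subset (M,g)$ with $\Index(\Sigma) \leq I$ satisfies $\area_g(\Sigma) \leq A_0$ and $\genus(\Sigma) \leq r_0$. Hence any sequence $\{\Sigma_j\}$ of such surfaces has uniformly bounded area and genus, and the Choi--Schoen compactness theorem produces, after passing to a subsequence, a closed embedded minimal limit $\Sigma_\infty \subset (M,g)$ with $\Sigma_j \to \Sigma_\infty$ smoothly (possibly with multiplicity) away from at most finitely many points of curvature concentration.

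To upgrade this to smooth convergence of multiplicity one, I would rule out each potential degeneration using positive Ricci curvature. If the convergence has multiplicity $m \geq 2$, the standard analysis of graphical sheets (applied directly when $\Sigma_\infty$ is two-sided, or after lifting to the orientable double cover when $\Sigma_\infty$ is one-sided) produces a nontrivial non-negative Jacobi function, forcing $\Sigma_\infty$ (resp.\ its orientable double cover) to be a closed two-sided stable minimal surface in $(M,g)$. This contradicts the theorems of Fischer-Colbrie--Schoen \cite{Fischer-Colbrie-Schoen} and Schoen--Yau \cite{ScYa83}. A parallel blow-up argument excludes points of curvature concentration in the multiplicity-one regime: rescaling around such a point would produce a nonflat complete embedded minimal surface in $\RR^3$ whose presence, combined with the bounded genus and the stability obstruction, is again incompatible with the ambient positive Ricci hypothesis. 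The main obstacle is the bookkeeping in the one-sided case, which requires passing to the orientable double cover before invoking the Jacobi-field argument and carefully matching sheet multiplicities; once this is in place, $\Sigma_j \to \Sigma_\infty$ smoothly everywhere, yielding the desired smooth compactness.
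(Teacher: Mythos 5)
Your overall strategy is the same as the paper's: Theorem~\ref{theo:area-genus-bd-PSC} converts the index bound into uniform area and genus bounds (since $\mathrm{Ric}>0$ implies positive scalar curvature), after which one appeals to Choi--Schoen \cite{ChSc85}. The paper leaves it there, because the Choi--Schoen theorem in its original form (stated for ambient positive Ricci and bounded genus) already delivers smooth compactness with multiplicity one; your proposal instead applies the weaker version stated earlier in the paper (multiplicity and concentration allowed) and re-derives the upgrade.

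Two points in that upgrade are imprecise. First, the nonexistence of closed two-sided stable minimal surfaces in positive Ricci is not what Fischer-Colbrie--Schoen \cite{Fischer-Colbrie-Schoen} proves (their hypothesis is positive \emph{scalar} curvature, and their conclusion is a classification, not nonexistence); in positive Ricci the nonexistence follows immediately from the stability inequality with test function $\varphi\equiv 1$, since $\int_\Sigma \bigl(|\sff_\Sigma|^2 + \Ric(N,N)\bigr) > 0$. This is harmless but worth getting right. Second, your stated reason for ruling out concentration points ``in the multiplicity-one regime''---that the blow-up in $\RR^3$ is ``incompatible with the ambient positive Ricci hypothesis''---does not make sense as written, since the blow-up limit lives in flat $\RR^3$ where ambient curvature plays no role. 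The correct reasoning is different: once multiplicity one is established (via the stability obstruction applied to $\Sigma_\infty$ or its orientable double cover, as you do correctly), the density of the limit varifold at any putative concentration point equals $1$, so Allard's regularity theorem gives smooth convergence there; equivalently, a nonflat finite-total-curvature blow-up would have at least two ends (half-space theorem / \cite{Schoen:2ends}), forcing multiplicity at least two nearby, contradicting what was just established. With that correction the argument is complete.
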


In particular, combined with the recent work of Marques--Neves \cite{MaNe13} we obtain
\begin{coro}
Suppose that $(M^{3},g)$ is a closed three-manifold with a bumpy metric of strictly positive Ricci curvature. Then, there exists a sequence of closed, embedded minimal surfaces $\Sigma_{j}$ with $\Index(\Sigma_{j})\to\infty$. 
\end{coro}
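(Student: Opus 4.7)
The plan is to argue by contradiction. Suppose every closed, connected, embedded minimal surface $\Sigma \subset (M^3, g)$ satisfies $\Index(\Sigma) \le I$ for some $I \in \NN$, and let $\cF_I$ denote the set of such surfaces. I will show that $\cF_I$ must be finite, contradicting the existence---furnished by the min-max theory of Marques--Neves \cite{MaNe13}---of infinitely many pairwise distinct closed embedded minimal surfaces in a closed positive-Ricci three-manifold.

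First, by the preceding corollary, $\cF_I$ is sequentially compact in the smooth topology. Second, the bumpy hypothesis forces $\cF_I$ to be discrete. Given $\Sigma_\infty \in \cF_I$, I would view the minimal surface equation for (small) normal graphs over $\Sigma_\infty$ as a quasilinear elliptic equation whose linearization at the zero section is the Jacobi operator; the bumpy hypothesis guarantees this operator has trivial kernel, so the implicit function theorem produces a neighborhood of $\Sigma_\infty$ (in the smooth topology on hypersurfaces) containing no other embedded minimal surface. For a one-sided $\Sigma_\infty$ I would pass to the oriented double cover and use $\ZZ/2$-equivariant perturbations; this is legitimate because the bumpy condition in the sense of \cite{White:bumpy2} applies to all immersed minimal submanifolds. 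Combining sequential compactness with discreteness then forces $\cF_I$ to be finite.

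To close the argument, I would invoke Marques--Neves \cite{MaNe13}: their $k$-parameter min-max widths $\omega_k(M, g)$ tend to infinity, each is realized by a smooth, embedded, closed minimal hypersurface (positive Ricci rules out closed two-sided stable minimal surfaces, precluding the usual regularity obstructions for the min-max limit), and a Lusternik--Schnirelmann argument in the bumpy setting produces infinitely many pairwise distinct such surfaces. Under the contradiction hypothesis they all lie in $\cF_I$, which contradicts the finiteness established above.

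The step I expect to be the main obstacle is controlling the possibility of higher-multiplicity convergence used to establish discreteness. If a sequence $\Sigma_j \to \Sigma_\infty$ drawn from $\cF_I$ converges smoothly with multiplicity $m \ge 2$, the implicit function theorem does not apply directly to conclude $\Sigma_j = \Sigma_\infty$ for large $j$. For a two-sided $\Sigma_\infty$, comparing adjacent sheets of $\Sigma_j$ (written as normal graphs over $\Sigma_\infty$) and rescaling would produce a positive solution of the Jacobi equation, forcing $\Sigma_\infty$ to be stable---which is impossible in positive Ricci curvature. For a one-sided $\Sigma_\infty$ I would lift to the oriented double cover and run the same rescaling argument equivariantly, producing a nonzero Jacobi field on $\Sigma_\infty$ and again contradicting the bumpy assumption.
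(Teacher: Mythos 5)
Your proposal is correct, but it takes a genuinely different route from the one the paper has in mind. The paper obtains this corollary by chaining together Corollary \ref{coro:PSC-bumpy} (bumpy plus positive scalar curvature implies only finitely many closed embedded minimal surfaces with $\Index \le I$, which in turn rests on Theorem \ref{theo:area-genus-bd-PSC} together with the Colding--Minicozzi finiteness theorem \cite{ColdingMinicozzi:no-area-bds}) with the Marques--Neves existence theorem \cite{MaNe13}: if all minimal surfaces had index at most $I$, there would be only finitely many, contradicting the infinitude. You instead start from the compactness corollary (the one immediately preceding) and re-derive the finiteness directly, via the implicit function theorem under the bumpy hypothesis plus a multiplicity analysis to handle the case where the limit is attained with multiplicity $m \geq 2$. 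This is essentially re-proving the special case of Colding--Minicozzi's theorem that is actually needed here, which is considerably easier than their full result precisely because in your setting smooth subsequential convergence is already supplied by the compactness corollary (whereas Colding--Minicozzi must cope with curvature blow-up from area and genus bounds alone). Both routes work. Two small corrections of emphasis: (i) for the multiplicity $\geq 2$ case with a one-sided limit, the renormalized-difference argument on the orientable double cover produces a \emph{positive} Jacobi field, so you can conclude directly from positive Ricci curvature (the double cover would be weakly stable, which is impossible) without needing to invoke bumpiness there at all; (ii) the Marques--Neves theorem produces infinitely many minimal hypersurfaces for \emph{every} positive-Ricci metric---bumpiness is not a hypothesis of their theorem, so the phrase ``Lusternik--Schnirelmann argument in the bumpy setting'' is slightly misleading, though harmless since you cite the correct result.
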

\begin{rema}These last two corollaries have been recently proven by Li--Zhou \cite{LiZhou} by somewhat different arguments.
\end{rema}

\subsection{Counterexamples}\label{subsect:counterexamples}
Several examples show that the set of closed, embedded, stable minimal surfaces can fail to be compact, even if the metric is bumpy and even if we restrict only to the set of such surfaces with a fixed genus. The examples below show that the hypothesis in the applications discussed above cannot be significantly weakened. 

\begin{exam}
The simplest example of non-compactness occurs in the square three-torus $\TT^3=\RR^{3}/\ZZ^{3}$, equipped with the flat metric, as seen by choosing positive rational numbers $\theta_{k} \in \QQ$ converging to an irrational number $\theta_{\infty} \in \RR\setminus \QQ$. Letting $\gamma_{k}$ denote the simple closed geodesic in the two-torus $\TT^2=\RR^{2}/\ZZ^{2}$ with slope $\theta_{k}$, it is easy to see that $\Sigma_{k} : = \gamma_{k}\times \SS^{1}$ is an embedded stable minimal surface with $\area_{g}(\Sigma_{k})\to \infty$. Note that the surfaces $\Sigma_{k}$ limit to the lamination of $\TT^3$ by a single plane $\gamma_{\infty}\times \SS^{1}$ (where $\gamma_{\infty}$ is the non-closed geodesic with slope $\theta_{\infty}$). Of course, the flat metric on $\TT^{3}$ is manifestly not bumpy, but it is relatively easy to see that for an arbitrary metric on $\TT^{3}$, we can minimize (by \cite{ScYa79}) the $g$-area of immersions homotopic to the embedding of $\Sigma_{k}$ into $\TT^{3}$ and then argue (using \cite{BrWo69,FHS83} and fundamental group considerations) that this yields a sequence of embedded, stable, minimal tori in $(\TT^{3},g)$ with unbounded area. 
\end{exam}

One might hope that the torus $\TT^{3}$ is somehow special in the previous example. However, the following results show that for \emph{any} closed three-manifold, it is not possible to use bumpiness (or more generally, any ``generic'' property which is satisfied by a $C^{2}$-dense set of metrics) to prove area bounds for embedded stable minimal surfaces (even assuming fixed genus). 
\begin{exam}\label{exam:twist-tori}
Fix a Riemannian three-manifold $M$. Work of Colding--Minicozzi \cite{ColdingMinicozzi:no-area-bds} shows that there is a $C^{2}$-open set of metrics $g$ on $M$ so that there is a sequence of embedded stable minimal tori $\Sigma_{j}$ with $\area_{g}(\Sigma_{j})\to\infty$. Indeed, for any domain in $M$ of the form $\Omega = S \times \SS^{1}$ where $S$ is a disk with three holes removed (i.e., $\Omega$ is a solid torus with three holes removed, which obviously exists in any coordinate chart), they show that if $\Omega$ has strictly mean convex boundary with respect to a metric $g$, then there exists such a sequence of tori in $\Omega\subset (M,g)$. Their construction relies on an idea of ``looping'' tori around the holes; see \cite[Figure 3.2.3]{Kra09} for a nice illustration. These examples were subsequently extended by Dean \cite{Dea03} and Kramer \cite{Kra09} to give examples of sequences of embedded, stable, minimal surfaces with unbounded area, with any fixed genus. 
\end{exam}

\begin{exam}
An even more extreme example similar to Example \ref{exam:twist-tori} but with a more complicated looping scheme was given by Colding--Hingston \cite{CoHi06}, who in a $C^{2}$-open set of metrics on any three-manifold, construct a sequence of stable tori with unbounded area whose limit lamination has surprising behavior.
\end{exam}

In a more topological vein, we have the following examples of embedded minimal surfaces with bounded index (in fact stable) but unbounded genus (and hence area). 
\begin{exam}
For $\Sigma_{r}$ the closed oriented surface of genus $r>1$, Jaco proved \cite{Jac70} that $M_{r}:=\Sigma_{r}\times \SS^{1}$ admits a sequence of incompressible surfaces with unbounded genus. For any Riemannian metric $g$ on $M_{r}$, we may minimize area using \cite{ScYa79} and see that the resulting stable minimal surface is embedded (after passing to a one-sided quotient, if necessary) by \cite{FHS83}. It is clear that these minimal surfaces must have unbounded genus. 
\end{exam}

\subsection{Precise statement of degeneration and surgery results in $3$-dimensions}
We now state our main results in three-dimensions. 
\begin{theo}[Local picture of degeneration] \label{theo:neck}
There are functions $m(I)$ and $r(I)$ with the following property. Fix a closed three-manifold $(M^{3},g)$ and a natural number $I \in \NN$. Then, if $\Sigma_{j}\subset (M,g)$ is a sequence of closed embedded minimal surfaces with
\[
\Index(\Sigma_{j})\leq I,
\]
then after passing to a subsequence, there is $C>0$ and a finite set of points $\cB_{j}\subset \Sigma_{j}$ with cardinality $|\cB_{j}|\leq I$ so that the curvature of $\Sigma_{j}$ is uniformly bounded away from the set $\cB_{j}$, i.e.,
\[
|\sff_{\Sigma_{j}}|(x) \min\{1,d_{g}(x,\cB_{j})\}\leq C,
\]
but not at $\cB_{j}$, i.e.,
\[
\liminf_{j\to\infty} \min_{p\in\cB_{j}}|\sff_{\Sigma_{j}}|(p) = \infty.
\]

Passing to a further subsequence, the points $\cB_{j}$ converge to a set of points $\cB_{\infty}$ and the surfaces $\Sigma_{j}$ converge locally smoothly, away from $\cB_{\infty}$, to some lamination $\cL \subset M \setminus \cB_{\infty}$. The lamination has removable singularities, i.e., there is a smooth lamination $\widetilde\cL\subset M$ so that $\cL = \widetilde\cL\setminus\cB_{\infty}$. Moreover, there exists $\varepsilon_{0}>0$ smaller than the injectivity radius of $(M,g)$ so that $\cB_{\infty}$ is $4\varepsilon_{0}$-separated and for any $\varepsilon \in (0,\varepsilon_{0}]$, taking $j$ sufficiently large guarantees that
\begin{enumerate}[itemsep=5pt, topsep=5pt]
\item Writing $\Sigma_{j}'$ for the components of $\Sigma_{j}\cap B_{2\varepsilon}(\cB_{\infty})$ containing at least one point from $\cB_{j}$, no component of $\Sigma_{j}'$ is a topological disk, so we call $\Sigma_{j}'$ the ``neck components.'' They have the following additional properties:
\begin{enumerate}[itemsep=5pt, topsep=5pt]
\item The surface $\Sigma_{j}'$ intersects $\partial B_{\varepsilon}(\cB_{\infty})$ transversely in at most $m(I)$ simple closed curves.
\item Each component of $\Sigma_{j}'$ is unstable. 
\item The genus\footnote{See Definition \ref{defi:genus-bdry}.} of $\Sigma_{j}'$ is bounded above by $r(I)$.
\item The area of $\Sigma_{j}'$ is uniformly bounded, i.e.,
\[
\limsup_{j\to\infty} \area_{g}(\Sigma_{j}') \leq 2\pi m(I)\varepsilon^{2}(1 + o(\varepsilon))
\]
as $\varepsilon\to 0$.
\end{enumerate}
\item Writing $\Sigma_{j}''$ for the components of $\Sigma_{j}\cap B_{2\varepsilon}(\cB_{\infty})$ that do not contain any points in $\cB_{j}$, each component of $\Sigma_{j}''$ is a topological disk, so we call $\Sigma_{j}''$ the ``disk components.'' Moreover, we have the following additional properties 
\begin{enumerate}[itemsep=5pt, topsep=5pt]
\item The curvature of $\Sigma_{j}''$ is uniformly bounded, i.e.,
\[
\limsup_{j\to\infty}\sup_{x\in\Sigma_{j}''}|\sff_{\Sigma_{j}}|(x) < \infty.
\]
\item Each component of $\Sigma_{j}''$ has area uniformly bounded above by $2\pi \varepsilon^{2}(1+o(\varepsilon))$.
\end{enumerate}
\end{enumerate}
\end{theo}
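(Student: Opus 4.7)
The plan is to construct the bad set $\cB_j$ via an iterated blow-up/point-selection scheme, each blow-up limit charging at least one unit of Morse index; then to pass to a subsequence that converges to a lamination with removable singularities away from the limiting bad set; and finally to analyze the neck and disk components using a scale-breaking Morse-theoretic argument to handle the intermediate region between the concentration scale and the ambient scale $\varepsilon$.

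First, I would produce $\cB_j$ and the scale-invariant curvature estimate by a Schoen-style point-selection argument (Corollary \ref{coro-curv-bds}): start with $\cB_j = \emptyset$, and while the inequality $|\sff_{\Sigma_j}|(x)\min\{1,d_g(x,\cB_j)\} \leq C$ fails, extract a ``worst'' point, rescale by the curvature there, and extract a non-flat complete embedded minimal surface in $\RR^3$ with index bounded by the remaining index budget. Any such surface is unstable, so a logarithmic-cutoff test function built from a negative eigenfunction of its Jacobi operator (transplanted back to $\Sigma_j$) uses up one dimension of the negative subspace of the second variation; the construction therefore halts after at most $I$ steps, giving $|\cB_j|\le I$. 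Passing to a subsequence, the $\cB_j$ Hausdorff-converge to some $\cB_\infty$ (with possible collisions), and the scale-invariant bound is preserved in the limit.

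Away from $\cB_\infty$ the $\Sigma_j$ have uniformly bounded curvature, so the standard Colding--Minicozzi lamination compactness produces the locally smooth sub-sequential limit $\cL \subset M\setminus\cB_\infty$. For removable singularities, the scale-invariant bound gives in the limit $|\sff_{\cL}|(x)\,d_g(x,\cB_\infty)\le C$, and one extends each leaf through a point of $\cB_\infty$ as a smooth minimal disk (by a direct argument, or invoking Meeks--Perez--Ros as in Remark \ref{rema:MPR}), giving $\widetilde\cL$. Now fix $\varepsilon_0$ smaller than the injectivity radius and one-quarter the minimum pairwise distance in $\cB_\infty$, so that the balls $B_{2\varepsilon}(\cB_\infty)$ are disjoint for $\varepsilon\le\varepsilon_0$. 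A component $\Sigma_j''$ of $\Sigma_j\cap B_{2\varepsilon}(\cB_\infty)$ containing no point of $\cB_j$ has actual curvature bound $|\sff|\le C/\varepsilon$ on it by the scale-invariant estimate, so it converges smoothly to a leaf of $\widetilde{\cL}$ through $\cB_\infty$; the maximum principle then forces it to be a topological disk, and comparison with the limiting flat tangent plane plus monotonicity yields area $\le 2\pi\varepsilon^{2}(1+o(\varepsilon))$.

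For a neck component $\Sigma_j'$ (containing at least one point of $\cB_j$), instability is immediate: the logarithmic-cutoff negative test function produced around the bad point in Step 1 is supported inside $\Sigma_j'$. The number of boundary circles on $\partial B_\varepsilon(\cB_\infty)$, the genus, and the area are the content of the theorem and require Lemma \ref{lemm:ann-decomp}: in the intermediate annular region between the curvature scale at a bad point and the ambient scale $\varepsilon$, the scale-invariant bound forces curvature to be arbitrarily small in a scale-invariant sense, and a Morse-theoretic sweep by level sets of $d_g(\cdot,\cB_j)$ decomposes this annular region into planar (in particular genus-zero) pieces joined to topologically controlled blow-up limits and the limit lamination. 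Given the resulting bound $m(I)$ on the number of boundary circles of $\Sigma_j'$, the genus bound $r(I)$ is obtained by adding the genus of the finitely many blow-up limits (each bounded via Ros \cite{Ros:oneSided} applied to a non-flat bounded-index minimal surface in $\RR^3$) to the genus of the limit lamination, using that no topology is lost in the intermediate annular pieces. The area bound follows from the $m(I)$ boundary circles, each of length $\le 2\pi\varepsilon(1+o(\varepsilon))$, together with monotonicity.

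The main obstacle is the scale-breaking argument in Lemma \ref{lemm:ann-decomp}: one must rule out ``lost topology'' at intermediate scales, in direct analogy with neck-energy vanishing in bubble-tree analyses of harmonic maps, and this rests on the half-space theorem for properly embedded minimal surfaces in $\RR^3$ applied to a blow-up of the would-be non-trivial annular region. Everything else reduces, after Step 1, to quantitative accounting of blow-up limits and standard lamination compactness.
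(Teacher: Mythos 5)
Your high-level plan matches the paper's: a point-picking/induction scheme producing smooth blow-up sets $\cB_j$ with $|\cB_j|\le I$, scale-invariant curvature bounds away from $\cB_j$, lamination compactness and removable singularities for the limit, and a Morse-theoretic annular decomposition (Lemma \ref{lemm:ann-decomp}) to control topology in the intermediate region. The structure and the cited ingredients are the right ones.

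However, there is a genuine gap in how you feed Lemma \ref{lemm:ann-decomp}. You write that ``the scale-invariant bound forces curvature to be arbitrarily small in a scale-invariant sense'' on the intermediate annulus. That is not a consequence of the bound $|\sff_{\Sigma_j}|(x)\,d_g(x,\cB_j)\le C$: the constant $C$ produced by the blow-up is not at your disposal and need not be below $1/4$, and Lemma \ref{lemm:ann-decomp} requires the sharp estimate $|\sff_\Sigma|(x)\,d_g(x,p)<\tfrac14$ as a hypothesis in order to run its mountain-pass argument. Improving $C$ to $1/4$ across all intermediate scales is precisely the technical heart of the paper and is carried out in Propositions \ref{prop:one-point-conc} and \ref{prop:mult-point-conc} by contradiction: if the $1/4$ bound failed at a sequence of scales $\delta_j\to0$, one rescales $\Sigma_j$ by $\delta_j^{-1}$, applies Lemma \ref{lemm:lam-lim-planes} (which in turn uses Proposition \ref{prop:remov-sing-two-sided-stab-lam} and, through Corollary \ref{coro:limit-lam-struct}, the half-space theorem) to conclude that the rescaled sequence converges to a lamination by parallel planes away from a discrete set, and then notes that the curvature at the violating point (which sits at unit distance in the rescaled picture) must tend to zero --- a contradiction. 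In your proposal this step is absorbed into Lemma \ref{lemm:ann-decomp} itself, which in the paper is a purely Morse-theoretic statement and does not contain any blow-up or half-space argument.

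A second omission: you treat the intermediate region as a single annulus between ``the'' curvature scale and $\varepsilon$, but for $I\ge2$ the points of $\cB_j$ can collide at a whole range of intermediate rates, producing a cascade of necks at different scales. The paper handles this by induction on $I$ in Proposition \ref{prop:mult-point-conc}: if $|\cB_\infty|\ge2$ one separates at a fixed scale and lowers the index per ball; if $|\cB_\infty|=1$ but $|\cB_j|\ge2$ one rescales by the maximal pairwise distance in $\cB_j$; and the $1/4$-improvement is re-established at each level. Your proposal needs some comparable multi-scale bookkeeping to be complete. (A smaller imprecision: the maximum principle alone does not make a disk component a topological disk; in the paper this is a consequence of the uniform curvature bound from Lemma \ref{lem:disk-type-have-bd-curv} --- which again rests on the half-space theorem --- together with smooth graphical convergence to a nearly-planar leaf of $\widetilde\cL$.)
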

As is clear from the proof, it would be possible to give explicit bounds for $m(I)$ and $r(I)$, if one desired. 

\begin{rema}\label{rema:colding-gabai}
As remarked above, a similar description in the special case of index one surfaces was recently obtained by Colding--Gabai \cite[Theorem 2.2]{ColdingGabai}. However, our proof differs from theirs (even in the index one case) in how we transfer topological information between scales. Additionally, the higher index case introduces serious technical difficulties, due to the possibility of simultaneous concentration at multiple scales. 
\end{rema}

A key application of Theorem \ref{theo:neck} is a prescription for performing ``surgery'' on a sequence of bounded index minimal surfaces so that their curvature remains bounded, while only changing the topology and geometry in a controllable way. 

\begin{coro}[Controlled surgery]\label{coro:snip}
There exist functions $\tilde r(I)$ and $\tilde m(I)$ with the following property. Fix a closed three-manifold $(M^{3},g)$ and suppose that $\Sigma_{j}\subset (M^{3},g)$ is a sequence of closed embedded minimal surfaces with
\[
\Index(\Sigma_{j})\leq I.
\]
Then, after passing to a subsequence, there is a finite set of points $\cB_{\infty}\subset M$ with $|\cB_{\infty}|\leq I$ and $\varepsilon_{0}>0$ smaller than the injectivity radius of $(M,g)$ so $\cB_{\infty}$ is $4\varepsilon_{0}$-separated, and so that for $\varepsilon \in (0,\varepsilon_{0}]$, if we take $j$ sufficiently large then there exists embedded surfaces $\widetilde \Sigma_{j}\subset (M^{3},g)$ satisfying:
\begin{enumerate}[itemsep=5pt, topsep=5pt]
\item The new surfaces $\widetilde\Sigma_{j}$ agree with $\Sigma_{j}$ outside of $B_{\varepsilon}(\cB_{\infty})$. 
\item The components of $\Sigma_{j}\cap B_{\varepsilon}(\cB_{\infty})$ that do not intersect the spheres $ \partial B_{\varepsilon}(\cB_{\infty})$ transversely and the components that are topological disks appear in $\widetilde\Sigma_{j}$ without any change.
\item The curvature of $\widetilde\Sigma_{j}$ is uniformly bounded, i.e.
\[
\limsup_{j\to\infty}\sup_{x\in\widetilde\Sigma_{j}}|\sff_{\widetilde\Sigma_{j}}|(x) <\infty.
\]
\item Each component of $\widetilde\Sigma_{j}\cap  B_{\varepsilon}(\cB_{\infty})$ which is not a component of $\Sigma_{j}\cap B_{\varepsilon}(\cB_{\infty})$ is a topological disk of area at most $2\pi\varepsilon^{2}(1+o(\varepsilon))$.
\item The genus drops in controlled manner, i.e.,
\[
\genus(\Sigma_{j})-\tilde r(I) \leq \genus(\widetilde\Sigma_{j}) \leq \genus(\Sigma_{j}).
\]
\item The number of connected components increases in a controlled manner, i.e.,
\[
|\pi_{0}(\Sigma_{j})|\leq |\pi_{0}(\widetilde\Sigma_{j})| \leq |\pi_{0}(\Sigma_{j})| + \tilde m(I).
\]
\item While $\widetilde\Sigma_{j}$ is not necessarily minimal, it is asymptotically minimal in the sense that $\lim_{j\to\infty} \Vert H_{\widetilde\Sigma_{j}}\Vert_{L^{\infty}(\widetilde\Sigma_{j})} = 0$.
\end{enumerate}
The new surfaces $\widetilde\Sigma_{j}$ converge locally smoothly to the smooth minimal lamination $\widetilde \cL$ from Theorem \ref{theo:neck}.
\end{coro}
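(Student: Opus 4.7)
The construction is driven directly by Theorem~\ref{theo:neck}: apply it to extract a subsequence, the concentration set $\cB_\infty$ with $|\cB_\infty|\leq I$, the separation radius $\varepsilon_0$, the extended smooth lamination $\widetilde\cL$, and the decomposition of $\Sigma_j\cap B_{2\varepsilon}(\cB_\infty)$ into neck components $\Sigma_j'$ and disk components $\Sigma_j''$. Fix $\varepsilon\in(0,\varepsilon_0]$ and $j$ sufficiently large. I define $\widetilde\Sigma_j$ by keeping $\Sigma_j$ itself outside $B_\varepsilon(\cB_\infty)$, which already yields (1); inside $B_\varepsilon(\cB_\infty)$ I retain unchanged those components of $\Sigma_j\cap B_\varepsilon(\cB_\infty)$ that are either topological disks or fail to meet $\partial B_\varepsilon(\cB_\infty)$ transversely, giving (2); and I replace every remaining (``neck'') component by a collection of capping disks, one for each transverse boundary circle of that component on $\partial B_\varepsilon(\cB_\infty)$.

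\textbf{Cap construction, area, curvature, and topology bookkeeping.} Let $\gamma_1,\dots,\gamma_N$ denote all the transverse intersection circles produced by Theorem~\ref{theo:neck}(1.a), so that $N\leq m(I)$. For each $\gamma_i\subset\partial B_\varepsilon(p)$ with $p\in\cB_\infty$, I would take $D_i\subset\overline{B_\varepsilon(p)}$ to be an area-minimizing embedded disk with $\partial D_i=\gamma_i$; existence and regularity are standard because $\gamma_i$ is smooth, embedded, and null-homotopic in a small geodesic ball. The curvature estimate $|\sff_{\Sigma_j}|(x)\min\{1,d_g(x,\cB_j)\}\leq C$ from Theorem~\ref{theo:neck} forces $\gamma_i$ to have geodesic curvature of order $\varepsilon^{-1}$, so Schoen's interior estimate together with boundary regularity for minimizers gives a uniform bound on $|\sff_{D_i}|$, establishing (3). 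Comparison with the geodesic cone over $\gamma_i$ from $p$ combined with the length bound on $\gamma_i$ (obtained from Theorem~\ref{theo:neck}(1.d) or monotonicity) yields $\area(D_i)\leq 2\pi\varepsilon^2(1+o(\varepsilon))$, which is (4). A neck component with $k_\alpha$ boundary circles and genus $g_\alpha'$ is excised and replaced by $k_\alpha$ disks: this drops the total genus by $g_\alpha'$ and increases $|\pi_0|$ by at most $k_\alpha-1$, while $|\pi_0|$ cannot decrease because the exterior of $B_\varepsilon(\cB_\infty)$ is untouched and cutting a connected piece can only disconnect. Summing over the at most $|\cB_\infty|\leq I$ neck components and using parts (1.a) and (1.c) of Theorem~\ref{theo:neck} gives (5) and (6) with, e.g., $\tilde r(I)=r(I)$ and $\tilde m(I)=m(I)$.

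\textbf{Main obstacle: asymptotic minimality and smooth convergence.} The crux of the proof is property (7). Each $D_i$ is minimal, as is $\Sigma_j\setminus B_\varepsilon(\cB_\infty)$, but along $\gamma_i$ the two pieces meet at an a~priori nonzero angle $\theta_{i,j}$, so their union has a corner. I would smooth this junction in a tubular neighborhood of $\gamma_i$ of width $w_j$, which creates mean curvature of order $\theta_{i,j}/w_j$. The essential input is that $\theta_{i,j}\to 0$ as $j\to\infty$: by Theorem~\ref{theo:neck}, $\Sigma_j$ converges locally smoothly to $\widetilde\cL$ on $M\setminus\cB_\infty$, so the tangent plane of $\Sigma_j$ along $\gamma_i$ approaches that of a leaf $L\subset\widetilde\cL$ meeting $\gamma_i$; simultaneously, the Plateau solutions $D_{i,j}$ subsequentially converge to minimal disks in $\overline{B_\varepsilon(p)}$ bounded by the limiting circle, and by the removability of the singularity of $\widetilde\cL$ at $p$ together with uniqueness of the Plateau solution for small nearly-circular boundary data, these limit disks coincide with $L\cap\overline{B_\varepsilon(p)}$. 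Hence the tangent planes of $D_{i,j}$ and of $\Sigma_j$ agree along $\gamma_i$ in the limit, $\theta_{i,j}\to 0$, and choosing $w_j=\sqrt{\theta_{i,j}}$ delivers $\|H_{\widetilde\Sigma_j}\|_{L^\infty}\to 0$; the same convergence gives local smooth convergence $\widetilde\Sigma_j\to\widetilde\cL$. Making this angular-alignment quantitative --- i.e.\ using the \emph{removable} nature of the singularities of $\widetilde\cL$ from Theorem~\ref{theo:neck} to force the caps to glue $C^1$-smoothly to the exterior --- is the main technical difficulty; everything else in the construction is direct bookkeeping from Theorem~\ref{theo:neck}.
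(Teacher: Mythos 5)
Your construction takes a genuinely different route from the paper, and the route has a gap. The paper proves a local surgery lemma (Proposition~\ref{prop:local-surg}) that replaces the neck components by explicit graph interpolations: near a point $p\in\cB_\infty$, after rescaling, the neck components are graphs $u_{j,1}<\dots<u_{j,n(j)}$ over an annulus converging uniformly to zero with all derivatives, and the replacement surface is defined inside the cylinder by
\[
\widetilde u_{j,l}=\chi\, u_{j,l}+(1-\chi)\Bigl(w_j+\tfrac{l}{n(j)}\eta_j\Bigr),
\]
where $\chi$ is a radial cutoff, $\eta_j\to 0$, and $w_j$ is chosen (roughly, as the average of the top and bottom preserved disk components) so that the new sheets sit strictly between them. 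The strict ordering of the offsets $\tfrac{l}{n(j)}\eta_j$, together with the maximality choice of which sheets to replace in each ``contiguous block,'' is exactly what makes $\widetilde\Sigma_j$ embedded, and the interpolation is smooth, so there is never a corner to fix.

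The gap in your proposal is embeddedness. You cap each transverse circle $\gamma_i\subset\partial B_\varepsilon(p)$ by a Plateau disk $D_i$, but you never show that the $D_i$ are pairwise disjoint, nor that they avoid the disk components of $\Sigma_j\cap B_\varepsilon(\cB_\infty)$ which you are preserving unchanged. Both failure modes are genuinely possible: all the $\gamma_i$ converge to the same equatorial circle, so all the $D_i$ limit to the same leaf $L\cap\overline{B_\varepsilon(p)}$, and nothing in your argument prevents two Plateau disks with nearby (but distinct) boundary circles from crossing, nor prevents a cap from crossing a preserved disk component that also limits to $L$. To get disjointness of the caps one would need (at minimum) to verify that the $\gamma_i$ are ordered graphs over the equatorial circle and appeal to a maximum-principle ordering of Plateau graphs, and even then the interleaving with preserved disk components must be controlled; your write-up does not address either point. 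A second, related gap is that smoothing the corner along each $\gamma_i$ perturbs the surface in an annular tubular neighborhood, and since the sheets are mutually close this perturbation can create new intersections; again, no argument is given that it does not. By contrast the paper's graph interpolation produces disjoint, smooth sheets by construction, so both problems disappear at once.

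Your use of the removability of the singularity of $\widetilde\cL$ to force $C^1$-alignment of the caps with the exterior is the right idea, and it is morally the same input the paper uses (the smooth local convergence of $\Sigma_j$ to $\widetilde\cL$ away from $\cB_\infty$ is why the $u_{j,l}$ converge to zero with all derivatives). But it controls the mean curvature of the smoothed corner, not the embeddedness of the resulting surface, so it does not fill the gap above. A minor further issue: your genus accounting ($\tilde r(I)=r(I)$, dropping by $g_\alpha'$ only) undercounts, since capping $k_\alpha$ boundary circles by disks can also drop the genus by up to $k_\alpha-1$ via the gluing formula $\genus(\Sigma)=\genus(\Sigma_1)+\genus(\Sigma_2)+b-1$ (Appendix~\ref{app:genus-bdry}); the correct bound involves $m(I)$ as well as $r(I)$, though since the statement only asks for \emph{some} functions $\tilde r,\tilde m$ this is easily repaired.
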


\begin{rema}
The strategy we use to prove Theorem \ref{theo:neck} can be extended to a higher dimensional setting (assuming a uniform volume bound). Certain aspects of the local structure change; in particular, due to the failure of the half-space theorem in higher dimensions, the separation of sheets into ``neck regions'' and ``disk regions'' does not occur in the same way as in three dimensions. As such, we will not attempt to formulate a higher dimensional version of Theorem \ref{theo:neck} or Corollary \ref{coro:snip}, but from the proof of Theorems \ref{theo:fin-top-type-Mn} and \ref{theo:fin-top-type-Rn} it is clear that the general picture described in the introduction holds.
\end{rema}

 \subsection{Related Results} As remarked above, Li--Zhou \cite{LiZhou} have proven compactness results for embedded minimal surfaces with bounded index in a three-manifold with a metric of positive Ricci curvature. This was preceded by the higher dimensional (i.e., allowing the ambient manifold to be $n$-dimensional for $3\leq n \leq 7$) result of Sharp \cite{Sharp}, showing that for a metric of positive Ricci curvature, the space of embedded minimal surfaces with uniformly bounded area and index is compact. 
 
After this paper was completed, we were informed by Carlotto that he had independently arrived at a proof of a slightly different version of Corollary \ref{coro:PSC-bumpy}. His paper \cite{Carlotto:generic} appeared at essentially the same time as ours. 

These works mainly focus on properties of limits of surfaces with uniformly bounded index, rather than the way in which such surfaces degenerate. As such, their arguments are of a rather different nature than those in this paper.

Buzano and Sharp \cite{BuzanoSharp:topHighDim} have given an alternative approach to prove topological bounds for hypersurfaces with bounded index and area, cf.\ Theorem \ref{theo:fin-top-type-Mn}. Their approach also yields further geometric information about the $L^{n}$-norm of the second fundamental form. 

 
 Finally, we refer to the works of Ros \cite{Ros:compactnessFTC} and Traizet \cite{Traizet:balancing} studying  how complete embedded minimal surfaces in $\RR^{3}$ with bounded total curvature degenerate. Some parallels can be drawn between their results and Theorem \ref{theo:neck}, but our work takes a different technical approach due to the precise behavior of the index.
 
 \subsection{Outline of the paper} 
 
 In Section \ref{sec:prelim}, we make several preliminary definitions and prove curvature bounds away from finitely many points for hypersurfaces with bounded index. Section \ref{sec:ann-decomp} contains a key topological result allowing us to control the topology of the ``intermediate regions'', assuming a curvature bound of the appropriate form. We establish the local picture of degeneration in three-manifolds in Section \ref{sec:3-mfld-degen} and the surgery result in Section \ref{sec:surg-3-mflds}. These results then allow us to establish the three-dimensional compactness results in Section \ref{sec:3-d-compactness}. Section \ref{sec:high-dim} contains the proof of the higher dimensional results. Appendix \ref{app:genus-bdry} contains a discussion of the non-orientable genus as well as the genus of surfaces with boundary. Appendix \ref{app:finite-index-RR3} recalls certain facts about finite index surfaces in $\RR^{3}$, while Appendix \ref{app:two-sided} contains a brief discussion about two-sidedness on small scales. In Appendix \ref{app:remov-sing} we provide proofs of several removable singularity results. Finally, Appendix \ref{app:exam-degen} contains examples to illustrate that the various forms of degeneration discussed in the proof of Propositions \ref{prop:one-point-conc} and \ref{prop:mult-point-conc} can in fact occur. 
  
 \subsection{Acknowledgements} We are grateful to Dave Gabai for several illuminating conversations. We thank Nick Edelen for several useful comments and Alessandro Carlotto for pointing out a mistaken reference in an earlier version of Corollary \ref{coro:PSC-bumpy}. We would like to thank the referee for several useful suggestions. 
 
 O.C.\ would like to acknowledge Jacob Bernstein for a helpful discussion regarding removable singularity results as well as Sander Kupers for kindly answering some questions about topology. He was partially supported by an NSF Graduate Research Fellowship DGE-1147470 during the time which most of this work was undertaken, and also acknowledges support by the EPSRC Programme Grant, ÔSingularities of Geometric Partial Differential EquationsÕ number EP/K00865X/1. D.K. was partially supported by an NSF Postdoctoral Fellowship DMS-1401996. D.M.\ would like to acknowledge Richard Schoen for several helpful conversations. He was partially supported by NSF grant DMS-1512574 during the completion of this work. 
\section{Preliminaries}\label{sec:prelim}

\subsection{Definitions and basic notation} Let $\Sigma$ be a closed embedded minimal hypersurface in $(M,g)$. Recall, whether $\Sigma$ is one-sided or two-sided, the {\it Morse index} of $\Sigma$, henceforth denoted by $\Index(\Sigma)$, is defined as the number of negative eigenvalues of the quadratic form associated to second variation of area:
\[Q (v,v) := \int_\Sigma |\nabla^\perp v|^2 - |\sff_\Sigma|^2|v|^2 - \Ric(v,v)\,d\Sigma, \]
where $v$ is a section of the normal bundle of $\Sigma$ in $M$, $\nabla^\perp$ the induced connection, and $\sff_\Sigma$ the second fundamental form of $\Sigma$. Whenever $\Sigma$ is two-sided, the Morse index is equal to the number of negative eigenvalues of the associated Jacobi operator $\Delta_\Sigma + |\sff_\Sigma|^2 + \Ric(N,N)$ acting on smooth functions $\varphi \in C^\infty (\Sigma)$. If $\Sigma$ is one-sided, however, we consider the orientable double cover $\widehat \Sigma \rightarrow \Sigma $. The corresponding change of sheets involution of $\tau:\widehat \Sigma \rightarrow \widehat\Sigma$ must satisfy $N \circ \tau = -N$ for any choice of unit normal vector $N$ for $\widehat{\Sigma}$. The Morse index of $\Sigma$ is then equal to the negative eigenvalues of the operator $\Delta_{\widehat\Sigma}  + |\sff_{\widehat\Sigma}|^2 + \Ric(N,N) $ over the space of smooth  functions $\varphi \in C^\infty(\widehat\Sigma)$ satisfying $\varphi \circ \tau = -\varphi$. 

In Lemma \ref{lem:two-sided-small-balls}, we show that a hypersurface that is properly embedded in a topological ball is two-sided. We will use this frequently below.

We will be dealing with sequences of embedded minimal surfaces without area bounds in three manifolds and so it will be convenient to consider minimal laminations: A closed set $\cL$ in $M^3$ is called a {\it minimal lamination} if $\cL$ is the union of pairwise disjoint, connected, injectively immersed minimal surfaces, called {\it leaves}. For each point $x\in M^3$, we require the existence of a neighborhood $x\in \Omega$ and a $C^{0,\alpha}$ local coordinate chart $\Phi:\Omega \rightarrow \RR^3$ under which image the leaves of $\cL$ pass through in slices of the form $\RR^2\times\{t\} \cap \Phi(\Omega)$.

All distance functions considered in our work will be induced by some ambient metric, and we denote by $d_h$ the distance function induced by the metric $h$. Given a closed set $\cS$, we let $d_h(\cdot, \cS)$ denote the distance to $\cS$ with respect to the metric $h$. If $\cS$ is a finite set of points, $|\cS|$ will denote its cardinality, and for some $\delta>0$, we say that $\cS$ is {\it$ \delta$-separated} if $d_h(x, \cS\setminus\{x\}) >\delta $ for every $x\in\cS$. We will also consider metric balls and write, as usual, $B_r(p)$ to denote the ball of radius $r>0$ centered at $p$. If a finite set of points is $\delta$-separated and $\delta>r>0$, then the set of points within distance at most $r$ from $\cS$ forms a union of disjoint balls, which we will denote by $B_r(\cS)$ (Note: we omit the dependence of the ambient metric in our notation for $B_r$ as it should always be clear in the context in which is being used). 

\subsubsection{Smooth blow-up sets} The following definition turns out to be quite convenient in the sections to come. Suppose that $(M_{j},g_{j},0_{j})$ is a sequence\footnote{In practice, either $(M_{j},g_{j},0_{j})$ will be a fixed (independently of $j$) compact manifold or $(M_{\infty},g_{\infty},0_{\infty})$ will be Euclidean space.} of complete pointed Riemannian manifolds which are converging in the pointed Cheeger--Gromov sense to $(M_{\infty},g_{\infty},0_{\infty})$. Suppose that $\Sigma_{j}$ is a sequence of embedded minimal hypersurfaces in $(M_{j},g_{j})$. A sequence of finite sets of points $\cB_{j}\subset \Sigma_{j}$ is said to be \emph{a sequence of smooth blow-up sets} if:
	\begin{enumerate}[itemsep=5pt, topsep=5pt]
		\item The set $\cB_{j}$ remains a finite distance from the basepoint $0_{j}$, i.e.
		\[
		\limsup_{j\to\infty}\max_{p\in \cB_{j}} d_{g_{j}}(p,0_{j}) < \infty.
		\]
		\item If we set $\lambda_{j}(p) : = |\sff_{\Sigma_{j}}|(p)$ for $p \in \cB_{j}$, then the curvature of $\Sigma_{j}$ blows up at each point in $\cB_{j}$, i.e.,
		\[
		\liminf_{j\to\infty} \min_{p\in\cB_{j}} \lambda_{j}(p) = \infty.
		\]
		\item If we choose a sequence of points $p_{j}\in \cB_{j}$, then after passing to a subsequence, the rescaled surfaces $\overline\Sigma_{j}:=\lambda_{j}(p_{j})(\Sigma_{j} - p_{j})$ converge locally smoothly to a complete, non-flat, embedded minimal surface $\overline\Sigma_{\infty}\subset\RR^{n}$ without boundary, satisfying
		\[
		|\sff_{\overline\Sigma_{\infty}}|(x) \leq |\sff_{\overline\Sigma_{\infty}}|(0),
		\]
		for all $x\in\RR^n$. 
		\item The blow-up points do not appear in the blow-up limit of the other points, i.e.,
		\[
		\liminf_{j\to\infty}\min_{\substack{p,q \in \cB_{j}\\ p\not=q}} \lambda_{j}(p) d_{g_{j}}(p,q) = \infty.
		\]
	\end{enumerate}

\subsection{Curvature estimates and index concentration} 

Recall that Schoen \cite{Sch83} has proven that two-sided stable minimal surfaces in a three-manifold have uniformly bounded curvature. Subsequently, the two-sided hypothesis was shown to be unnecessary by Ros \cite{Ros:oneSided}.\footnote{We remark that properly \emph{embedded} surfaces are two-sided on small scales (cf.\ Lemma \ref{lem:two-sided-small-balls}), so the curvature estimates from \cite{Sch83} actually suffice when considering embedded surfaces, which is what we will do below.} In particular, we have:
\begin{theo}[\cite{Sch83,Ros:oneSided}]\label{theo:stable-curv-est-3d}
Fix $(M^{3},g)$ a closed three-manifold. There is $C=C(M,g)$ so that if $\Sigma\subset (M,g)$ is a compact stable minimal surface, then 
\[
|\sff_{\Sigma}|(x)\min\{1, d_{g}(x,\partial\Sigma)\} \leq C
\]
for all $x \in \Sigma$.
\end{theo}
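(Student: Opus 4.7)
\medskip

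My plan is to prove the estimate by a contradiction and rescaling argument; although the original proof of Schoen in \cite{Sch83} was analytic (using the stability inequality together with a Simons-type inequality and a Moser-type iteration), the rescaling approach is cleaner and makes transparent the role of Ros's one-sided extension. Suppose for contradiction that the estimate fails. Then there exists a sequence of compact stable minimal surfaces $\Sigma_{j}\subset(M,g)$ and points $p_{j}\in\Sigma_{j}$ with
\[
|\sff_{\Sigma_{j}}|(p_{j})\min\{1,d_{g}(p_{j},\partial\Sigma_{j})\}\to\infty.
\]
First I would apply a standard point-picking argument (in the spirit of Choi--Schoen or White) to replace $p_{j}$ by $\tilde p_{j}\in\Sigma_{j}$ and radii $r_{j}\to 0$ with $r_{j}|\sff_{\Sigma_{j}}|(\tilde p_{j})\to\infty$, $r_{j}<d_{g}(\tilde p_{j},\partial\Sigma_{j})$, and
\[
\sup_{B_{r_{j}}(\tilde p_{j})\cap\Sigma_{j}}|\sff_{\Sigma_{j}}|\leq 2|\sff_{\Sigma_{j}}|(\tilde p_{j})=:2\lambda_{j}.
\]

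Next I would rescale by $\lambda_{j}$, obtaining surfaces $\overline\Sigma_{j}:=\lambda_{j}(\Sigma_{j}-\tilde p_{j})$ in balls in $(M,\lambda_{j}^{2}g)$ of radius $\lambda_{j}r_{j}\to\infty$. These ambient balls converge smoothly to $(\mathbb{R}^{3},g_{\mathrm{euc}},0)$. Since $|\sff_{\overline\Sigma_{j}}|\leq 2$ uniformly on these balls and $|\sff_{\overline\Sigma_{j}}|(0)=1$, standard compactness for minimal surfaces with uniform curvature bounds (plus a uniform mass bound from the monotonicity formula) produces a smooth subsequential limit $\Sigma_{\infty}$ which is a complete, properly embedded, non-flat minimal surface in $\RR^{3}$, with $|\sff_{\Sigma_{\infty}}|(0)=1$. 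Because $\Sigma_{\infty}$ is properly embedded in $\RR^{3}$, it is two-sided (cf.\ Lemma \ref{lem:two-sided-small-balls} or the usual Alexander-duality argument).

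The key step is then to show that $\Sigma_{\infty}$ inherits stability, and apply the Bernstein-type theorem of Fischer-Colbrie--Schoen (and do Carmo--Peng) which says a complete two-sided stable minimal surface in $\RR^{3}$ must be a flat plane; this would contradict $|\sff_{\Sigma_{\infty}}|(0)=1$. In the two-sided case, stability passes to the limit immediately because any compactly supported test function on $\Sigma_{\infty}$ can be approximated by compactly supported test functions on $\overline\Sigma_{j}$ via the smooth convergence, and the quadratic form $Q$ converges. The main obstacle, and the content of Ros's one-sided extension, is to handle the case where $\Sigma_{j}$ is one-sided. Here I would use that on any simply connected subdomain $U\subset\Sigma_{j}$ the double cover $\widehat{\Sigma}_{j}\to\Sigma_{j}$ splits as two disjoint copies of $U$, so a compactly supported function $\varphi$ on $U$ lifts to a skew-symmetric test function $\hat\varphi$ on $\widehat{\Sigma}_{j}$ with $\int_{\widehat\Sigma_{j}}|\nabla\hat\varphi|^{2}-(|\sff|^{2}+\mathrm{Ric}(N,N))\hat\varphi^{2}=2Q_{U}(\varphi,\varphi)$. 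One-sided stability therefore forces the usual two-sided stability on every simply connected subdomain, and in particular on the balls $B_{r_{j}}(\tilde p_{j})\cap\Sigma_{j}$ for $j$ large (using Lemma \ref{lem:two-sided-small-balls}). Passing to the limit gives that $\Sigma_{\infty}$ is two-sided stable, completing the contradiction.
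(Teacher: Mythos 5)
The paper does not actually prove this statement: it is quoted from \cite{Sch83} (two-sided) and \cite{Ros:oneSided} (one-sided) and then used as the $I=0$ base case of the inductive Lemma~\ref{lemm:curv-est}, whose point-picking and rescaling machinery is essentially the same as yours. So your blow-up strategy is very much in the spirit of how the rest of the paper works, and the reduction of the one-sided case to the two-sided one via skew-symmetric lifts on two-sided subdomains is the correct mechanism behind Ros's extension. That said, there are two points you should fix.

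First, the phrase ``plus a uniform mass bound from the monotonicity formula'' is not justified and, as written, false. Monotonicity bounds the density ratio $\mathrm{Area}(\overline\Sigma_j\cap B_\rho)/\rho^2$ from above only by the ratio at a \emph{larger} scale, and you have no upper bound on the area of $\Sigma_j$ to feed in at the top scale. With only the uniform curvature bound $|\sff_{\overline\Sigma_j}|\le 2$, the subsequential limit need not be a single properly embedded surface; in general it is a minimal lamination, possibly with infinitely many sheets near any point. The conclusion you want is still attainable: the leaf of the limit lamination through the origin is complete, has $|\sff|(0)=1$, carries a limiting unit normal (since each $\overline\Sigma_j$ is two-sided by Lemma~\ref{lem:two-sided-small-balls}), and inherits the stability inequality, so the Fischer-Colbrie--Schoen / do~Carmo--Peng theorem --- which applies to complete two-sided stable minimal \emph{immersions} in $\RR^3$, not just proper embeddings --- gives that the leaf is a plane, a contradiction. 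But then you should drop the appeal to proper embeddedness and Alexander duality for two-sidedness, since properness is exactly what you cannot guarantee; the normal on the limit leaf comes for free from the normals on the $\overline\Sigma_j$.

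Second, in the one-sided reduction you invoke ``any simply connected subdomain $U$'' and then apply it to $U = B_{r_j}(\tilde p_j)\cap\Sigma_j$, which in general is \emph{not} simply connected (it may have many boundary circles or positive genus). What the skew-symmetric lift actually requires is that the restriction of the orientation double cover to $U$ be trivial, i.e.\ that $U$ be a \emph{two-sided} subdomain of $\Sigma_j$. That is exactly what Lemma~\ref{lem:two-sided-small-balls} gives you once $r_j$ is below the injectivity radius (which your point-picking ensures, since $r_j\to 0$). So the argument is correct, but the hypothesis should be ``two-sided'' rather than ``simply connected'', and the role of Lemma~\ref{lem:two-sided-small-balls} should be stated as supplying two-sidedness rather than simple connectivity.
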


Here we show that sequence of embedded minimal surfaces of bounded index have curvature bounds away from at most finitely many points. This can be thought of as a generalization of Schoen and Ros's curvature estimates for stable minimal surfaces to the case of finite Morse index. The proof by induction is most convenient if we prove a more general bound for surfaces with boundary. 

\begin{lemm}\label{lemm:curv-est}
Fix $(M^{3},g)$ a closed three-manifold and $I \in \NN$. Suppose that $\Sigma_{j}\subset (M,g)$ is a sequence of compact embedded minimal surfaces with $\Index(\Sigma_{j})\leq I$. Then, after passing to a subsequence, there exist $C>0$ and a sequence of smooth blow-up sets $\cB_{j}\subset \Sigma_{j}$ with $|\cB_{j}|\leq I$, so that 
\[
|\sff_{\Sigma_{j}}|(x)\min\{1,d_{g}(x,\cB_{j}\cup\partial\Sigma_{j}) \} \leq C.
\]
for all $x \in \Sigma_{j}$.
\end{lemm}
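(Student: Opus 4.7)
I would prove the lemma by induction on the index bound $I$. The base case $I=0$ is immediate from Theorem~\ref{theo:stable-curv-est-3d}: every $\Sigma_j$ is then stable, and we take $\cB_j=\emptyset$. For the inductive step, assume the conclusion for index bound $I-1$, and set $F_j(x):=|\sff_{\Sigma_j}|(x)\min\{1,d_g(x,\partial\Sigma_j)\}$. If $\sup_{\Sigma_j} F_j$ is bounded along a subsequence, we take $\cB_j=\emptyset$. Otherwise, pick $p_j\in\Sigma_j$ realizing $\sup F_j$, and set $\lambda_j:=|\sff_{\Sigma_j}|(p_j)$. The maximization forces $\lambda_j\to\infty$, $\lambda_j d_g(p_j,\partial\Sigma_j)\to\infty$, and $|\sff_{\Sigma_j}|\leq C\lambda_j$ on the ball $B_{d_g(p_j,\partial\Sigma_j)/2}(p_j)\cap\Sigma_j$. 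Rescaling by $\lambda_j$ about $p_j$ in normal coordinates and invoking Cheeger--Gromov-type compactness for embedded minimal surfaces of bounded curvature produces, after a further subsequence, a locally smooth limit $\overline\Sigma_\infty\subset\RR^3$---a complete, boundary-less, embedded minimal surface with $|\sff_{\overline\Sigma_\infty}|(0)=1$ and $|\sff_{\overline\Sigma_\infty}|\leq 1$. Since properly embedded surfaces in $\RR^3$ are two-sided (cf.\ Lemma~\ref{lem:two-sided-small-balls}) and $\overline\Sigma_\infty$ is non-flat, Fischer--Colbrie--Schoen's theorem makes it unstable, so there exists $\varphi\in C_c^\infty(\overline\Sigma_\infty)$, supported in some ball $B_R(0)$, with $Q_{\overline\Sigma_\infty}(\varphi,\varphi)<0$.

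Transferring $\varphi$ via the local smooth convergence and the rescaling yields test functions $\tilde\varphi_j\in C_c^\infty(\Sigma_j)$, supported in $B_{R/\lambda_j}(p_j)\cap\Sigma_j$, with $Q_{\Sigma_j}(\tilde\varphi_j,\tilde\varphi_j)<0$ for $j$ large. Fix a slowly growing sequence $R_j\to\infty$ (say $R_j=\sqrt{\lambda_j}$) and define $\Sigma_j':=\Sigma_j\setminus B_{R_j/\lambda_j}(p_j)$. Since $\tilde\varphi_j$ has support disjoint from any Dirichlet test function on $\Sigma_j'$, pairing would give an $(I+1)$-dimensional negative subspace in $\Sigma_j$, forcing $\Index(\Sigma_j')\leq I-1$. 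Applying the inductive hypothesis to $\Sigma_j'$ produces a sequence of smooth blow-up sets $\cB_j'\subset\Sigma_j'$ with $|\cB_j'|\leq I-1$ and the corresponding curvature bound. Set $\cB_j:=\cB_j'\cup\{p_j\}$, so $|\cB_j|\leq I$. The desired estimate on $\Sigma_j$ is then verified in two regimes: inside $B_{2R/\lambda_j}(p_j)$ the rescaled picture directly controls $|\sff_{\Sigma_j}|\cdot d_g(\cdot,p_j)$; outside, the elementary bound $d_g(x,\cB_j\cup\partial\Sigma_j)\leq 2\,d_g(x,\cB_j'\cup\partial\Sigma_j')$, which follows from $p_j\in\cB_j$ together with the nearby new boundary component of $\Sigma_j'$, allows the inductive estimate to transfer.

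The main obstacle is verifying that $\cB_j$ inherits the full smooth-blow-up-set property, most delicately condition~(4): $\lambda_j(p)d_g(p,q)\to\infty$ for every distinct pair $p,q\in\cB_j$. Pairs inside $\cB_j'$ are given by induction, and for the mixed pair based at $p_j$ the choice $R_j\to\infty$ yields $\lambda_j d_g(p_j,q_j)\geq R_j\to\infty$ at once. The reverse direction, based at $q_j\in\cB_j'$, is the subtle step: I would extract it from condition~(3) of the inductive smooth-blow-up-set data for $\cB_j'$, which forces the boundary of $\Sigma_j'$ to escape to infinity in the $q_j$-rescaled picture, and in particular $\lambda_j(q_j)d_g(q_j,\partial B_{R_j/\lambda_j}(p_j)\cap\Sigma_j)\to\infty$. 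A triangle-inequality estimate, together with a case analysis on whether $\lambda_j(q_j)/\lambda_j$ is bounded below or tends to zero (using $\lambda_j(q_j)\leq \lambda_j$ from the maximization of curvature at $p_j$), yields $\lambda_j(q_j)d_g(p_j,q_j)\to\infty$; indeed, the alternative would force the $q_j$-rescaled limit of $\Sigma_j'$ to develop either a puncture or an interior boundary circle at bounded distance, contradicting condition~(3) for $\cB_j'$. This bootstrap of the boundary-escape condition~(3) into the full scale-separation condition~(4) across the combination step is, in my view, the technical crux that makes the induction close.
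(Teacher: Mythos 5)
Your overall strategy---induction on $I$, rescale at a curvature-concentration point, excise a ball that carries a unit of index, apply the inductive hypothesis to the remainder---matches the paper's. However, two points in your argument are genuine gaps. First, the one-shot choice of $p_j$ maximizing $F_j := |\sff_{\Sigma_j}|\min\{1,d_g(\cdot,\partial\Sigma_j)\}$ only yields $|\sff_{\overline\Sigma_\infty}|\leq 2$ (not $\leq 1$) on the rescaled limit; the definition of a smooth blow-up set requires $|\sff_{\overline\Sigma_\infty}|(x)\leq|\sff_{\overline\Sigma_\infty}|(0)$ everywhere, and to achieve this the paper performs a second, refined point-picking inside a small ball around the first candidate. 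For the same reason, the inequality $\lambda_j(q_j)\leq\lambda_j$ that you invoke (attributed to ``the maximization of curvature at $p_j$'') does not follow: $p_j$ maximizes $F_j$, not $|\sff_{\Sigma_j}|$ itself.

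Second, and more seriously, the growing cutoff radius $R_j\to\infty$ is problematic for verifying condition (3) of the smooth-blow-up-set definition for $\cB_j=\cB_j'\cup\{p_j\}$, namely that for $q_j\in\cB_j'$ the $q_j$-rescaled \emph{full} surface $\Sigma_j$ (not merely $\Sigma_j'$) converges smoothly to a complete boundaryless limit. Write $\eta_j := |\sff_{\Sigma_j}|(q_j)$. Even granting your (correct) argument that $\eta_j\,d_g(p_j,q_j)\to\infty$, the excised cap $\Sigma_j\cap B_{R_j/\lambda_j}(p_j)$ may fail to escape in the $q_j$-rescaled picture: its nearest point to $q_j$ sits at rescaled distance at least $\eta_j\,d_g(q_j,p_j)-2\eta_j R_j/\lambda_j$, and these two terms can diverge at comparable rates (indeed $\eta_j R_j/\lambda_j\leq\eta_j\,d_g(q_j,p_j)$ since $q_j\notin B_{R_j/\lambda_j}(p_j)$, so nothing prevents the difference from remaining bounded). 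The paper avoids this by fixing $\widehat R$ once and for all so that $|\sff_{\widehat\Sigma_\infty}|\leq\tfrac14$ on $\widehat\Sigma_\infty\setminus B_{\widehat R}(0)$; by smooth convergence this forces $\eta_j\leq\tfrac12\lambda_j$ for $j$ large, hence $\eta_j\widehat R/\lambda_j\leq\tfrac12\widehat R$ stays bounded and the excised cap genuinely escapes. With a fixed $\widehat R$ one must then argue separately that $\lambda_j\,d_g(p_j,q_j)\to\infty$ (which your $R_j\to\infty$ gave for free); the paper does this by reusing the same estimate $\eta_j\leq\tfrac12\lambda_j$ to reduce to $\eta_j\,d_g(p_j,q_j)$ bounded, which contradicts the inductive boundary-escape property of $\widetilde\cB_j$.
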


\begin{proof}
We prove this by induction on $I$. When $I = 0$, the surface $\Sigma$ is stable, so the statement is exactly the curvature estimates discussed in Theorem \ref{theo:stable-curv-est-3d}. 

For $I > 0$, consider $\Sigma_{j} \subset (M,g)$ with $\Index(\Sigma_{j}) \leq I$. By passing to a subsequence, we may assume that 
\[
\rho_{j} : = \sup_{x\in \Sigma_{j}} |\sff_{\Sigma_{j}}|(x)\min\{1,d_{g}(x,\partial\Sigma_{j})\} \to \infty.
\]
If we cannot find such a subsequence, it is easy to see that the curvature estimates hold with $\cB_{j} = \emptyset$. 

A standard point picking argument allows us to find $\widetilde p_{j}\in \Sigma_{j}$ so that for $\lambda_{j}=|\sff_{\Sigma_{j}}|(\widetilde p_{j})\to\infty$, the rescaled surfaces 
\[
\overline\Sigma_{j} : = \lambda_{j}(\Sigma_{j}-\widetilde p_{j})
\]
converge locally smoothly, after passing to a subsequence, to a properly embedded\footnote{Usually, the blow-up limit $\widehat\Sigma_{\infty}$ would only be injectively immersed. Here, because it has finite index and no boundary, it is properly embedded by Theorem \ref{theo:fin-index-imp-proper}.} two-sided minimal surface in $\RR^{3}$, $\widehat \Sigma_{\infty}$, of index at most $I$ and with no boundary, so that
\[
|\sff_{\widehat\Sigma_{\infty}}|(x) \leq |\sff_{\widehat\Sigma_{\infty}}|(0) = 1.
\]
For the reader's convenience, we recall the point picking argument at the end of the proof. 

Because $\widehat\Sigma_{\infty}$ is non-flat, there is some radius $ \widehat R > 0$ so that $\widehat\Sigma_{\infty}\cap B_{\widehat R}(0)$ has non-zero index, $\widehat\Sigma_{\infty}\setminus B_{\widehat R}(0)$ is stable, and $\widehat\Sigma_{\infty}$ intersects $\partial B_R(0)$ transversely. Moreover, taking $\widehat R$ larger if necessary, we may arrange that all of these properties are satisfied in addition to
\begin{equation}\label{eq:hat-sig-infty-curv-est}
|\sff_{\widehat\Sigma_{\infty}}|(x) \leq \frac 14.
\end{equation}
for $x \in \widehat\Sigma_{\infty}\setminus B_{\widehat R}(0)$.\footnote{That non-flatness of $\widehat\Sigma_{\infty}$ implies non-zero index is a consequence of \cite{Fischer-Colbrie-Schoen,doCarmoPeng,Pogorelov}. The remaining claims in this paragraph can be proven as follows: because $\hat\Sigma_{\infty}$ has finite index, it has finite total curvature \cite[Theorem 2]{Fischer-Colbrie:1985}. By \cite[Proposition 1]{Schoen:symmetry}, $\widehat\Sigma_{\infty}$ is ``regular at infinity,'' i.e., graphical over a fixed plane outside of a compact set with good asymptotic behavior. This is easily seen to imply the remaining claims. }

We define $\widetilde{\Sigma}_{j} : = \Sigma_{j} \setminus B_{\widehat R/\lambda_{j}}(\widetilde p_{j})$. For $j$ large, this ball cannot intersect the boundary of $\Sigma_{j}$ (by the choice of $\widetilde p_{j}$ and because $\rho_{j}\to\infty$) and $\partial B_{\widehat R/\lambda_{j}}(\tilde p_{j})$ intersects $\Sigma_{j}$ transversely. Thus, $\widetilde{\Sigma}_{j}$ is a smooth compact minimal surface with smooth, compact boundary \[\partial\widetilde{\Sigma}_{j} = \partial\Sigma_{j} \cup (\partial B_{\widehat R/\lambda_{j}}(\widetilde p_{j})\cap \Sigma_{j}).\]
For $j$ large, $ \Index(\widetilde{\Sigma}_{j}) \leq I-1$. By the inductive hypothesis, passing to a subsequence, there is a sequence of smooth blow-up sets $\widetilde\cB_{j} \subset \Sigma_{j}$ with $|\widetilde\cB_{j}|\leq I-1$ and a constant $\widetilde C$ (independent of $j$) so that
\begin{equation}\label{eq:curv-est-induct-hyp}
|\sff_{\widetilde{\Sigma}_{j}}|(x)\min\{ 1,d_{g}(x,\widetilde\cB_{j} \cup \partial\widetilde{\Sigma}_{j})\} \leq \widetilde C.
\end{equation}

We claim that $\cB_{j} : = \widetilde\cB_{j}\cup \{\widetilde p_{j}\}$ is a sequence of smooth blow-up sets. The only thing we must check is that none of the points in $\widetilde\cB_{j}$ can appear in the blow-up around $\widetilde p_{j}$ and vice versa (in particular, this guarantees that rescaling  $\Sigma_{j}$ around points in $\widetilde\cB_{j}$ still yields a smooth limit). First, suppose that 
\[
\liminf_{j\to\infty}\min_{\widetilde r \in \widetilde \cB_{j}} \lambda_{j} d_{g_{j}}(\widetilde r,\widetilde p_{j}) < \infty,
\]
where we recall that $\lambda_{j} = |\sff_{\Sigma_{j}}|(\widetilde p_{j})$. Assume that, the minimum is attained at $\widetilde r_{j}\in\widetilde\cB_{j}$. By choice of $\widehat R$ (specifically \eqref{eq:hat-sig-infty-curv-est}) we see that after passing to a subsequence,
\[
\eta_{j} : = |\sff_{\Sigma_{j}}|(\widetilde r_{j}) \leq \frac 12 |\sff_{\Sigma_{j}}|(\widetilde p_{j}) = \frac 12 \lambda_{j}.
\]
Thus, we have reduced to the other possibility, i.e.
\[
\liminf_{j\to\infty} \eta_{j}d_{g_{j}}(\widetilde r_{j},\widetilde p_{j}) < \infty.
\]
However, this is a contradiction, as the blow-up of $\widetilde\Sigma_{j}$ around $\widetilde r_{j}$ has no boundary (by the inductive step). 

Now, suppose that there is $z_{j} \in \Sigma_{j}$ so that
\[
\limsup_{j\to\infty}|\sff_{\Sigma_{j}}|(z_{j})\min\{1,d_{g_j}(z_{j},\cB_{j}\cup\partial\Sigma_{j})\}  = \infty.
\]
Combined with \eqref{eq:curv-est-induct-hyp} and choice of $\widetilde p_{j}$, we may pass to a subsequence with $z_{j} \in \widetilde\Sigma_{j}$ and
\begin{align*}
d_{g_j}(z_{j},\cB_{j}\cup\partial\Sigma_{j}) & = d_{g_j}(z_{j},\widetilde p_{j}) \to 0,\\
d_{g_j}(z_{j},\widetilde\cB_{j}\cup \partial\widetilde \Sigma_{j}) & = d_{g_j}(z_{j},\widetilde p_{j}) - \frac{\widehat R}{\lambda_{j}}.
\end{align*}
Because $\widehat\Sigma_{\infty}$ has bounded curvature, we see that $z_{j}$ cannot appear in the blow-up around $\widetilde p_{j}$, i.e.,
\[
\liminf_{j\to\infty} \lambda_{j}d_{g_j}(z_{j},\widetilde p_{j}) = \infty.
\]
Thus,
\[
\limsup_{j\to\infty} |\sff_{\Sigma_{j}}|(z_{j}) \frac{\widehat R}{\lambda_{j}} \leq \limsup_{j\to\infty} \frac{\widetilde C \widehat R}{\lambda_{j}d_{g_j}(z_{j},\widetilde\cB_{j}\cup\partial\widetilde\Sigma_{j})} = 0.
\]
Combined with \eqref{eq:curv-est-induct-hyp}, this implies that
\[
\widetilde C \geq \limsup_{j\to\infty} |\sff_{\Sigma_{j}}|(z_{j}) \min\{1,d_{g_j}(z_{j},\widetilde\cB_{j}\cup\partial\widetilde\Sigma_{j}) \} = \infty,
\]
a contradiction. This completes the proof.

Finally, we recall the point-picking argument used above to construct $\hat\Sigma_{\infty}$. Choose $\tilde q_{j} \in \Sigma_{j}$ so that 
\[
|\sff_{\Sigma_{j}}|(\tilde q_{j}) \min\{1,d_{g}(\tilde q_{j},\partial\Sigma_{j})\} = \rho_{j} \to \infty
\]
and set $r_{j} = |\sff_{\Sigma_{j}}|(\tilde q_{j})^{-\frac 12}$. Then, choose $\widetilde p_{j} \in \Sigma_{j}\cap B_{r_{j}}(\widetilde q_{j})$ so that
\[
|\sff_{\Sigma_{j}}|(\widetilde p_{j}) d_{g}(\widetilde p_{j},\partial B_{r_{j}}(\widetilde q_{j})) = \max_{x \in \Sigma_{j}\cap B_{r_{j}}(\widetilde q_{j})}|\sff_{\Sigma_{j}}|(x) d_{g}(x,\partial B_{r_{j}}(\widetilde q_{j})).
\]
Note that the right hand side is at least $|\sff_{\Sigma_{j}}|(\tilde q_{j})^{\frac 12}$ (which is tending to infinity) by choice of $r_{j}$. Let $R_{j} = d_{g}(\widetilde p_{j},\partial B_{r_{j}}(\widetilde q_{j}))$. Because $d_{g}(x,\partial B_{R_{j}}(\widetilde p_{j})) \leq d_{g}(x,\partial B_{r_{j}}(\widetilde q_{j}))$ for $x \in B_{R_{j}}(\widetilde p_{j})$, we find that 
\[
|\sff_{\Sigma_{j}}|(\widetilde p_{j}) d_{g}(\tilde p_{j},\partial B_{R_{j}}(\tilde p_{j})) = \max_{x \in \Sigma_{j}\cap B_{R_{j}}(\widetilde p_{j})}|\sff_{\Sigma_{j}}|(x) d_{g}(x,\partial B_{R_{j}}(\widetilde p_{j})).
\]
Note that $|\sff_{\Sigma_{j}}|(\widetilde p_{j})R_{j} \geq |\sff_{\Sigma_{j}}|(\widetilde q_{j})r_{j} \to \infty$. 

As above, we set $\lambda_{j} = |\sff_{\Sigma_{j}}|(\tilde p_{j})$. Then, the rescaled surfaces 
\[
\overline\Sigma_{j} =\lambda_{j}(\Sigma_{j} - \tilde p_{j})
\]
satisfy
\[
|\sff_{\overline\Sigma_{j}}|(x) d_{\overline g_{j}}(x,\partial B_{\lambda_{j}R_{j}}(0)) \leq \lambda_{j}R_{j},
\]
for $x \in \overline\Sigma_{j}\cap B_{\lambda_{j}R_{j}}(0)$. Thus, if $x\in\overline\Sigma_{j}$ lies in a given compact set of $\RR^{3}$, then
\[
|\sff_{\overline\Sigma_{j}}|(x) \leq \frac{\lambda_{j}R_{j}}{\lambda_{j}R_{j} - d_{\overline g_{j}}(x,0)} \to 1 = |\sff_{\overline\Sigma_{j}}|(0)
\]
as $j\to\infty$. By construction, we see that $d_{\overline g_{j}}(0,\partial \overline \Sigma_{j}) \to \infty$. Passing to a subsequence, we may take a smooth limit of $\lambda_{j}(\Sigma_{j} - \widetilde p_{j})$ to find a complete, non-flat, embedded minimal surface $\widehat \Sigma_{\infty}$ in $\RR^{3}$ of index at most $I$ and with no boundary, completing the point picking argument.
\end{proof}

\begin{coro}\label{coro-curv-bds}
For $(M^{3},g)$ and $I \in \NN$, if $\Sigma_{j}\subset (M,g)$ is a sequence of closed embedded minimal surfaces with $\Index(\Sigma_{j})\leq I$, after passing to a subsequence, there is $C>0$ and a sequence smooth blow-up sets $\cB_{j}\subset \Sigma_{j}$ with $|\cB_{j}|\leq I$, so that
\[
|\sff_{\Sigma_{j}}|(x) d_{g}(x,\cB_{j}) \leq C,
\]
for all $x \in\Sigma_{j}$. 
\end{coro}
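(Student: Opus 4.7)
The corollary should follow essentially immediately from Lemma \ref{lemm:curv-est} once one observes two facts: first, since each $\Sigma_j$ is closed we have $\partial\Sigma_j=\emptyset$, so the lemma produces a sequence of smooth blow-up sets $\cB_j\subset\Sigma_j$ with $|\cB_j|\leq I$ and a constant $C>0$ such that
\[
|\sff_{\Sigma_j}|(x)\,\min\{1,d_g(x,\cB_j)\}\leq C \qquad \text{for all } x\in\Sigma_j;
\]
second, $(M,g)$ is closed, hence has finite diameter $D:=\diam(M,g)<\infty$, so $d_g(x,\cB_j)\leq D$ for every $x\in\Sigma_j$.

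The plan is then just to split into cases according to whether $d_g(x,\cB_j)$ is small or large. If $d_g(x,\cB_j)\leq 1$, then $\min\{1,d_g(x,\cB_j)\}=d_g(x,\cB_j)$ and the lemma gives directly
\[
|\sff_{\Sigma_j}|(x)\,d_g(x,\cB_j)\leq C.
\]
If instead $d_g(x,\cB_j)>1$, then $\min\{1,d_g(x,\cB_j)\}=1$, so $|\sff_{\Sigma_j}|(x)\leq C$, and multiplying by the uniform upper bound $d_g(x,\cB_j)\leq D$ yields
\[
|\sff_{\Sigma_j}|(x)\,d_g(x,\cB_j)\leq CD.
\]
Combining the two cases produces the desired estimate with constant $C':=C\max\{1,D\}$.

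There is no real obstacle here, since all the work has already been done in Lemma \ref{lemm:curv-est}; the corollary is purely a matter of absorbing the cut-off at scale $1$ into a global constant using compactness of the ambient manifold. The only mild point to keep straight is that the subsequence and the set $\cB_j$ in the corollary are exactly those produced by the lemma, so no further diagonal extraction is needed.
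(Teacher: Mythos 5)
Your argument is correct and is exactly what the paper leaves implicit: the corollary is the boundaryless case of Lemma~\ref{lemm:curv-est}, and since $M$ is closed one absorbs the cutoff $\min\{1,\cdot\}$ into the constant using $d_g(x,\cB_j)\leq\diam(M,g)$, precisely as you do.
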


In higher dimensions, we similarly have the following curvature estimates.
\begin{lemm}\label{lemm:curv-est-nD}
Fix, for $4\leq n\leq 7$, a closed $n$-dimensional manifold $(M^{n},g)$, as well as $\Lambda > 0$ and $I \in \NN$. Suppose that $\Sigma_{j}\subset (M,g)$ is a sequence of compact embedded minimal hypersurfaces with $\vol_{g}(\Sigma_{j}) \leq \Lambda$ and $\Index(\Sigma_{j})\leq I$. Then, after passing to a subsequence, there exists $C>0$ and a sequence of smooth blow-up sets $\cB_{j}\subset \Sigma_{j}$ with $|\cB_{j}|\leq I$, so that 
\[
|\sff_{\Sigma_{j}}|(x)\min\{1,d_{g}(x,\cB_{j}\cup\partial\Sigma_{j})\} \leq C.
\]
for all $x \in \Sigma_{j}$.
\end{lemm}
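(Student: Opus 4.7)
The approach is to mimic the inductive proof of Lemma~\ref{lemm:curv-est}, replacing the three-dimensional tools with their higher-dimensional analogs. The base case $I=0$ is the Schoen--Simon curvature estimate \cite{Schoen-Simon:1981} for two-sided stable embedded minimal hypersurfaces, which in the range $3\leq n\leq 7$ gives $|\sff_\Sigma|(x)\min\{1,d_g(x,\partial\Sigma)\}\leq C$ \emph{provided} a local volume bound holds; such a bound follows from the hypothesis $\vol_g(\Sigma_j)\leq\Lambda$ together with monotonicity. The Schoen--Simon estimate is usually stated for two-sided hypersurfaces, but by Lemma~\ref{lem:two-sided-small-balls} an embedded hypersurface is two-sided on small balls, and the blow-up limits we encounter below are automatically two-sided since a properly embedded hypersurface in Euclidean space is two-sided; together these let us drop the two-sidedness hypothesis in our setting.

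For the inductive step $I\geq 1$, if $\rho_j:=\sup_{\Sigma_j}|\sff_{\Sigma_j}|\min\{1,d_g(\cdot,\partial\Sigma_j)\}$ remains bounded along a subsequence we take $\cB_j=\emptyset$; otherwise $\rho_j\to\infty$ and the same point-picking scheme as in the three-dimensional proof produces $\tilde p_j\in\Sigma_j$ with $\lambda_j:=|\sff_{\Sigma_j}|(\tilde p_j)\to\infty$ such that the rescalings $\overline\Sigma_j=\lambda_j(\Sigma_j-\tilde p_j)$ subconverge locally smoothly to a complete, non-flat, properly embedded minimal hypersurface $\hat\Sigma_\infty\subset\RR^n$ without boundary, satisfying $|\sff_{\hat\Sigma_\infty}|(x)\leq|\sff_{\hat\Sigma_\infty}|(0)=1$ and $\Index(\hat\Sigma_\infty)\leq I$. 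The ambient bound $\vol_g(\Sigma_j)\leq\Lambda$, combined with monotonicity and embeddedness, gives $\overline\Sigma_j$ a uniform Euclidean volume growth at every fixed scale, so $\hat\Sigma_\infty$ inherits Euclidean volume growth in $\RR^n$.

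The two structural facts about $\hat\Sigma_\infty$ needed to close the induction are: (i) non-flatness implies $\Index(\hat\Sigma_\infty)\geq 1$; and (ii) there exists $\hat R>0$ such that $\hat\Sigma_\infty\setminus B_{\hat R}(0)$ is stable, meets $\partial B_{\hat R}(0)$ transversely, and satisfies $|\sff_{\hat\Sigma_\infty}|\leq 1/4$ there. Fact (i) is the Bernstein theorem for two-sided stable embedded minimal hypersurfaces in $\RR^n$ with Euclidean volume growth, valid for $3\leq n\leq 7$ by \cite{Schoen-Simon:1981}: if $\Index(\hat\Sigma_\infty)=0$ then $\hat\Sigma_\infty$ is stable, hence a hyperplane, contradicting non-flatness. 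Fact (ii) follows because finite index implies stability outside a compact set, after which the Schoen--Simon curvature estimate together with Euclidean volume growth forces $|\sff_{\hat\Sigma_\infty}|\to 0$ at infinity.

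With (i) and (ii) in hand, the remainder of the argument copies the three-dimensional proof verbatim: set $\tilde\Sigma_j:=\Sigma_j\setminus B_{\hat R/\lambda_j}(\tilde p_j)$, which for $j$ large is a compact embedded minimal hypersurface with volume still at most $\Lambda$, smooth transverse boundary, and $\Index(\tilde\Sigma_j)\leq I-1$; apply the inductive hypothesis to obtain smooth blow-up sets $\tilde\cB_j$ with $|\tilde\cB_j|\leq I-1$; and set $\cB_j:=\tilde\cB_j\cup\{\tilde p_j\}$. The verifications that $\cB_j$ is a sequence of smooth blow-up sets and that $|\sff_{\Sigma_j}|(x)\min\{1,d_g(x,\cB_j\cup\partial\Sigma_j)\}\leq C$ are then carried out word-for-word as at the end of the proof of Lemma~\ref{lemm:curv-est}. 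The main obstacle relative to the three-dimensional case is establishing (i) and (ii) in higher dimensions, which is precisely where the global volume bound $\Lambda$ is essential: without it one loses both the Bernstein rigidity and the curvature decay at infinity needed to control the blow-up limit.
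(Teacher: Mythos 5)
Your proposal is correct and follows essentially the same inductive blow-up strategy as the paper's proof of Lemma~\ref{lemm:curv-est-nD}, with a couple of small local variations. For the base case $I=0$ you invoke the Schoen--Simon interior estimate directly (using the global volume bound plus monotonicity to get the required local area bounds), whereas the paper instead argues by contradiction: a point-picking blow-up would produce a non-flat stable embedded hypersurface in $\RR^n$ with Euclidean volume growth and bounded second fundamental form, contradicting a Bernstein-type rigidity statement (itself assembled from \cite{Simons:cones,SSY,Schoen-Simon:1981} plus the observation, from \cite{Samelson}, that properly embedded hypersurfaces in $\RR^n$ are two-sided). Both arguments are valid. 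For the curvature decay at infinity on the blow-up limit $\hat\Sigma_\infty$ (needed to pick $\hat R$), you give a clean direct argument via stability outside a compact set plus the Schoen--Simon estimate at large scales; the paper instead appeals to ``regularity at infinity'' via \cite{Schoen:symmetry,Tysk:finite-index}. Again, both are fine.

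One thing the paper's proof flags that you did not mention, and which is worth keeping in mind: since the half-space theorem fails for $n\geq 4$, the blow-up limit $\hat\Sigma_\infty$ may be disconnected (e.g.\ a higher-dimensional catenoid trapped between two planes). The monotonicity formula together with the volume bound caps the number of components, and your argument still goes through---the component through the origin is the one that is non-flat and hence unstable, and the finite-components observation is what guarantees you can pick a single $\hat R$ making $\hat\Sigma_\infty \cap \partial B_{\hat R}(0)$ transverse and $\hat\Sigma_\infty\setminus B_{\hat R}(0)$ stable with small curvature---but your writeup silently treats $\hat\Sigma_\infty$ as if it were a single connected surface. It would be cleaner to state explicitly that (a) $\hat\Sigma_\infty$ has finitely many components, (b) the component through $0$ is non-flat hence unstable, and (c) the choice of $\hat R$ accommodates all components simultaneously, before appealing to the verbatim repetition of the three-dimensional argument.
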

\begin{proof}
The argument is similar to Lemma \ref{lemm:curv-est}, so we will be brief. By \cite{Simons:cones,SSY,Schoen-Simon:1981}, if $\hat \Sigma$ is an embedded, two-sided stable minimal hypersuface in $\RR^{n}$ (for $4\leq n\leq 7$), with Euclidean volume growth, i.e. $\lim_{R\to\infty} R^{1-n} \vol(\hat\Sigma\cap B_{R}) < \infty$, then it is a finite union of finitely many parallel planes. Because a complete properly embedded hypersurface in $\RR^{n}$ is two-sided (cf.\ \cite{Samelson}) and an embedded hypersurface in $\RR^{n}$ with bounded second fundamental form and Euclidean volume growth is easily seen to be properly embedded, we obtain the following Bernstein-type result: if $\hat \Sigma$ is an embedded stable minimal hypersurface in $\RR^{n}$ (for $4\leq n\leq 7$) with Euclidean volume growth and bounded second fundamental form, then it is the union of finitely many parallel planes. In particular, we do not need to assume a priori that $\hat\Sigma$ is two-sided.

From this, we obtain the claim when $I=0$. Indeed, if it were false, we could find a sequence of compact embedded stable minimal hypersurfaces $\Sigma_{j}\subset (M^{n},g)$ with $\vol_{g}(\Sigma_{j}) \leq \Lambda$. The point picking argument used above then produces a non-flat, embedded stable minimal hypersurface in $\RR^{n}$ with Euclidean volume growth (by the volume bounds and the monotonicity formula) and uniformly bounded second fundamental form. This contradicts the above observation. 

More generally, assume that the result fails for some fixed index bound $I$ and for a sequence $\Sigma_{j}$. Then, as in the proof of Lemma \ref{lemm:curv-est}, we may find $\tilde p_{j}\in\Sigma_{j}$ so that for $\lambda_{j} = |\sff_{\Sigma_{j}}|(\tilde p_{j})$, the rescaled surfaces 
\[
\overline\Sigma_{j} : = \lambda_{j}(\Sigma_{j}-\widetilde p_{j})
\]
converge locally smoothly, after passing to a subsequence, to an embedded finite index minimal hypersurface $\widehat{\Sigma}_{\infty}$ in $\RR^{n}$ with Euclidean area growth with 
\[
|\sff_{\widehat\Sigma_{\infty}}|(x) \leq |\sff_{\widehat\Sigma_{\infty}}|(0) = 1.
\]
Unlike the case when $n=3$, it is possible that $\hat\Sigma_{\infty}$ has multiple components.\footnote{Recall that the half-space theorem fails for $n\geq 4$. For example, for $n\geq 4$, a catenoid in $\RR^{n}$ is bounded between two parallel planes.} However, the monotonicity formula guarantees that the number of components is bounded. Thanks to the above observation (implying that the index of $\hat\Sigma_{\infty}$ is non-zero) and the fact that there are only finitely many components, we may choose $\hat R$ exactly as in the proof of Lemma \ref{lemm:curv-est-nD}. The rest of the proof proceeds by removing $B_{\hat R/\lambda_{j}}(p_{j})$ from $\Sigma_{j}$ and using the inductive step, exactly as in Lemma \ref{lemm:curv-est}. 
\end{proof}

\section{Annular decomposition from curvature estimates}\label{sec:ann-decomp}

The following lemma is a generalization of \cite[p.\ 251]{Whi87} (see also \cite[Lemma 4.1]{MePeRo13}). It will play a crucial role in later arguments, allowing us to transmit topological information between different scales.

\begin{lemm}[Annular decomposition]\label{lemm:ann-decomp}
There is $0 < \tau_{0}  < \frac 12$ with the following property. Assume that $g$ is a Riemannian metric on $\{|x|\leq 4\}\subset \RR^{n}$ which is sufficiently smoothly close to $g_{\RR^{n}}$. Suppose that $\Sigma\subset B_{2}(0)$ is a properly embedded hypersurface with $\partial\Sigma\subset \partial B_{2}(0)$. Assume that for some $\tau \leq \tau_{0}$ and $p \in B_{\tau_{0}}(0)$, we have:
\begin{enumerate}[itemsep=5pt, topsep=5pt]
\item Each component of $\Sigma$ intersects $B_{\tau}(p)$.
\item The hypersurface $\Sigma$ intersects $\partial B_{\tau}(p)$ transversely in $m$ manifolds diffeomorphic to $\SS^{n-2}$ with the standard smooth structure.
\item The curvature of $\Sigma$ satisfies $|\sff_{\Sigma}|(x)d_{g}(x,p)\leq \frac 14$ for all $x \in \Sigma \cap \left( \overline{B_{1}(0)} \setminus B_{\tau}(p) \right) $. 
\end{enumerate}
Then, $\Sigma$ intersects $\partial B_{1}(0)$ transversely in $m$ manifolds diffeomorphic to the standard $\SS^{n-2}$ and each component of $\Sigma\cap \left(\overline{B_{1}(0)}\setminus {B_{\tau}(p)}\right)$ is diffeomorphic to $\SS^{n-2}\times [0,1]$ with the standard smooth structure.
\end{lemm}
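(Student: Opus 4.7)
The plan is to carry out a Morse-theoretic analysis of the squared distance function $\tilde f(x) := \tfrac12 d_g(x,p)^2$ restricted to $\Sigma$, following the sweep-out strategy of White and Meeks--Perez--Ros.

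First I would establish the pointwise \emph{strong convexity} of $\tilde f|_\Sigma$ on $A := \Sigma\cap(\overline{B_1(0)}\setminus B_\tau(p))$. Applying the submanifold Hessian formula
\[
\Hess_\Sigma \tilde f(X,X) = \Hess_M \tilde f(X,X) - \langle\nabla\tilde f,\nu\rangle\,\sff_\nu(X,X),
\]
and using that in the near-Euclidean metric $\Hess \tilde f \approx g$ and $|\nabla\tilde f|\approx d(\cdot,p)$, hypothesis~(3) yields
\[
\Hess_\Sigma \tilde f(X,X) \geq |X|^2 - |\sff|\,d(\cdot,p)\,|X|^2 \geq \tfrac34 |X|^2
\]
pointwise on $A$ (with constant $\tfrac12$ suffices once $\tau_0$ is small enough that the metric is sufficiently Euclidean). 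In particular, every critical point of $\tilde f|_\Sigma$ in $A$ is a strict local minimum, and no saddle-type critical points can occur in $A$.

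Next I would study the gradient flow of $-\nabla_\Sigma \tilde f$ on $\Sigma$: it points outward across the inner boundary $A\cap\partial B_\tau(p)$ (toward $p$) and inward across the outer boundary $A\cap\partial B_1(0)$ (since $p\in B_{\tau_0}(0)$, moving toward $p$ means moving into $B_1(0)$). Hence for each component $A_0$ of $A$, a downward flow line either converges to an interior strict minimum of $\tilde f|_{A_0}$ or exits through $(\partial A_0)_\tau := A_0\cap\partial B_\tau(p)$; dually, every upward flow line exits through $(\partial A_0)_1 := A_0\cap\partial B_1(0)$, which is therefore non-empty. The hard part will be to rule out interior critical points of $\tilde f|_\Sigma$ in $A$, which is where hypothesis~(1) becomes essential. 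If $x_0\in A$ were such a critical point, lying in a component $\Sigma_i$ of $\Sigma$, hypothesis~(1) supplies $y\in\Sigma_i\cap B_\tau(p)$ with $\tilde f(y)<\tau^2/2<\tilde f(x_0)$; a mountain-pass argument applied to paths in $\Sigma_i\cap\overline{B_1(0)}$ from $x_0$ to $y$ produces a min-max critical value $c\geq \tilde f(x_0)>\tau^2/2$, whose associated critical point must lie outside $B_\tau(p)$ (where $\tilde f<\tau^2/2$) and inside $\overline{B_1(0)}$, hence in $A$. Strong convexity precludes saddles in $A$, while a mountain-pass critical value cannot be realized by a strict local minimum, giving the desired contradiction.

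Finally, with no interior critical points of $\tilde f|_\Sigma$ in $A$, the level sets $\{\tilde f = s\}\cap A$ foliate $A$ smoothly for $s\in[\tau^2/2,s_{\max}]$, and the flow of $\nabla_\Sigma\tilde f/|\nabla_\Sigma\tilde f|^2$ produces a smooth diffeomorphism $A_0\cong (A_0\cap\partial B_\tau(p))\times[0,1]$. By hypothesis~(2) each inner boundary component is a standard $\SS^{n-2}$, so each $A_0$ is the standard cylinder $\SS^{n-2}\times[0,1]$. Transversality of $\Sigma\cap\partial B_1(0)$ follows from running the analogous argument for $g_0(x):=d_g(x,0)$: the bound $|\sff|\,d(\cdot,0)\leq \tfrac12$ (once $\tau_0$ is small) gives an analogous Hessian estimate for $g_0|_\Sigma$ near $\partial B_1(0)$, ruling out critical points there and forcing $\Sigma$ to meet $\partial B_1(0)$ transversely in $m$ standard $\SS^{n-2}$'s, matching the $m$ ends of the cylinders.
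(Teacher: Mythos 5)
The core Morse-theoretic idea here---compute $\Hess_\Sigma$ of squared distance, show hypothesis~(3) makes it strictly positive definite, and rule out interior critical points via the mountain pass lemma---matches the paper's strategy. However, the final step of your argument has a genuine gap.

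The problem is your claim that ``the level sets $\{\tilde f = s\}\cap A$ foliate $A$ smoothly for $s\in[\tau^2/2,s_{\max}]$'' and that the gradient flow produces a diffeomorphism $A_0\cong(A_0\cap\partial B_\tau(p))\times[0,1]$. This fails because the outer boundary $\partial B_1(0)$ of the region $A$ is a sphere centered at \emph{$0$}, not at $p$, so it is not a level set of $\tilde f = \tfrac12 d(\cdot,p)^2$. When $p\neq 0$, the function $\tilde f$ takes values ranging over $[\tfrac12(1-|p|)^2,\tfrac12(1+|p|)^2]$ on $\partial B_1(0)$. Consequently the level surfaces of $\tilde f$ cut across $\partial B_1(0)$, and the gradient-flow time for different starting points on $\partial B_\tau(p)$ to exit through $\partial B_1(0)$ varies; you do not obtain a product structure from $\tilde f$ alone. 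Your closing remark that transversality at $\partial B_1(0)$ ``follows from running the analogous argument for $d_g(x,0)$'' acknowledges the issue but does not resolve it: having two separate convex functions, one adapted to each boundary sphere, does not by itself give a Morse function whose sublevel sets realize $A_0$ as a product.

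The paper's fix is precisely the missing step: it builds a single interpolating function
\[
f(x) = d_{\RR^n}(x,p)^2\,\chi\bigl(d_{\RR^n}(0,x)^2\bigr) + d_{\RR^n}(x,0)^2\bigl(1-\chi(d_{\RR^n}(0,x)^2)\bigr),
\]
which equals $d(\cdot,p)^2$ near $\partial B_\tau(p)$ and $d(\cdot,0)^2$ near $\partial B_1(0)$, so that \emph{both} boundary spheres of $A$ are level sets of $f$. The nontrivial content (and the bulk of the paper's proof) is then the computation showing $D^2_\Sigma f$ remains strictly positive definite on $\Sigma\setminus B_\tau(p)$: the cross terms involving $\chi'$, $\chi''$, and the discrepancy between $d(\cdot,p)^2$ and $d(\cdot,0)^2$ are all controlled by $d_{\RR^n}(p,0)\leq\tau_0$, and choosing $\tau_0$ small enough makes these error terms small relative to the dominant $\tfrac34|v|^2$ term. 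This is what allows one to run a single Morse-theoretic argument between the two boundary spheres and conclude that each component of $A$ is a standard cylinder. If you want to salvage your approach, you should construct such an interpolating function and verify the convexity estimate for it, rather than using $d(\cdot,p)^2$ alone.
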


\begin{proof}
As long as $g$ is sufficiently close to $g_{\RR^{n}}$, working in normal coordinates around $p$, a computation as in \cite[pp.\ 417--8]{Huisken-Ilmanen:2001} shows that the third hypothesis implies that the curvature of $\Sigma$ with respect to $g_{\RR^{n}}$ satisfies 
\[
|\sff^{\RR^{n}}_{\Sigma}|(x) d_{\RR^{n}}(x,p) \leq \frac 12.
\]
Hence, it is not hard to check that it is suffices to take $g=g_{\RR^{n}}$. 

Choose $\chi \in C^{\infty}_{c}([0,1))$ a smooth positive cutoff function so that $\chi(r) \in [0,1]$, $\chi(r) = 1$ for $r \leq \frac {1}{4}$ and $\chi(r) = 0$ for $r$ sufficiently close to $1$. We will take $\tau>0$ sufficiently small based on this fixed cutoff function. Consider the function
\[
f(x) = d_{\RR^{n}}(x,p)^{2}\chi(d_{\RR^{n}}(0,x)^{2}) + d_{\RR^{n}}(x,0)^{2}(1-\chi(d_{\RR^{n}}(0,x)^{2})).
\]
By assuming $\tau>0$ is sufficiently small, we see that $f(x) = d_{\RR^{n}}(x,p)^{2}$ near $\partial B_{\tau}(p)$ and $f(x) = d_{\RR^{n}}(x,0)^{2}$ near $\partial B_{1}(0)$. Note that for any point $q \in \RR^{n}$,
\begin{align*}
\nabla^{\Sigma}(d_{\RR^{n}}(x,q)^{2}) & = 2 ((x-q) - \bangle{x-q,N}N)\\
(D^2_{\Sigma} (d_{\RR^{n}}(x,q)^{2}))_x (v,v) & = 2\left(|v|^2-\sff_\Sigma(x)(v,v)\bangle{x-q,N}\right),
\end{align*}
where $N$ is any choice of normal vector at $x$ and $v$ is any vector in $T_x\Sigma$. Thus, we compute
\begin{align*}
(D^{2}_{\Sigma} f)_{x} (v,v) & = 2\chi\left(|v|^2-\sff_\Sigma(x)(v,v)\bangle{x-p,N}\right)\\
& + 2(1-\chi)\left(|v|^2-\sff_\Sigma(x)(v,v)\bangle{x,N}\right)\\
& + 2\chi'(d_{\RR^{n}}(x,p)^{2}-d_{\RR^{n}}(x,0)^{2})\left(|v|^2-\sff_\Sigma(x)(v,v)\bangle{x,N}\right)\\
& + 4\chi''(d_{\RR^{n}}(x,p)^{2}-d_{\RR^{n}}(x,0)^{2})\left( \bangle{x,v} - \bangle{x,N}\bangle{N,v} \right)^{2}\\
& + 8 \chi' \left( \bangle{x-p,v} - \bangle{x-p,N}\bangle{N,v} \right)\left( \bangle{x,v} - \bangle{x,N}\bangle{N,v} \right)\\
& - 8 \chi' \left( \bangle{x,v} - \bangle{x,N}\bangle{N,v} \right)^{2}\\
& = 2\left(|v|^2- \sff_\Sigma(x)(v,v)\bangle{x-p,N}\right)\\
& - 2(1-\chi) \sff_\Sigma(x)(v,v)\bangle{p,N}\\
& + 2\chi'(d_{\RR^{n}}(x,p)^{2}-d_{\RR^{n}}(x,0)^{2})\left(|v|^2-\sff_\Sigma(x)(v,v)\bangle{x,N}\right)\\
& + 4\chi''(d_{\RR^{n}}(x,p)^{2}-d_{\RR^{n}}(x,0)^{2})\left( \bangle{x,v} - \bangle{x,N}\bangle{N,v} \right)^{2}\\
& - 8 \chi' \left( \bangle{p,v} - \bangle{p,N}\bangle{N,v} \right)\left( \bangle{x,v} - \bangle{x,N}\bangle{N,v} \right).
\end{align*}
Observe that for $\tau>0$ sufficiently small, $|\sff_{\Sigma}|(x)| \leq \frac 12$ on the supports of $1-\chi$, $\chi'$ and $\chi''$. In particular, it is easy to see that on $\Sigma\setminus B_{\tau}(p)$,
\[
(D^{2}_{\Sigma}f)_{x}(v,v) \geq 2\left(|v|^2- \sff_\Sigma(x)(v,v)\bangle{x-p,N}\right) - Cd_{\RR^{n}}(p,0) |v|^{2},
\]
for some $C>0$ independent of $\tau$. Combined with the assumed second fundamental form bounds, we have that 
\[
(D^{2}_{\Sigma}f)_{x}(v,v) \geq 2\left(\frac 3 4 -  Cd_{\RR^{n}}(p,0)\right) |v|^{2}.
\]
Thus, as long as $d_{\RR^{n}}(p,0) \leq \tau$ is sufficiently small, this is strictly positive. 

Choosing such a $\tau$, any critical point of $f$ in $\Sigma \setminus B_{\tau}(p)$ must be a strict local minimum. The mountain pass lemma then implies that $f$ cannot have any critical points in the interior of $\Sigma\setminus B_{\tau}(p)$. Thus, the result follows from standard Morse theory.
\end{proof}

\section{Degeneration of bounded index minimal surfaces in three-manifolds}\label{sec:3-mfld-degen}

Let $I$ be a natural number. In this section, we analyze how a sequence of embedded minimal surfaces with index at most $I$ in a three-manifold might degenerate and prove Theorem \ref{theo:neck}. By the curvature estimates from Corollary \ref{coro-curv-bds}, we will be mostly working on small scales near a finite set of at most $I$ points so that we will frequently find ourselves in situations where the following hypothesis, which we will call \makeatletter
 \Hy@raisedlink{\hypertarget{defi:aleph}{}}$(\aleph)$, hold.
 \vspace{7pt} 
 
Suppose that $g_{j}$ is a sequence of metrics on $\{|x| \leq 2r_{j} \} \subset \RR^{3}$ with $r_{j}\to\infty$, so that $g_{j}$ is locally smoothly converging to the Euclidean metric $g_{\RR^{3}}$. Assume also that:
 \makeatother
 
 \begin{enumerate}[itemsep=5pt, topsep=5pt]
 \item We have $\Sigma_{j}\subset B_{r_{j}}(0)$ a sequence of properly embedded minimal surfaces with $\partial\Sigma_{j}\subset \partial B_{r_{j}}(0)$.
 \item The surfaces have $\Index(\Sigma_{j})\leq I$.
 \item There is a sequence of non-empty smooth blow-up sets $\cB_{j}\subset B_{\tau_{0}}(0)$ (where $\tau_{0}$ is fixed in Lemma \ref{lemm:ann-decomp}) with $|\cB_{j}|\leq I$ and $C>0$ so that 
 \[
 |\sff_{\Sigma_{j}}|(x) d_{g_{j}}(x,\cB_{j}\cup\partial\Sigma_{j}) \leq C,
 \]
 for $x \in\Sigma_{j}$.
 \item The smooth blow-up sets converge to a set of points $\cB_{\infty}$ and there is a smooth lamination $\Lambda \subset \RR^{3}\setminus \cB_{\infty}$ so that $\Sigma_{j}$ converges locally smoothly to $\Lambda$ away from $\cB_{\infty}$.
 \end{enumerate}
 \noindent Then, we will say that $\Sigma_{j}$ satisfies \hyperlink{defi:aleph}{$(\aleph)$}. \\ 

Observe that by Lemma \ref{lem:two-sided-small-balls}, the surfaces $\Sigma_{j}$ are all two-sided. We will use this repeatedly below without comment.
 
All the statements we will prove when working under hypothesis \hyperlink{defi:aleph}{$(\aleph)$} will turn out to be open conditions, so the reader may think of all the metric balls to be defined using the Euclidean distance.

\begin{lemm}\label{lemm:lam-lim-planes}
For $\Lambda$ as in \hyperlink{defi:aleph}{$(\aleph)$}, the lamination $\Lambda$ extends across $\cB_{\infty}$ to a smooth lamination $\widetilde\Lambda \subset \RR^{3}$. After a rotation, $\widetilde\Lambda = \RR^{2}\times K$ for $K\subset \RR$ closed. 
\end{lemm}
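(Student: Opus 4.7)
The plan is to extend $\Lambda$ across $\cB_\infty$ using a removable singularity theorem, show that every leaf of the extended lamination is stable in $\RR^3$, apply the Bernstein-type theorem of Fischer-Colbrie--Schoen / do Carmo--Peng / Pogorelov to conclude that every leaf is a flat plane, and finally use disjointness to force the planes to be parallel.

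First I would pass the curvature estimate from $(\aleph)$ to the limit. Since $\Sigma_j \to \Lambda$ locally smoothly away from $\cB_\infty$, and $\partial \Sigma_j \subset \partial B_{r_j}(0)$ with $r_j \to \infty$, on any fixed compact set $K \Subset \RR^3 \setminus \cB_\infty$ we have $d_{g_j}(x,\cB_j \cup \partial\Sigma_j) = d_{g_j}(x,\cB_j)$ for $j$ large and $x\in \Sigma_j\cap K$. Taking $j\to\infty$ gives the scale-invariant curvature bound
\[
|\sff_\Lambda|(x)\, d_{\RR^3}(x,\cB_\infty) \leq C \quad \text{for all } x \in \Lambda.
\]
This is exactly the hypothesis for the removable singularity results proved in Appendix \ref{app:remov-sing} (and of the sort used in Meeks--Perez--Ros); applying them at each isolated point of $\cB_\infty$ produces a smooth minimal lamination $\widetilde\Lambda \subset \RR^3$ with $\widetilde\Lambda \setminus \cB_\infty = \Lambda$.

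Next I would show every leaf $L$ of $\widetilde\Lambda$ is stable. Since any leaf of $\widetilde\Lambda$ is two-sided (a lamination in the orientable $\RR^3$ can be locally trivialized by graphs, so each leaf has a trivial normal bundle), the standard argument applies: on a given compact subdomain $\Omega\subset L$, the surfaces $\Sigma_j$ locally decompose, for $j$ large, into a finite collection of graphs over $\Omega$ with respect to a unit normal $\nu_L$. The (signed) distance between successive sheets, suitably normalized, is a non-negative function satisfying the Jacobi equation in the limit. Passing to a subsequence yields a nonnegative Jacobi function $u$ on $\Omega$; by the strong maximum principle $u>0$, so $L$ admits a positive Jacobi function and is therefore stable on $\Omega$. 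Exhausting $L$ by such $\Omega$, each leaf is stable. (In the event of multiplicity-one convergence one instead uses the sign-changing difference of two nearby leaves, or a diagonal argument using the curvature estimate above to produce nearby sheets.)

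Now each leaf $L$ is a complete, two-sided, stable minimal immersion into $\RR^3$. By the classical Bernstein-type theorem of Fischer-Colbrie--Schoen \cite{Fischer-Colbrie-Schoen}, do Carmo--Peng, and Pogorelov, $L$ is an affine plane. Finally, since distinct leaves of a lamination are disjoint, and any two non-parallel planes in $\RR^3$ meet, all leaves of $\widetilde\Lambda$ are mutually parallel. After a rotation we may take their common normal direction to be vertical, so each leaf has the form $\RR^2\times\{t\}$. Writing $K := \{t \in \RR : \RR^2\times\{t\} \subset \widetilde\Lambda\}$, closedness of the lamination gives closedness of $K$, and $\widetilde\Lambda = \RR^2\times K$.

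The main obstacle is Step 2: establishing stability of the limit leaves in the lamination. The subtlety is that limit leaves can arise either as multiplicity-one smooth limits (in which case one needs neighboring sheets to build the positive Jacobi function) or as higher-multiplicity limits (in which case the normalized separation naturally produces the Jacobi function). Both cases must be handled, but in either one the curvature bound $|\sff_\Lambda|(x)d(x,\cB_\infty)\leq C$ guarantees enough geometric control to carry out the graphical decomposition away from $\cB_\infty$, and the removable singularity step ensures the resulting plane extends across $\cB_\infty$.
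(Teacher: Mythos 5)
Your plan has a genuine gap in Step 2, and the ordering of Steps 1 and 2 is also problematic.

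\textbf{The stability claim fails in the multiplicity-one case.} You assert that every leaf $L$ of $\widetilde\Lambda$ is stable, with the multiplicity-one case deferred to a parenthetical about ``sign-changing differences of nearby leaves'' or ``a diagonal argument.'' This does not work. If $L$ is an isolated leaf and $\Sigma_j$ converges to it with multiplicity one, there are no neighboring sheets from which to extract a positive Jacobi function, and the single sheet of $\Sigma_j$ converging to $L$ may carry index up to $I$. So $L$ may genuinely be unstable and non-flat, and nothing in your argument rules this out. The paper does not attempt to prove stability of every leaf; instead it establishes that, after removing the singularities, $\widetilde\Lambda$ has finite index and hence (by Corollary \ref{coro:limit-lam-struct}) is \emph{either} a lamination by parallel planes \emph{or} a single non-flat properly embedded surface of finite total curvature. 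It then kills the second alternative by a different mechanism: such a surface would be limited to with multiplicity one, whence Allard's regularity theorem would force $\Sigma_j$ to converge smoothly near $\cB_\infty$, contradicting the nonemptiness of the smooth blow-up set. Your Bernstein-only route cannot reach this conclusion because it presupposes the stability you cannot prove.

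\textbf{The removable-singularity step is invoked with insufficient hypotheses (and out of order).} You apply the removable singularity results of Appendix \ref{app:remov-sing} using only the inherited curvature bound $|\sff_\Lambda|(x)\,d(x,\cB_\infty)\le C$. But the paper's self-contained tools there need more: Proposition \ref{prop:remov-sing-proper} requires the surface to be properly embedded, and Proposition \ref{prop:remov-sing-two-sided-stab-lam} requires every leaf to have stable universal cover. Corollary \ref{coro:lam-min-pts-smooth-or-plane}, which does assume only the curvature bound, yields the weaker dichotomy ``extends across $\cB$ or contains a plane,'' not unconditional removability. One could invoke the general removable-singularity theorem of Meeks--P\'erez--Ros to justify your Step 1 directly (Remark \ref{rema:MPR} notes this), but the paper deliberately avoids that external input and instead proves stability of the leaves in a small ball $B_\varepsilon(\cB_\infty)$ \emph{first} --- using the index bound on $\Sigma_j$ together with \cite[Lemma A.1]{MeRo06} for multiplicity $>1$ leaves and a Fischer-Colbrie-type shrinking of $\varepsilon$ for the finitely many multiplicity-one leaves --- and only then applies Proposition \ref{prop:remov-sing-two-sided-stab-lam}. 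Your order ``remove the singularity, then prove stability everywhere'' inverts the logical dependence.

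One further small point: you assert two-sidedness of leaves of $\widetilde\Lambda$ because ``$\RR^3$ is orientable'' and the lamination is locally trivial by graphs. Leaves of a minimal lamination are only injectively immersed, not necessarily properly embedded, so the local picture does not automatically give a global normal bundle trivialization. The paper is more careful here and passes to a double cover where needed.
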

\begin{proof}
We claim there exists  $\varepsilon > 0$ sufficiently small so that each leaf in $\Lambda \cap B_{\varepsilon}(\cB_{\infty})$ has stable universal cover. We first choose $\varepsilon >0$ sufficiently small so that $\cB_{\infty}$ is $4\varepsilon$-separated (we will choose $\varepsilon>$ smaller below). 

On one hand, if a leaf of $\Lambda \cap B_{\varepsilon}(\cB_{\infty})$ has the convergence to occurring with multiplicity bigger than one, then it must have stable universal cover (cf.\ \cite[Lemma A.1]{MeRo06}). On the other hand, consider the set of leaves of $\Lambda \cap B_{\varepsilon}(\cB_{\infty})$ where the convergence to occurs with multiplicity one. By passing to a double cover if necessary, we may assume that all such leaves are two-sided.\footnote{Because the $\Sigma_{j}$ are all two-sided by Lemma \ref{lem:two-sided-small-balls}, even if they limit to a one-sided leaf, the index bounds hold for the two-sided double cover.} Each leaf must have bounded index, and sum of the index of such leaves must be bounded above by $I$ (or else it would violate the bound for $\Index(\Sigma_j)$). In particular there are only finitely many unstable leaves. For each leaf, we may argue as in \cite[Proposition 1]{Fischer-Colbrie:1985} to find $\varepsilon > 0$ even smaller so that it is stable in  $B_{\varepsilon}(\cB_{\infty})$. Since there are only finitely many of such leaves, we may arrange that each leaf of $\Lambda \cap B_{\varepsilon}(\cB_{\infty})$ has stable universal cover.\footnote{Here we use the well-known fact that two-sided stability passes to covers \cite{Fischer-Colbrie-Schoen}.}

Thus, by Proposition \ref{prop:remov-sing-two-sided-stab-lam}, $\Lambda\cap B_{\varepsilon}(\cB_{\infty})$ extends across $\cB_{\infty}$. Thus, there is a smooth lamination $\widetilde\Lambda\subset \RR^{3}$ with $\Lambda = \widetilde\Lambda \setminus \cB_{\infty}$. Finally, by Corollary \ref{coro:limit-lam-struct}, $\widetilde\Lambda$ is either a non-flat single properly embedded surface of finite total curvature, or else is a lamination of $\RR^{3}$ by parallel planes. 

If $\widetilde\Lambda$ is a non-flat properly embedded surface of finite total curvature, then the convergence of $\Sigma_{j}$ to $\Lambda$ (away from $p_{\infty}$) must occur with multiplicity one.\footnote{To see this, note that if the convergence had multiplicity greater than one, then $\widetilde\Lambda$ would necessarily be stable and thus flat by \cite{Fischer-Colbrie-Schoen,doCarmoPeng,Pogorelov}). This follows by combing e.g., \cite[Lemma A.1]{MeRo06} with the fact that stability extends across isolated points.} Finally, because the convergence occurs with multiplicity one, Allard's regularity theorem\footnote{The version proven in \cite{White:brakke} is also applicable here.} \cite{Allard:varifold} would imply that $\Sigma_{j}$ converged smoothly to $\widetilde\Lambda$ near $\cB_{\infty}$. This is not compatible with the definition of smooth blow-up set, so we see that $\widetilde\Lambda$ is a (non-empty) lamination of $\RR^{3}$ by parallel planes. This completes the proof.
\end{proof}

It is convenient to write  $\Sigma_{j}'$ for the union of components of $\Sigma_{j}\cap B_{2}(0)$ which contain at least one point in $\cB_{j}$, and $\Sigma_{j}''$ for the union of components of $\Sigma_{j}\cap B_{2}(0)$ which contain no points in $\cB_{j}$. Whenever $j$ is sufficiently large, these will represent, respectively, the neck and disk components of $\Sigma_j$.

\begin{lemm}\label{lem:disk-type-have-bd-curv}
Assume that $\Sigma_{j}$ are as in \hyperlink{defi:aleph}{$(\aleph)$}. The surfaces $\Sigma_{j}'' \subset B_{2}(0)$ have uniformly bounded curvature, i.e.
\[
\limsup_{j\to\infty} \sup_{x \in\Sigma_{j}''}|\sff_{\Sigma_{j}}|(x) < \infty.
\]
\end{lemm}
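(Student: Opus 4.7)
The plan is a proof by contradiction via a two-scale blow-up. Suppose for contradiction that there exist $x_{j}\in\Sigma_{j}''$ with $\mu_{j}:=|\sff_{\Sigma_{j}}|(x_{j})\to\infty$. Hypothesis $(\aleph)$(3) forces $d_{g_{j}}(x_{j},\cB_{j})\leq C/\mu_{j}\to 0$, so on a subsequence $x_{j}\to p_{\infty}\in\cB_{\infty}$; choose $p_{j}\in\cB_{j}$ realizing this distance (so $p_{j}\to p_{\infty}$) and set $\lambda_{j}:=|\sff_{\Sigma_{j}}|(p_{j})\to\infty$. I would rescale around $x_{j}$ at scale $\mu_{j}$, producing $\widehat\Sigma_{j}:=\mu_{j}(\Sigma_{j}-x_{j})$ with the origin a point of curvature $1$. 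The rescaled image $\tilde p_{j}:=\mu_{j}(p_{j}-x_{j})$ of $p_{j}$ has $|\tilde p_{j}|\leq C$, and on a further subsequence $\tilde p_{j}\to\tilde p_{\infty}$. At most $I$ points of $\mu_{j}(\cB_{j}-x_{j})$ may cluster to a finite set $\widetilde\cB_{\infty}\subset\RR^{3}$ containing $\tilde p_{\infty}$ (while the rest escape to infinity), and the rescaled form of hypothesis (3) gives locally uniformly bounded curvature for $\widehat\Sigma_{j}$ on $\RR^{3}\setminus\widetilde\cB_{\infty}$.

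Extracting a smooth subsequential limit $\widehat\cL$ on $\RR^{3}\setminus\widetilde\cB_{\infty}$, I would apply the removable singularity argument of Lemma~\ref{lemm:lam-lim-planes} (using Proposition~\ref{prop:remov-sing-two-sided-stab-lam}) to extend it to a smooth minimal lamination $\widehat{\widetilde\cL}$ of $\RR^{3}$, and then invoke Corollary~\ref{coro:limit-lam-struct} to classify $\widehat{\widetilde\cL}$ as either a lamination by parallel planes or a single non-flat properly embedded minimal surface of finite total curvature. The curvature $1$ at the origin rules out the planar case, so $\widehat{\widetilde\cL}=\{\widehat S\}$ for a single non-flat leaf $\widehat S$ through the origin.

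The decisive step is then to identify the limit $L'$ of the rescaled component $\mu_{j}(\Sigma_{j}'(p_{j})-x_{j})$. Smooth convergence $\lambda_{j}(\Sigma_{j}-p_{j})\to\overline\Sigma_{\infty}$ gives, on compact subsets of $\RR^{3}\setminus\{\tilde p_{\infty}\}$,
\[
\mu_{j}(\Sigma_{j}'(p_{j})-x_{j})=\tilde p_{j}+\tfrac{\mu_{j}}{\lambda_{j}}\overline\Sigma_{\infty}+o(1).
\]
Passing to a subsequence with $\mu_{j}/\lambda_{j}\to c\in[0,\infty]$: the case $c=\infty$ forces $y_{j}:=\lambda_{j}(x_{j}-p_{j})\to 0$ with rescaled curvature $\mu_{j}/\lambda_{j}\to\infty$, directly contradicting smooth convergence of $\overline\Sigma_{j}$ to $\overline\Sigma_{\infty}$ at $0$; the case $c\in(0,\infty)$ gives $L'=\tilde p_{\infty}+c\,\overline\Sigma_{\infty}$, a non-flat leaf of $\widehat{\widetilde\cL}$; and the case $c=0$ gives $L'$ equal to a plane through $\tilde p_{\infty}$, using Schoen's classification of finite-total-curvature ends \cite{Schoen:symmetry} applied to $\overline\Sigma_{\infty}$ (so the blow-down $(\mu_{j}/\lambda_{j})\overline\Sigma_{\infty}$ converges to the common asymptotic plane). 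Since $\widehat{\widetilde\cL}=\{\widehat S\}$ is a single leaf, one must have $L'=\widehat S$; in the case $c=0$ this contradicts $\widehat S$ being non-flat (whereas $L'$ is planar), and in the case $c\in(0,\infty)$ it forces the disjoint components $\Sigma_{j}'(p_{j})$ and $\Sigma_{j}''(x_{j})$ to converge to the same leaf with multiplicity, so by \cite[Lemma A.1]{MeRo06} together with the classification of complete stable minimal surfaces in $\RR^{3}$ \cite{Fischer-Colbrie-Schoen}, $\widehat S$ is flat, once again contradicting non-flatness.

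The main obstacle will be the $c=0$ case, where the blow-down analysis via Schoen's regularity-at-infinity theorem must be carried out carefully, and where one has to verify that the rescaled lamination $\widehat\cL$ extends smoothly across the (possibly multiple) cluster points of $\mu_j(\cB_j-x_j)$ inside $\widetilde\cB_{\infty}$. The two-scale bookkeeping—coordinating the smooth blow-up at $p_{j}$ at scale $\lambda_{j}$ with the new rescaling at $x_{j}$ at scale $\mu_{j}$—is the conceptual heart of the argument, and supplies the comparison leaf $L'$ needed to contradict the rigid structure imposed by the half-space theorem and Corollary~\ref{coro:limit-lam-struct}.
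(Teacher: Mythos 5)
The paper's proof is simpler than yours and sidesteps the main difficulty you run into. Here is the gap in your approach.

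You choose $x_j \in \Sigma_j''$ with $\mu_j := |\sff_{\Sigma_j}|(x_j) \to\infty$ but do \emph{not} insist that $x_j$ achieves (or nearly achieves) the supremum of $|\sff_{\Sigma_j}|$ over $\Sigma_j''$. This matters at the decisive step. In the rescaled picture $\widehat\Sigma_j = \mu_j(\Sigma_j - x_j)$, the nearest rescaled blow-up point $\tilde p_j = \mu_j(p_j - x_j)$ satisfies $|\tilde p_j|\leq C$, but nothing you have written excludes $\tilde p_j \to 0$. (Indeed, you establish that the only surviving case is $c := \lim \mu_j/\lambda_j = 0$, and since $\lambda_j d(x_j,p_j)\to\infty$ while $\mu_j/\lambda_j\to 0$, the product $\mu_j d(x_j,p_j)$ is genuinely indeterminate.) If $0\in\widetilde\cB_\infty$, the convergence $\widehat\Sigma_j\to\widehat\cL$ is \emph{not} smooth at the origin, so the fact that $|\sff_{\widehat\Sigma_j}|(0)=1$ is not inherited by the extended lamination $\widehat{\widetilde\cL}$. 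In that case Lemma~\ref{lemm:lam-lim-planes} (which you invoke) yields that $\widehat{\widetilde\cL}$ is a family of parallel \emph{planes}, not a single non-flat leaf; your conclusion ``curvature 1 at the origin rules out the planar case'' is simply not justified, and the entire $c=0$ analysis collapses since the plane $L'$ is then perfectly consistent with the limit.

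The fix is exactly the move the paper makes: take $z_j \in \Sigma_j''$ achieving the supremum $\lambda_j'' := \sup_{\Sigma_j''}|\sff_{\Sigma_j}|$. Then the rescaled disk components $\lambda_j''(\Sigma_j''-z_j)$ have $|\sff|\leq 1$ everywhere, so they converge smoothly (even near the origin, irrespective of where the rescaled blow-up points go) to a complete, non-flat, properly embedded finite-index surface $\overline{\Sigma_\infty''}$. Separately, $(\aleph)$(3) forces at least one rescaled blow-up point to remain at bounded distance, so the rescaled neck components $\lambda_j''(\Sigma_j'-z_j)$ converge to a non-empty lamination $\overline{\Lambda'}$. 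The half-space theorem (Corollary~\ref{coro:limit-lam-struct}) then forces $\overline{\Lambda'}$ and $\overline{\Sigma_\infty''}$ to coincide, giving multiplicity $\geq 2$, hence stability, hence flatness of $\overline{\Sigma_\infty''}$ --- a contradiction. Once you make the sup choice, your $c$-trichotomy becomes unnecessary: the cases $c=\infty$ and $c\in(0,\infty)$ are ruled out by elementary arithmetic ($\mu_j d(x_j,p_j)\leq C$ together with $\lambda_j d(x_j,p_j)\to\infty$ forces $\mu_j/\lambda_j\to 0$), so the whole argument reduces to the single step sketched above, and Schoen's regularity-at-infinity analysis of the blow-down is not needed.
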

\begin{proof}
After passing to a subsequence, suppose that $z_{j}\in \Sigma_{j}''$ satisfies
\[
|\sff_{\Sigma_{j}}|(z_{j}) = \sup_{x \in \Sigma_{j}''}|\sff_{\Sigma_{j}}|(x) := \lambda_{j}'' \to \infty.
\]
Then, 
\[
\overline{\Sigma_{j}''} : = \lambda_{j}''(\Sigma_{j}''-z_{j})
\]
will converge to a complete\footnote{By (3) in \hyperlink{defi:aleph}{$(\aleph)$} we see that for all $r>0$, for $j$ sufficiently large, $B_{r(\lambda_{j}'')^{-1}}(z_{j})\cap \partial\Sigma_{j}'' =\emptyset$. Note that this also guarantees that we can find such $z_{j}$.} non-flat properly embedded two-sided minimal surface $\overline{\Sigma_{\infty}
''}$ in $\RR^{3}$ with finite index (cf.\ Theorem \ref{theo:fin-index-imp-proper}). On the other hand, we claim that after passing to a subsequence,
\[
\overline{\Sigma_{j}'} : = \lambda_{j}''(\Sigma_{j}' -z_{j}) 
\]
converges away from some finite set of points $\overline{\cB}_{\infty}$ to a non-empty smooth lamination $\overline{\Lambda'}$ of $\RR^{3}\setminus \overline{\cB}_{\infty}$. The reason that $\overline{\Lambda'}$ is non-empty is that 
\[
\limsup_{j\to\infty} \min_{p\in\cB_{j}} \lambda_{j}''d_{g_{j}}(z_{j},p) < \infty
\]
by the curvature estimates assumed in \hyperlink{defi:aleph}{$(\aleph)$}. Thus, at least one point in the rescaled blow-up sets must remain at a bounded distance from the origin. Corollary \ref{coro:lam-min-pts-smooth-or-plane} and then Theorem \ref{theo:fin-index-imp-proper} imply that $\overline{\Lambda'}$ contains either a plane or properly embedded minimal surface with finite total curvature $\overline{\Sigma'_{\infty}}$. 

Because $\overline{\Sigma''_{\infty}}$ is non-flat, the half-space theorem (cf.\ Corollary \ref{coro:limit-lam-struct}) implies that $\overline{\Sigma''_{\infty}} = \overline{\Sigma'_{\infty}}$. However, this implies that $\lambda''_{j}(\Sigma_{j}-z_{j})$ limits to $\overline{\Sigma''_{\infty}}$ with multiplicity greater than one. As in the previous lemma, this contradicts the fact that $\overline{\Sigma''_{\infty}}$ is not flat.
\end{proof}

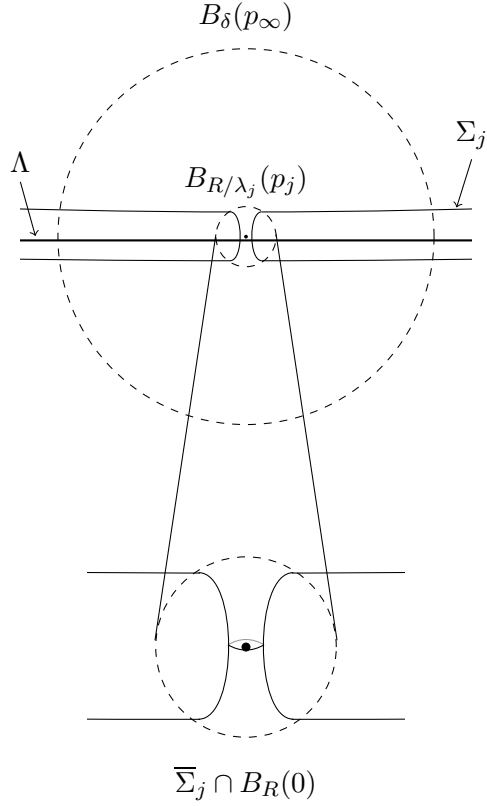
\begin{figure}
\begin{tikzpicture}
\clip (-4,-7.3) rectangle (4,4);

\begin{scope}
	\clip (-3,-4) rectangle (3,4);
	\draw (-5,0) to [bend right = 1] (5,0);
	\draw (-5,.7) to [bend right = 2] (5,.7);	
	\filldraw [white] (-.2,-.3) rectangle (.2,1);
	\draw (-.21,-.05) to [bend right=90, looseness = .7] (-.21,.6);
	\draw (.21,-.05) to [bend left=90, looseness = .7] (.21,.6);
	\filldraw (0,.27) circle (.5pt);
	\draw [dashed] (0,.27) circle (.4);
	\draw [dashed] (0,.27) circle (2.5);
	\draw [thick] (-5,.22) -- (5,.22);
\end{scope}
\begin{scope}[scale = 3, shift = {(0,-2)}]
	\clip (-.7,-1) rectangle (.7,1);
	\draw (-5,0) to [bend right = 1] (5,0);
	\draw (-5,.7) to [bend right = 2] (5,.7);	
	\filldraw [white] (-.2,-.3) rectangle (.2,1);
	\draw (-.21,-.05) to [bend right=90, looseness = .7] (-.21,.6);
	\draw (.21,-.05) to [bend left=90, looseness = .7] (.21,.6);
	\draw (-.08,.28) to [bend right = 30] (.08,.28);
	\draw [opacity = .4] (-.08,.28) to [bend left = 30] (.08,.28);
	\filldraw (0,.27) circle (.5pt);
	\draw [dashed] (0,.27) circle (.4);
\end{scope}

\draw(-.4,.3) -- (-1.21,-5.1);
\draw(.4,.3) -- (1.21,-5.1);

\node at (0,1) {$B_{R/\lambda_{j}}(p_{j}) $};
\node at (0,3.2) {$B_{\delta}(p_{\infty}) $};
\node at (0,-7) {$\overline\Sigma_{j} \cap B_{R}(0)$};

\draw [->] (-3,1) node [above] {$\Lambda$} -- (-2.8,.28);
\draw [->] (3,1.3) node [above] {$\Sigma_{j}$} -- (2.8,.66);

\end{tikzpicture}
\caption{An illustration of the proof of Proposition \ref{prop:one-point-conc}. The crucial estimate \eqref{eq:one-pt-curv-14bds} allows us to transfer topological information from the rescaled picture (on the bottom) to the original scale (on the top).} \label{fig:one-pt}
\end{figure}

We give an example to illustrate the behavior described in the following two propositions in Appendix \ref{app:exam-degen}. The main idea of the proof of Proposition \ref{prop:one-point-conc} is illustrated in Figure \ref{fig:one-pt}.

\begin{prop}[One point of curvature concentration]\label{prop:one-point-conc}
Suppose that $\Sigma_{j}$ satisfies \hyperlink{defi:aleph}{$(\aleph)$} with $|\cB_{j}| = 1$ for each $j$. Then, the lamination $\Lambda$ extends across $\cB_{\infty}$ to a smooth lamination $\widetilde\Lambda\subset \RR^{3}$. Moreover, for $j$ sufficiently large:
\begin{enumerate}[itemsep=5pt, topsep=5pt]
\item The surfaces $\Sigma_j ''$ are disks with uniformly bounded curvature. 
\item The surfaces $\Sigma_{j}'$ intersect $\partial B_{1}(0)$ transversely in at most $\frac{3}{2}(I+1)$ circles. 
\item The surfaces $\Sigma_{j}'$ have genus at most $\frac{3}{2}(I+1)$.
\item The surfaces $\Sigma_{j}'$ have uniformly bounded area, i.e.,
\[
\limsup_{j\to\infty}\area(\Sigma_{j}') < \infty.
\]
\end{enumerate}
After a rotation, $\widetilde\Lambda = \RR^{2}\times K$ for $K\subset \RR$ closed and $\Sigma_{j}'\cap B_{1}(0)$ smoothly converges away from $\cB_{\infty}$ to $B_{1}(0)\cap(\RR^{2}\times \{\eta\})$ with finite multiplicity, for some $|\eta|\leq \frac 12$. 
\end{prop}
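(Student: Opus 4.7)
The plan is to combine a blow-up at $\{p_j\}=\cB_j$ with the planar structure of $\widetilde\Lambda$ from Lemma \ref{lemm:lam-lim-planes}, using Lemma \ref{lemm:ann-decomp} to stitch the two scales together. I first apply Lemma \ref{lemm:lam-lim-planes} directly to extend $\Lambda$ across $\cB_\infty$ to a parallel-plane lamination $\widetilde\Lambda=\RR^2\times K$ after a rotation. Claim (1) is then immediate from Lemma \ref{lem:disk-type-have-bd-curv}: $\Sigma_j''$ has uniformly bounded curvature and converges locally smoothly (off $\cB_\infty$) to leaves of $\widetilde\Lambda$ (planes), so each component, containing no point of $\cB_j$, is for $j$ large a small graph over a planar disk, hence a topological disk with area bounded by $\pi(1+o(1))$ on $B_1(0)$.

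For $\Sigma_j'$, I rescale by $\lambda_j:=|\sff_{\Sigma_j}|(p_j)\to\infty$ around $p_j$ to extract a subsequential limit $\overline\Sigma_\infty\subset\RR^3$, a complete non-flat embedded minimal surface with $\Index(\overline\Sigma_\infty)\leq I$, properly embedded by Theorem \ref{theo:fin-index-imp-proper}, of finite total curvature by \cite{Fischer-Colbrie:1985}, and with genus and number of ends both at most $\frac{3}{2}(I+1)$ by the bounds of \cite{Ros:oneSided} (cf.\ \cite{ChMa14}). By Schoen's regularity-at-infinity result \cite{Schoen:symmetry}, the $e$ ends of $\overline\Sigma_\infty$ are graphical over parallel planes, and matching with $\widetilde\Lambda$ through smooth convergence of $\Sigma_j$ to $\Lambda$ off $p_\infty$ forces these planes to be parallel to $\RR^2\times\{0\}$. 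I then choose $R$ large enough that $\overline\Sigma_\infty$ meets $\partial B_R(0)$ transversely in $e\leq \frac{3}{2}(I+1)$ circles and $|\sff_{\overline\Sigma_\infty}|(y)|y|\leq 1/8$ for $|y|\geq R$; by smooth convergence of $\overline\Sigma_j:=\lambda_j(\Sigma_j-p_j)$ on compact subsets, the same estimate holds for $\overline\Sigma_j$ on $R\leq |y|\leq R_1$ for any fixed $R_1$ and $j$ large.

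I then invoke Lemma \ref{lemm:ann-decomp} in the original picture with $p=p_j$, inner radius $\tau=R/\lambda_j$, and $\Sigma=\Sigma_j'$. This requires the curvature estimate $|\sff_{\Sigma_j}|(x)d_{g_j}(x,p_j)\leq 1/4$ on $\Sigma_j\cap(\overline{B_1(0)}\setminus B_{R/\lambda_j}(p_j))$. On $d_{g_j}(x,p_j)\geq \tau_1>0$ for any fixed $\tau_1$ and $j$ large, smooth convergence of $\Sigma_j$ to the flat lamination $\Lambda$ gives $|\sff_{\Sigma_j}|(x)\to 0$ uniformly and yields the estimate; on the intermediate scale $R/\lambda_j \leq d_{g_j}(x,p_j)\leq \tau_1$, I argue by contradiction: rescaling any would-be bad point $x_j$ by $r_j:=d_{g_j}(x_j,p_j)$ and using $\lambda_j r_j\to\infty$ and $r_j\to 0$, the resulting subsequential limit is a piece of a flat surface wedged between planar ends of $\overline\Sigma_\infty$ (at infinity in the rescaled model) and a plane of $\widetilde\Lambda$ (at the blow-down), contradicting a lower curvature bound at the origin. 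With the estimate verified, Lemma \ref{lemm:ann-decomp} shows $\Sigma_j'\cap(\overline{B_1(0)}\setminus B_{R/\lambda_j}(p_j))$ is $e$ annuli and $\Sigma_j'\cap\partial B_1(0)$ is $e\leq \frac{3}{2}(I+1)$ transverse circles, proving (2). Claim (3) follows since all topology of $\Sigma_j'$ lies in $\Sigma_j'\cap B_{R/\lambda_j}(p_j)\cong \overline\Sigma_j\cap B_R(0)$, which converges to $\overline\Sigma_\infty\cap B_R(0)$ of genus at most $\frac{3}{2}(I+1)$; claim (4) follows by combining the uniformly bounded area of the $e$ planar annuli with the inner piece $\lambda_j^{-2}\area(\overline\Sigma_j\cap B_R(0))\to 0$, which is controlled by the monotonicity formula and finiteness of $\vol(\overline\Sigma_\infty\cap B_R(0))$. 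The identification of the limiting plane as $\RR^2\times\{\eta\}$ comes from $\eta=(p_\infty)_3\in K$ with $|\eta|\leq \tau_0<1/2$. The main technical obstacle is the intermediate-scale curvature estimate that stitches the rescaled and original pictures together, which is the crux of the scale-bridging argument.
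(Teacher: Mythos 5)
Your overall strategy is the same as the paper's: extend $\Lambda$ to a plane lamination via Lemma \ref{lemm:lam-lim-planes}, blow up at $\cB_j=\{p_j\}$ to produce $\overline\Sigma_\infty$ with genus and ends controlled by $\frac32(I+1)$, establish a scale-invariant curvature estimate $|\sff_{\Sigma_j}|d_{g_j}(\cdot,p_j)<\frac14$ on the intermediate annulus, and apply Lemma \ref{lemm:ann-decomp} to propagate topology from scale $R/\lambda_j$ out to scale $1$. Your treatments of (1), (3), (4) and of the identification of the limiting plane are all fine.

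The genuine gap is in the intermediate-scale curvature estimate, which you correctly identify as ``the crux of the scale-bridging argument'' but then dispatch in a single heuristic sentence. You rescale a putative bad point $x_j$ by $r_j=d_{g_j}(x_j,p_j)$ and assert that the limit is ``a piece of a flat surface wedged between planar ends of $\overline\Sigma_\infty$ \ldots and a plane of $\widetilde\Lambda$.'' This is not a proof; the curvature bound from $(\aleph)$ only gives $|\sff_{\hat\Sigma_j}|(\hat x_j)\cdot 1\leq C$, and flatness at scales going to zero and to infinity does not, by itself, force flatness at unit scale. Two things are missing. First, you write ``using $\lambda_j r_j\to\infty$,'' but this must be established: if $\lambda_j r_j$ stays bounded then $\hat\Sigma_j := r_j^{-1}(\Sigma_j-p_j)$ is a bounded rescaling of $\overline\Sigma_j$, the limit is a homothety of $\overline\Sigma_\infty$, and the curvature decay $|\sff_{\overline\Sigma_\infty}|(y)|y|<\frac14$ for $|y|\geq R$ already gives the contradiction; only if $\lambda_j r_j\to\infty$ does one reach the subtle case. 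Second, and more seriously, in the case $\lambda_j r_j\to\infty$ the paper's argument is a \emph{recursion}: one checks that $\hat\Sigma_j$ itself satisfies hypothesis $(\aleph)$ with smooth blow-up set $\{0\}$ (the blow-up of $\hat\Sigma_j$ at $0$ reproduces $\overline\Sigma_\infty$; the scale-invariant estimate from $(\aleph)$(3) passes to $\hat\Sigma_j$), then applies Lemma \ref{lemm:lam-lim-planes} to $\hat\Sigma_j$ to conclude the limit lamination away from $0$ consists of parallel planes, and finally invokes the inherited bound $|\sff_{\hat\Sigma_j}|(x)d_{\hat g_j}(x,0)\leq C$ to upgrade lamination convergence to smooth convergence away from $\{0\}$, so that $|\sff_{\hat\Sigma_j}|(\hat x_j)\to 0$. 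None of these steps appear in your sketch, and without them the claim that the intermediate-scale limit is flat and that the convergence is smooth near $\hat x_j$ is unsupported. Filling in this recursion would complete your proof.
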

\begin{proof}
Let us write $\cB_{j} =\{p_{j}\}$, $\cB_{\infty} = \{p_{\infty}\}$ and $\lambda_{j} : = |\sff_{\Sigma_{j}}|(p_{j})$. Lemma \ref{lemm:lam-lim-planes} shows that the limit lamination $\Lambda$ extends across $\cB_{\infty}$ to a lamination $\widetilde \Lambda$ by parallel planes. 

By the definition of a smooth blow-up set, after passing to a subsequence, the surfaces
\[
\overline \Sigma_{j} : = \lambda_{j} (\Sigma_{j}-p_{j})
\]
converge to $\overline\Sigma_{\infty}\subset \RR^{3}$, a complete, non-flat, properly embedded (and thus two-sided) minimal surface. It has index at most $I$. By\footnote{We remark that it is not strictly necessary to refer to \cite{ChMa14} here. Indeed, one could argue in a similar manner to the proof of Theorem \ref{theo:fin-top-type-Rn} to use our blow-up strategy along with the fact \cite{Fischer-Colbrie:1985} that ``a finite index surface in $\RR^{3}$ cannot have infinite genus'' to prove that there is $C=C(I)$ so that an embedded minimal surface in $\RR^{3}$ with index at most $I$ has at most $C(I)$ genus and ends. Referring to \cite{ChMa14} allows us to avoid such a discussion (and also allows us to obtain functions $m(I)$ and $r(I)$ in Theorem \ref{theo:neck} that are explicitly computable).} \cite{ChMa14}, the genus $g$ and number of ends $r$ of $\overline\Sigma_{j}$ are both bounded by $\frac{3}{2}(I+1)$. Choose $R>0$ so that $\overline\Sigma_{\infty}$ intersects $\partial B_{R}(0)$ transversely and
\begin{equation}\label{eq:one-pt-conc-choice-of-R}
|\sff_{\overline\Sigma_{\infty}}|(x)d_{\RR^{3}}(x,0) < \frac 1 4
\end{equation}
for $x \in \overline\Sigma_{\infty}\setminus B_{R}(0)$.

First, assume that there is $\delta > 0$ so that for $j$ sufficiently large,
\begin{equation}\label{eq:one-pt-curv-14bds}
|\sff_{\Sigma_{j}}|(x) d_{g_{j}}(x,p_{j}) < \frac 1 4
\end{equation}
for $x \in \Sigma_{j} \cap ( B_{\delta}(p_{j})\setminus B_{R/\lambda_{j}}(p_{j}))$. Because $\Sigma_{j}$ is converging away from $p_{\infty}$ to a lamination consisting of planes, this will immediately imply that for $j$ sufficiently large, \eqref{eq:one-pt-curv-14bds} actually holds for all $x \in \Sigma_{j}\cap (B_{2}(0)\setminus B_{R/\lambda_{j}}(p_{j}))$. From this, assertions (2) through (5) follow easily from Lemma \ref{lemm:ann-decomp} (note that we have assumed that $\cB_{j}\subset B_{\tau_{0}}(0)$ and $\tau_{0} < \frac 12$) and the above description of $\overline\Sigma_{\infty}$. 

It remains to prove the crucial fact that we can find $\delta >0$ so that \eqref{eq:one-pt-curv-14bds} holds for $x \in \Sigma_{j}\cap ( B_{\delta}(p_{j})\setminus B_{R/\lambda_{j}}(p_{j}))$. If this failed, we could find $z_{j} \in \Sigma_{j}\cap (B_{2}(0)\setminus B_{R/\lambda_{j}}(p_{j}))$ with $\delta_{j}:=d_{g}(z_{j},p_{j})\to 0$ and
\begin{equation}\label{eq:one-pt-conc-delta-j-contradiction-prop}
|\sff_{\Sigma_{j}}|(z_{j}) \delta_{j} \geq \frac 1 4.
\end{equation}
We now consider 
\[
\hat \Sigma_{j} = \delta_{j}^{-1}(\Sigma_{j}-p_{j}).
\]
and $\hat z_{j} = \delta_{j}^{-1}(z_{j}-p_{j})$. 

Note that the curvature of $\hat\Sigma_{j}$ at the origin cannot be uniformly bounded as $j\to\infty$, as otherwise $\hat\Sigma_{j}$ would limit to a homothety of $\overline\Sigma_{\infty}$. This would contradict the choice of $R$ and $z_{j}$, in particular \eqref{eq:one-pt-conc-choice-of-R}. Hence, $\hat\Sigma_{j}$ satisfies all of the hypotheses of the proposition (with blow-up set $\hat\cB_{j}=\{0\}$). By Lemma \ref{lemm:lam-lim-planes}, $\hat\Sigma_{j}$ converges subsequentially (away from $0$) to a lamination of $\RR^{3}$ by parallel planes. The (scale invariant) curvature estimates in \hyperlink{defi:aleph}{$(\aleph)$} guarantee that 
\[
|\sff_{\hat\Sigma_{j}}|(x) d_{\hat g_{j}}(x,0) \leq C
\]
for, e.g., $x \in B_{2}(0)\cap\hat\Sigma_{j}$. Thus, the convergence of $\hat\Sigma_{j}$ to the lamination by parallel planes takes place smoothly away from $\{0\}$.

 In particular, we find that $|\sff_{\hat\Sigma_{j}}|(\hat z_{j}) \to 0$ (since $\hat z_{j}$ remains a bounded distance away from $0$). This contradicts \eqref{eq:one-pt-conc-delta-j-contradiction-prop} after rescaling. Thus, \eqref{eq:one-pt-curv-14bds} holds for some $\delta > 0$, completing the proof. 
\end{proof}

\begin{prop}[Multiple points of curvature concentration]\label{prop:mult-point-conc}
There are functions $m(I)$ and $r(I)$ so that the following holds. Suppose that $\Sigma_{j}$ satisfies \hyperlink{defi:aleph}{$(\aleph)$}. Then, the lamination $\Lambda$ extends across $\cB_{\infty}$ to a smooth lamination $\widetilde\Lambda\subset \RR^{3}$. Moreover, for $j$ sufficiently large:
\begin{enumerate}[itemsep=5pt, topsep=5pt]
\item The surfaces $\Sigma_j ''$ are minimal disks of uniformly bounded curvature. 
\item The surfaces $\Sigma_{j}'$ intersect $\partial B_{1}(0)$ transversely in at most $m(I)$ circles. 
\item The surfaces $\Sigma_{j}'$ have genus at most $r(I)$.
\item The surfaces $\Sigma_{j}'$ have uniformly bounded area, i.e.,
\[
\limsup_{j\to\infty}\area(\Sigma_{j}') < \infty.
\]
\end{enumerate}
After a rotation, $\widetilde\Lambda = \RR^{2}\times K$ for $K\subset \RR$ closed and $\Sigma_{j}'\cap B_{1}(0)$ converges to $B_{1}(0)\cap (\RR^{2}\times \{\eta_{1},\dots,\eta_{n}\})$ with finite multiplicity for some $|\eta_{i}|\leq \frac 12$.
\end{prop}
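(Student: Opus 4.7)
The plan is to induct on $k := |\cB_{j}|$, with Proposition \ref{prop:one-point-conc} serving as the base case $k=1$. Assertion (1) follows immediately from Lemma \ref{lem:disk-type-have-bd-curv}, and the existence of a smooth extension $\widetilde\Lambda = \RR^{2}\times K$ is Lemma \ref{lemm:lam-lim-planes}; what remains is (2)--(4) together with the finite-multiplicity convergence of the neck pieces $\Sigma_{j}'\cap B_{1}(0)$ to finitely many horizontal planes with heights $\eta_{i}\in K\cap[-\tfrac{1}{2},\tfrac{1}{2}]$. Fix $k\ge 2$, assume the claim for every smaller number of blow-up points, and, after passing to a subsequence, let $\cB_{j}\to\cB_{\infty}$ in Hausdorff distance. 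The argument splits on $|\cB_{\infty}|$.

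First I would handle the case $|\cB_{\infty}|\ge 2$. Set $4\delta=\min_{p\neq q\in\cB_{\infty}}|p-q|$, shrunk if necessary so $\delta\le\tau_{0}$. For each $p_{\infty}^{i}\in\cB_{\infty}$ the cluster $\cB_{j}^{i}\subset\cB_{j}$ of points converging to $p_{\infty}^{i}$ has $|\cB_{j}^{i}|\le k-1$. The rescaled sequence $\delta^{-1}(\Sigma_{j}\cap B_{2\delta}(p_{\infty}^{i}) - p_{\infty}^{i})$ satisfies \hyperlink{defi:aleph}{$(\aleph)$} with strictly fewer blow-up points, so the inductive hypothesis yields genus, boundary-circle, and area bounds on each cluster. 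Outside $B_{\delta}(\cB_{\infty})$, the scale-invariant curvature bound in \hyperlink{defi:aleph}{$(\aleph)$} combined with the planar nature of $\widetilde\Lambda$ ensures $|\sff_{\Sigma_{j}}|(x)\,d(x,\cB_{j})<\tfrac{1}{4}$ on $\Sigma_{j}\cap(B_{1}(0)\setminus B_{\delta}(\cB_{\infty}))$ once $\delta$ is sufficiently small. Lemma \ref{lemm:ann-decomp}, applied in each $B_{2\delta}(p_{\infty}^{i})$ with the ``center'' of the cluster playing the role of $p$, shows that the intermediate annular regions are products $S^{1}\times[0,1]$, so summing the local data over the $|\cB_{\infty}|\le k$ clusters produces bounds depending only on $k$.

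Next I would address the case $|\cB_{\infty}|=1$. Writing $\cB_{\infty}=\{p_{\infty}\}$, set $4\delta_{j}=\min_{p\neq q\in\cB_{j}}d_{g_{j}}(p,q)\to 0$, pick $\tilde p_{j}\in\cB_{j}$, and consider
\[
\hat\Sigma_{j} := \delta_{j}^{-1}(\Sigma_{j}-\tilde p_{j}), \qquad \hat\cB_{j} := \delta_{j}^{-1}(\cB_{j}-\tilde p_{j}).
\]
After a harmless translation placing the limit points near the origin, $\hat\Sigma_{j}$ satisfies \hyperlink{defi:aleph}{$(\aleph)$} in Euclidean balls of radius $\sim \delta_{j}^{-1}r_{j}\to\infty$, with $|\hat\cB_{j}|=k$ but $|\hat\cB_{\infty}|\ge 2$ by construction. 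We are therefore reduced to the previous case applied to $\hat\Sigma_{j}$, which gives the bounds on $\Sigma_{j}\cap B_{\delta_{j}}(\tilde p_{j})$ in the original scale. To propagate these out to $B_{1}(0)$ I would mirror the argument of Proposition \ref{prop:one-point-conc}: if the annular curvature estimate $|\sff_{\Sigma_{j}}|(x)\,d_{g_{j}}(x,\tilde p_{j})<\tfrac{1}{4}$ failed on $\Sigma_{j}\cap(B_{1}(0)\setminus B_{\delta_{j}}(\tilde p_{j}))$, a point-picking at the intermediate scale would produce a new subsequence satisfying \hyperlink{defi:aleph}{$(\aleph)$} whose limit lamination is planar by Lemma \ref{lemm:lam-lim-planes}, contradicting the nonvanishing curvature at the selected point. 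Granting this estimate, Lemma \ref{lemm:ann-decomp} closes the gap from $B_{\delta_{j}}(\tilde p_{j})$ to $B_{1}(0)$ and transfers all of the bounds.

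The hard part will be the merged case: a single rescaling does not reduce $k$, so a naive induction fails to close. The trick is that such a rescaling strictly increases $|\cB_{\infty}|$, reducing immediately to the separated-limit case where each cluster has strictly fewer than $k$ points and the inductive hypothesis does apply. The technical heart of the proof is the iterated scale-breaking annular estimate, which rests on the scale-invariance of the bound in \hyperlink{defi:aleph}{$(\aleph)$} and on Lemma \ref{lemm:ann-decomp}, and which must be invoked at every level of the resulting hierarchy of scales.
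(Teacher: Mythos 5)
Your overall strategy mirrors the paper's: split on $|\cB_\infty|\ge 2$ vs.\ $|\cB_\infty|=1$, reduce the second case to the first by rescaling, and close via Lemma \ref{lemm:ann-decomp}. (You induct on $k=|\cB_j|$ rather than on the index bound $I$ as the paper does—the paper's observation is that with $|\cB_\infty|\ge 2$ each cluster already has $\Index\le I-1$—but since $k\le I$, both quantities decrease and either choice can be made to work.) There are, however, three concrete gaps in your implementation.

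First, in the $|\cB_\infty|\ge 2$ case you claim that $\delta^{-1}(\Sigma_j\cap B_{2\delta}(p_\infty^i)-p_\infty^i)$ satisfies \hyperlink{defi:aleph}{$(\aleph)$} for \emph{fixed} $\delta$. It does not: $(\aleph)$ requires surfaces living in balls $B_{r_j}(0)$ with $r_j\to\infty$, and a fixed rescaling keeps the radius pinned at $2$. To make the induction bite, the inductive hypothesis must be applied at a shrinking sequence of scales $\varepsilon_j\to 0$ (chosen so that $\cB_j\subset B_{\varepsilon_j\tau_0/2}(\cB_\infty)$ while $\varepsilon_j\min_p|\sff_{\Sigma_j}|(p)\to\infty$), after which $\varepsilon_j^{-1}(\Sigma_j\cap B_{\delta}(p_\infty^i)-p_\infty^i)$ lives in a ball of radius $\varepsilon_j^{-1}\delta\to\infty$.

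Second, and relatedly, once you invoke the inductive hypothesis only at scale $\varepsilon_j$, you cannot close the argument without first proving the scale-invariant curvature bound $|\sff_{\Sigma_j}|(x)d(x,\cB_\infty)<\tfrac14$ on the \emph{inner} annulus $\Sigma_j\cap(B_\delta(\cB_\infty)\setminus B_{\varepsilon_j}(\cB_\infty))$; your estimate on $B_1(0)\setminus B_\delta(\cB_\infty)$ (which does follow from smooth convergence to planes) is not enough to feed Lemma \ref{lemm:ann-decomp} across the hierarchy of scales. The contradiction argument you correctly describe in the $|\cB_\infty|=1$ case—pick a bad point $z_j$, rescale by $d(z_j,p_\infty)^{-1}$, apply the inductive hypothesis to see the limit is planar, contradict—must also be run in the $|\cB_\infty|\ge 2$ case to establish this inner estimate. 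Your write-up skips it there.

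Third, in the $|\cB_\infty|=1$ case you rescale by the \emph{minimum} pairwise distance. After this rescaling the closest pair of blow-up points lands at distance $4>2\tau_0$, and other pairs may be much farther still, so $\hat\cB_j\subset B_{\tau_0}(0)$—a requirement of $(\aleph)$—fails. You should rescale by the \emph{maximum} pairwise distance, normalized so the rescaled blow-up set has diameter $\tau_0/2$: then all $k$ points stay in $B_{\tau_0/2}(0)$ and still $|\hat\cB_\infty|\ge 2$, which is what the reduction actually needs.
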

\begin{proof}
We will induct on the index bound $I$ in \hyperlink{defi:aleph}{$(\aleph)$}. If $I = 1$, then the proposition follows from Proposition \ref{prop:one-point-conc} above. Now, assume that the proposition holds for $I-1$ and that $\Index(\Sigma_{j})\leq I$. Lemma \ref{lemm:lam-lim-planes} implies that the lamination $\Lambda$ extends across $\cB_{\infty}$ to $\widetilde\Lambda$ a lamination by parallel planes. 

We first consider the case that $|\cB_{\infty}|\geq 2$. Pick $\delta>0$ so that $\cB_{\infty}$ is $4\delta$-separated. In particular, $B_{\delta}(\cB_{\infty})$ is a disjoint union of balls and, after passing to a subsequence, we may assume that for any connected component $B$ of $B_{\delta}(\cB_{\infty})$,
\[
\Index(\Sigma_{j}\cap B) \geq 1.
\] 
Because we are assuming that $|\cB_{\infty}|\geq 2$, this implies that
\[
\Index(\Sigma_{j}\cap B)\leq I-1.
\]

Now, we choose $\varepsilon_{j}\to 0$ sufficiently slowly so that $\cB_{j}\subset B_{\varepsilon_{j}\tau_{0}/2}(\cB_{\infty})$ and
\[
\liminf_{j\to\infty}\varepsilon_{j} \min_{p\in\cB_{j}} |\sff_{\Sigma_{j}}|(p) = \infty.
\] 
We claim that (after taking $\delta>0$ smaller if necessary) for $j$ large we have
\begin{equation}\label{eq:multi-pt-curv-14bds}
|\sff_{\Sigma_{j}}|(x) d_{g_{j}}(x,\cB_{\infty}) < \frac 1 4
\end{equation}
for $x \in \Sigma_{j}\cap (B_{\delta}(\cB_{\infty})\setminus B_{\varepsilon_{j}}(\cB_{\infty}))$. If this were to fail, then we may argue as in the one point case,\footnote{Observe that things are slightly different than the one point case. In this situation, we work away from the ball $B_{\varepsilon_{j}}(p_{\infty})$ with $p_{\infty}$ fixed, rather than the ball $p_{j}$ as in the one-point case. We must do this to handle the possibility that multiple points in $\cB_{j}$ are converging to the single point $p_{\infty}$.} Proposition \ref{prop:one-point-conc}: after passing to a subsequence, we may choose $p_{\infty}\in\cB_{\infty}$ and $z_{j}\in \Sigma_{j}\cap B_{\delta}(p_{\infty})$ with $z_{j}\to p_{\infty}$, so that 
\[
|\sff_{\Sigma_{j}}|(z_{j})d_{g}(z_{j},p_{\infty}) \geq \frac 1 4,
\] 
for $\delta_{j} = d_{g}(z_{j},p_{\infty})\to 0$. The surfaces
\[
\delta_{j}^{-1}(\Sigma_{j}\cap B_{\delta}(p_{\infty}) - p_{\infty}) 
\]
satisfy the inductive hypothesis and have unbounded curvature. Hence, after passing to a subsequence, they converge to a lamination of $\RR^{3}$ by parallel planes (smoothly near $\partial B_{1}(0)$), contradicting the choice of $z_{j}$. 

Because we now know that \eqref{eq:multi-pt-curv-14bds} holds, we are able to transfer topological information from the scale $B_{\varepsilon_{j}}(\cB_{\infty})$ (where we may apply the inductive hypothesis, by choice of $\varepsilon_{j}$) to the scale $B_{\delta}(\cB_{\infty})$, using Lemma \ref{lemm:ann-decomp}. In particular, we see that any component of $\Sigma_{j}\cap B_{\delta}(\cB_{\infty})$ containing some point in $\cB_{j}$ intersects $B_{\delta}(\cB_{\infty})$ transversely in at most $m(I-1)$ circles, has genus at most $r(I-1)$. On the other hand, observe that $\Sigma_{j}\cap (B_{1}(0)\setminus B_{\delta/2}(\cB_{\infty}))$ is converging smoothly to the planar domains $\widetilde \Lambda \cap (B_{1}(0) \setminus B_{\delta/2}(\cB_{\infty}))$.

From this, it is not hard to check that $\Sigma_{j}'\cap(B_{1}(0)\setminus B_{\delta}(\cB_{\infty}))$ satisfies the hypothesis of Lemma \ref{lemm:adding-genus-ends}, which yields the asserted bounds on the genus and number of boundary components of $\Sigma_{j}'$. Combined with fact that $\widetilde\Lambda$ is a lamination by parallel planes, this also yields the asserted area bounds, so have proven assertions (2)-(5). Finally, assertion (1) follows from Lemma \ref{lem:disk-type-have-bd-curv} and the fact that $\tilde\Lambda$ consists of parallel planes. This completes the proof in the $|\cB_{\infty}|\geq 2$ case.

Thus, it remains to consider the case that $|\cB_{\infty}|=1$. Passing to a subsequence, we may assume that $|\cB_{j}|\geq 2$ for each $j$, as otherwise we could apply Proposition \ref{prop:one-point-conc}. Then, we may choose $p_{j},q_{j}\in \cB_{j}$ so that
\[
\varepsilon_{j}\tau_{0}/2 : = d_{g_{j}}(p_{j},q_{j}) = \max_{\substack{p,q \in \cB_{j}\\ p\not=q}} d_{g_{j}}(p,q) \to 0.
\]
Then, consider the sequence
\[
\overline\Sigma_{j} := \varepsilon_{j}^{-1}(\Sigma_{j}-p_{j})
\]
By definition of a sequence of smooth blow-up sets (i.e., the various points cannot appear in the blow-up of the other points), the curvature must still be blowing up at each point in $\overline\cB_j=\varepsilon_{j}^{-1}(\cB_{j}-p_{j})$. Thus $\overline\Sigma_{j}$ satisfies the hypothesis of the proposition with $|\overline\cB_{\infty}|\geq 2$, so the conclusion of the proposition holds for $\overline\Sigma_{j}$. At this point, we may argue as above (cf.\ the analogous argument in the proof of Proposition \ref{prop:one-point-conc}), establishing the curvature estimate \eqref{eq:multi-pt-curv-14bds} for $x\in \Sigma_{j}\cap (B_{\delta}(p_{j})\setminus B_{\varepsilon_{j}}(p_{j}))$ for some $\delta > 0$. As before, this allows us to remove the singularities in the limit lamination and conclude that it must be a lamination by planes. Using this, we may readily transfer the topological information out to the scale of $B_{1}(0)$ for $\Sigma_{j}$ using Lemma \ref{lemm:ann-decomp} and as before conclude assertions (1) through (5).
\end{proof}

\subsection{Completing the proof of Theorem \ref{theo:neck}} Assume that $\Sigma_{j}\subset (M^{3},g)$ is a sequence of closed embedded minimal surfaces with 
\[
\Index(\Sigma_{j})\leq I.
\]
Passing to a subsequence, Corollary \ref{coro-curv-bds} yields a sequence of smooth blow-up sets $\cB_{j}$ so that $|\cB_{j}|\leq I$ and a constant $C>0$ so that 
\[
|\sff_{\Sigma_{j}}|(x)\min\{1,d_{g}(x,\cB_{j})\} \leq C.
\]
Passing to a further subsequence, $\cB_{j}$ converges to a finite set of points $\cB_{\infty}$ and $\Sigma_{j}$ converges away from $\cB_{\infty}$ to a lamination $\cL \subset M \setminus \cB_{\infty}$. The remaining argument is very similar to the proof of Proposition \ref{prop:mult-point-conc}, so we omit some of the details below. Arguing as in Lemma \ref{lemm:lam-lim-planes}, the lamination $\cL$ extends across $\cB_{\infty}$ to a smooth lamination $\widetilde\cL\subset M$. 

Choose $\varepsilon_{j}\to 0$ sufficiently slowly so that $\cB_{j}\subset B_{\varepsilon_{j}\tau_{0}/2}(\cB_{\infty})$ and
\[
\liminf_{j\to\infty}\varepsilon_{j} \min_{p\in\cB_{j}} |\sff_{\Sigma_{j}}|(p) = \infty.
\]
We claim that by taking $\varepsilon_{0}>0$ sufficiently small (in particular, so that it is smaller than the injectivity radius and so that $\cB_{\infty}$ is $4\varepsilon_{0}$-separated), for $j$ large, we have the improved curvature bounds
\begin{equation}\label{eq:14-curv-bds-final-pf-neck}
|\sff_{\Sigma_{j}}|(x) d_{g}(x,\cB_{\infty}) < \frac 14
\end{equation}
for $x\in \Sigma_{j}\cap (B_{2\varepsilon_{0}}(\cB_{\infty})\setminus B_{\varepsilon_{j}}(\cB_{\infty}))$. To prove this we argue exactly as before: we may pick a connected component $\Sigma_{j}\cap (B_{2\varepsilon_{0}}(\cB_{\infty})\setminus B_{\varepsilon_{j}}(\cB_{\infty}))$ where it fails and rescale a sequence of points where where \eqref{eq:14-curv-bds-final-pf-neck} fails to unit scale. This rescaled sequence then satisfies the hypothesis of Proposition \ref{prop:mult-point-conc}, so it limits to a lamination of $\RR^{3}$ by parallel planes (away from a discrete set). This contradicts the fact that we chose points volating \eqref{eq:14-curv-bds-final-pf-neck}. Thus, we may find some $\varepsilon_{0}>0$ as claimed. Taking $\varepsilon_{0}>0$ even smaller if necessary, we may arrange that for every component $B$ of $B_{2\varepsilon_{0}}(\cB_{\infty})$, the metric $g$ restricted to $B$ and rescaled by by $\varepsilon_{0}^{-1}$ around its center satisfies the hypothesis in Lemma \ref{lemm:ann-decomp}.

Now, Propositions \ref{prop:one-point-conc} and \ref{prop:mult-point-conc} applied to each component $B$ of $\Sigma_{j}\cap B_{\varepsilon_{j}}(\cB_{\infty})$, after rescaling it by $\varepsilon_{j}^{-1}$ around the center of $B$ yields the desired topological information at the scale of $B_{\varepsilon_{j}}(\cB_{\infty})$. The improved curvature estimates in \eqref{eq:14-curv-bds-final-pf-neck} and Lemma \ref{lemm:ann-decomp} then allow us to transfer this information out to the scale of $B_{\varepsilon_{0}}(\cB_{\infty})$, exactly as in the proof of Propositions \ref{prop:one-point-conc} and \ref{prop:mult-point-conc}. In particular, topological statements (1.a), (1.b), (1.c) follow, and also (2.a) by Lemma \ref{lem:disk-type-have-bd-curv}. 

Finally, fix $\varepsilon \in (0,\varepsilon_0]$ and $B = B_{\varepsilon_{0}}(p_{\infty})$ a connected component of $B_{\varepsilon_{0}}(\cB_{\infty})$. Because $\widetilde\cL$ is smooth in $B$, as $\varepsilon \to 0$, each leaf in $\varepsilon^{-1}(\widetilde\cL \cap B - p_{\infty})$ converges with multiplicity one to a plane in $\RR^{3}$. Rotating a local coordinate frame, we may assume that all such planes are of the form $\RR^{2}\times \{t\}$ for some $t\in \RR$.

 Thus, by (1.a) we can see that for $j$ sufficiently large (depending on $\varepsilon$) any component of $\Sigma_{j}\cap B_{2\varepsilon}(p_{\infty})$ must intersect $B_{\varepsilon}(p_{\infty})\setminus B_{\varepsilon/2}(p_{\infty})$ union of at most $m(I)$ annuli, which converge graphically to the annulus $\left( \RR^{2}\times \{0\} \right) \cap \left( B_{\varepsilon}(p_{\infty})\setminus B_{\varepsilon/2}(p_{\infty}) \right)$. Combined with the monotonicity formula, the area estimate (1.d) easily follows. The argument for (2.b) follows a similar line of reasoning, except any disk region is converging smoothly everywhere to a leaf in $\widetilde\cL \cap B$, which is nearly planar on small scales.

\section{Surgery for bounded index surfaces in three-manifolds}\label{sec:surg-3-mflds}

In this section, we describe how Corollary \ref{coro:snip} follows from Theorem \ref{theo:neck}. We first prove the following local description of the surgery operation. 
\begin{prop}[Local picture of surgery]\label{prop:local-surg}
Suppose that $\Gamma_{j}$ is a sequence of embedded surfaces in $B_{3}(0)$ with $\partial\Gamma_{j}\subset \partial B_{3}(0)$, and so that:
\begin{enumerate}[itemsep=5pt, topsep=5pt]
\item The surfaces $\Gamma_{j}\setminus \overline {B_{1}(0)}$ converge smoothly, with finite multiplicity, to the flat annulus $A(3,1) : = \left(B_{3}(0)\setminus\overline{B_{1}(0)}\right)\cap\{x^{3}=0\}$ as $j\to\infty$. 
\item The set of components of $\Gamma_{j}$ which are topological disks converge smoothly to the flat disk $D(3) : = B_{3}(0)\cap\{x^{3}=0\}$ as $j\to\infty$. 
\end{enumerate}
Then, for $j$ sufficiently large, we may construct embedded surfaces $\widetilde\Gamma_{j}$ with $\partial\widetilde\Gamma_{j}\subset \partial B_{3}(0)$, and so that:
\begin{enumerate}[itemsep=5pt, topsep=5pt]
\item The surfaces $\widetilde \Gamma_{j}$ agree with the $\Gamma_{j}$ near $\partial B_{3}(0)$.
\item Any component of $\Gamma_{j}$ which is topologically a disk is unchanged.
\item The surfaces $\widetilde\Gamma_{j}$ converge smoothly, with finite multiplicity, to the flat disk $D(3)$ as $j\to\infty$. 
\end{enumerate}
\end{prop}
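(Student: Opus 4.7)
The plan is to replace the complicated interior of each non-disk component of $\Gamma_j$ with a smooth graphical cap inside $B_{3/2}(0)$, while preserving the disk components and the part of $\Gamma_j$ near $\partial B_3(0)$. The key technical point will be arranging the caps to interleave, in an order-preserving way, with the unchanged disk components.

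First, I would extract the graphical structure. By hypothesis (1), for $j$ large enough, $\Gamma_j \cap (\overline{B_3}\setminus B_{3/2})$ is a disjoint union of $N$ strictly $x_3$-ordered graphs $\{x_3 = u_j^{(1)}(x')\}<\dots<\{x_3=u_j^{(N)}(x')\}$ over the flat annulus, with each $u_j^{(i)} \to 0$ smoothly and $N$ eventually constant in $j$. By hypothesis (2) a subset $\mathcal D_j \subset \{1,\ldots,N\}$ of these sheets extends to graphs of disk components $d_j^{(k)}:B_3\to \RR$ with $d_j^{(k)} \to 0$ smoothly, which I will preserve. Because $\Gamma_j$ is embedded and distinct disk graphs cannot cross, any two disk graphs with $d_j^{(k_1)}<d_j^{(k_2)}$ on $\partial B_{3/2}$ remain strictly ordered on all of $B_3$; compactness of $\overline B_{3/2}$ then yields a strictly positive (though $j$-dependent) gap.

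Next, for each non-disk index $i \notin \mathcal D_j$ I would construct a smooth extension $\tilde u_j^{(i)}:B_3 \to \RR$ that equals $u_j^{(i)}$ for $|x|\geq 2$. Fix a smooth cutoff $\chi: [0,\infty) \to [0,1]$, with $\chi \equiv 1$ on $[2,\infty)$ and $\chi \equiv 0$ on $[0, 3/2]$, vanishing to infinite order at $r=3/2$ so that patched formulas are $C^\infty$ across $\partial B_{3/2}$. Define
\[
\tilde u_j^{(i)}(x) := \chi(|x|)\, u_j^{(i)}(x) + \big(1-\chi(|x|)\big)\, w_j^{(i)}(x),
\]
where $w_j^{(i)}$ is a target on $B_3$ chosen as follows. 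Let $k_-(i),k_+(i)\in \mathcal D_j$ be the nearest disk-component indices below and above $i$; if none exists on a given side, use an offset $\pm \varepsilon_j\to 0$ from the extreme disk graph (or a fixed constant if $\mathcal D_j=\emptyset$). Take $w_j^{(i)}$ to be the convex combination of $d_j^{(k_-(i))}$ and $d_j^{(k_+(i))}$ whose weights are spaced evenly among the non-disk indices between $k_-(i)$ and $k_+(i)$, so that $i\mapsto w_j^{(i)}$ is strictly increasing pointwise on $B_3$. Then set $\widetilde\Gamma_j$ to be the union of the preserved disks $\{x_3 = d_j^{(k)}(x')\}$ and the graphs of the $\tilde u_j^{(i)}$ on $B_3$.

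Verifying the conclusions: property (1) is immediate because $\tilde u_j^{(i)} \equiv u_j^{(i)}$ on $\{|x|\geq 2\}$; property (2) is built in; for property (3), each $\tilde u_j^{(i)}$ is a convex combination of functions converging smoothly to $0$, hence does so itself, giving smooth convergence of $\widetilde\Gamma_j$ to $D(3)$ with multiplicity $N$. The main obstacle is global embeddedness of $\widetilde\Gamma_j$, in particular that the newly inserted caps avoid the preserved disk graphs inside $B_{3/2}$ (where we cannot move the latter). The convex-interpolation construction addresses this: pointwise on $B_3$ one has $d_j^{(k_-(i))}<w_j^{(i)}<d_j^{(k_+(i))}$ by the strict ordering of the disk graphs, and $w_j^{(i_1)}<w_j^{(i_2)}$ for $i_1<i_2$ between the same pair of disk indices by the chosen spacing; in the interpolation region $\{3/2\leq|x|\leq 2\}$ the map $i\mapsto \chi u_j^{(i)}+(1-\chi)w_j^{(i)}$ preserves strict monotonicity because each of its inputs is strictly monotone in $i$ and $\chi(|x|)\in [0,1]$. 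This gives a global strict ordering of all graphs comprising $\widetilde\Gamma_j$, hence embeddedness.
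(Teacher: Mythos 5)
Your proof is correct and takes essentially the same approach as the paper: cut off each non-disk sheet near $\partial B_3(0)$ with a radial bump function and interpolate to a graphical cap chosen so that the vertical order of all sheets (including the retained disk graphs) is preserved pointwise, which gives embeddedness, while the cap functions converge smoothly to $0$, which gives the asserted convergence to $D(3)$. The only difference is organizational: the paper first handles the case of at most two disk components sitting at the top and bottom, with a single target $w_j$ (e.g.\ the average of the two disk graphs) plus equally spaced vertical offsets $\tfrac{l}{n(j)}\eta_j$, and then reduces the general case by partitioning the sheets into maximal contiguous runs separated by disks, whereas you build the per-sheet target $w_j^{(i)}$ directly as a convex combination of the nearest disk graphs above and below, which is the same interpolation written out in one pass.
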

\begin{proof}
Fix a smooth cutoff function $\chi:\RR^{2}\to [0,1]$, with $\chi(x) \equiv 1$ for $|x|\geq \frac 74$ and $\chi(x) \equiv 0$ for $|x|\leq \frac 54$.

We define the cylinder and annular cylinder 
\begin{align*}
C(r) &: = \{(x^{1},x^{2},x^{3}:(x^{1})^{2}+(x^{2})^{2} < r^{2}\}, \quad r>0,\\
C(r_{1},r_{2}) &: = C(r_{1})\setminus \overline{C(r_{2})},\quad r_1>r_2>0.
\end{align*}
Taking $j$ sufficiently large, each component of $\Gamma_{j}\cap C(2,1)$ is graphical over the flat annulus $A(2,1)$, and the topological disk components of $\Gamma_{j}\cap C(2)$ are graphical over the flat disk $D(2)$.

For now, we assume at most two of the components of $\Gamma_{j}$ are topological disks, and each of the disk components, if they exist, is either the topmost component or bottommost component. Choose a smooth function $w_{j}: D(2) \to \RR$ so that 
\begin{enumerate}[itemsep=5pt, topsep=5pt]
\item The graph of $w_{j}$ is contained in $B_{3}(0)$.
\item The graph of $w_{j}$ lies strictly above (resp.\ below) the bottommost (resp.\ uppermost) disk if it exists.
\item The function $w_{j}$ converges smoothly to $0$ as $j\to\infty$. 
\end{enumerate}
For example, when $\Gamma_{j}$ contains both an uppermost and bottommost disk, then we may take the average of their respective graphs. We additionally choose real numbers $\eta_{j}\to 0$ so that the graph of $w_{j}+\eta$ satisfies the above properties as well for all $\eta \in (0,\eta_{j})$.

We may find functions $u_{j,1},\dots,u_{j,n(j)}:A(2,1) \to \RR$ so that any non-disk component of $\Gamma_{j}$ is the graph of the $u_{j,l}$ in $C(2,1)$. By assumption, for all $k$,
\[
\sup_{l \in \{1,\dots,n(j)\}} \Vert u_{j,l}\Vert_{C^{k}(A(2,1))} \to 0
\] 
as $j\to\infty$. By embeddedness of $\Gamma_{j}$, we may arrange that
\[
u_{j,1}(x) < u_{j,2}(x) < \dots < u_{j,n(j)}(x)
\]
for $x \in A(2,1)$. 

Now, we define
\[
\widetilde u_{j,l}(x) = \chi(x) u_{j,l}(x) + (1-\chi(x))\left( w_{j}(x) + \frac{l}{n(j)} \eta_{j} \right)
\]
We now define a surface $\widetilde\Gamma_{j}$ which agrees with $\Gamma_{j}$ in $B_{3}(0)\setminus C(2)$ and which is defined inside of $C(2)$ to be the union of the graphs of the $\widetilde u_{j,l}$ along with the disk components in $\Gamma_{j}$, if they exist. It is easy to check that $\widetilde\Gamma_{j}$ satisfies the asserted properties. 

Finally, we may easily reduce the case of general $\Gamma_{j}$ to the above case by considering contiguous subsets of the components of $\Gamma_{j}$ which are in the above form and applying the argument above to the maximal such subsets. This choice will preserve embeddedness, because we have chosen them so that there will at least be a disk separating the non-disk components of different subsets. 
\end{proof}

Now, we may complete the proof of the surgery result. Consider $\Sigma_{j}$ a sequence of compact embedded minimal surfaces in $(M^{3},g)$ with $\Index(\Sigma_{j})\leq I$. We pass to a subsequence so that the conclusion of Theorem \ref{theo:neck} applies. In particular, there is a finite  set of points $\cB_{\infty}\subset M$ with $|\cB_{\infty}|\leq I$ and a smooth lamination $\widetilde\cL$ of $M$ so that $\Sigma_{j}$ converges to $\cL=\widetilde\cL\setminus\cB_{\infty}$ away from $\cB_{\infty}$.

Take $\varepsilon_{0}$ as in Theorem \ref{theo:neck} and choose $\varepsilon \in (0,\varepsilon_{0}]$. Pick any $p\in\cB_{\infty}$; we will show how to perform the surgery in $B_{\varepsilon}(p)$. Write $L$ for the leaf of $\widetilde \cL \cap B_{\varepsilon}(p)$ that passes through $p$. We may fix a diffeomorphism of $\Psi: B_{\varepsilon}(p) \to B_{3}(0)\subset \RR^{3}$ so that $\Psi$ maps $B_{\varepsilon/3}(0)$ difeomorphically onto $B_{1}(0)$ and $L$ onto the flat disk $D(3)\subset \RR^{3}$ as in Proposition \ref{prop:local-surg}. 

Consider the connected components of $\Sigma_{j}\cap B_{\varepsilon}(p_{i})$ which are converging smoothly to $L$ in the annulus $B_{\varepsilon}(p)\setminus \overline{B_{\varepsilon/3}(p)}$ (by Theorem \ref{theo:neck}, this includes all of the neck components, i.e., all of the components of $\Sigma_{j}\cap B_{\varepsilon}(p)$ containing some point in $\cB_{j}$). Using the maximum principle, the area bounds and curvature estimates for the disk components show that they converge smoothly to $L$ (although they might do so with infinite multiplicity). Now, we define $\Gamma_{j}$ to be the union of all of the neck components of $\Sigma_{j}\cap B_{\varepsilon}(p)$, as well as all of the disc components which are directly adjacent (either above or below) to a neck component. 

It is not hard to see that if the uppermost (resp.\ lowermost) component of $\Gamma_{j}$ is a neck component, we may simply add in a disk which is above (resp.\ below) all of the components of $\Gamma_{j}$, but which is below (resp.\ above) all of the disk components not converging to $L$. 

Now we apply Proposition \ref{prop:local-surg} to $\Gamma_{j}$ (and then removing the extra disks on top and bottom, if we had to add them) and replace $\Sigma_{j}\cap B_{\varepsilon}(p)$ by the resulting surface. Repeating this for each $p\in\cB_{\infty}$ yields $\widetilde\Sigma_{j}$. The asserted properties of $\widetilde\Sigma_{j}$ follow easily from Proposition \ref{prop:local-surg} and Theorem \ref{theo:neck}.

\section{Proofs of the three-dimensional compactness results}\label{sec:3-d-compactness}

\begin{proof}[Proof of Theorem \ref{theo:fin-top-type-Mn} for $n=3$]

Fix $I \in \NN$, $A<\infty$, and a closed Riemannian three-manifold $(M,g)$. Suppose that $\Sigma_{j}\subset (M,g)$ is a sequence of connected, embedded, closed minimal surfaces with $\Index(\Sigma_{j})\leq I$ and $\area(\Sigma_{j})\leq A$ but $\genus(\Sigma_{j})\to \infty$. By\footnote{Note one could also prove Theorem \ref{theo:fin-top-type-Mn} for $n=3$ using Theorem \ref{theo:neck} directly (somewhat like we will do for Theorems \ref{theo:fin-top-type-Mn} or \ref{theo:fin-top-type-Rn}).} Corollary \ref{coro:snip}, we may find $\widetilde\Sigma_{j}$ with uniformly bounded area and curvature, but so that 
\[
\genus(\widetilde\Sigma_{j}) \geq \genus(\Sigma_{j}) - \tilde r(I) \to \infty.
\]
This is a contradiction: after passing to a subsequence, the surfaces $\widetilde\Sigma_{j}$ must converge smoothly and with finite multiplicity to some closed, embedded minimal surface $\widetilde\Sigma_{\infty}$ (which must have finite genus). 
\end{proof}

\begin{proof}[Proof Theorem \ref{theo:area-genus-bd-PSC}]
Fix $I \in \NN$ and $(M,g)$ a closed Riemannian three-manifold with positive scalar curvature. We only need to prove the area bound, since the genus bound would immediately follow from Theorem \ref{theo:fin-top-type-Mn} (the case $n=3$ is proven above). Suppose that $\Sigma_{j}\subset (M,g)$ is a sequence of connected, closed, embedded minimal surfaces with $\Index(\Sigma_{j}) \leq I$ and $\area_{g}(\Sigma_{j})\to\infty$. 

After passing to a subsequence, by Theorem \ref{theo:neck}, there is a finite set of points $\cB_{\infty}$ and a lamination $\widetilde\cL\subset M$ so that $\Sigma_{j}$ converges locally to the lamination $\cL:=\widetilde\cL\setminus \cB_{\infty}$ away from $\cB_{\infty}$. Because the area of $\Sigma_{j}$ is diverging, passing to a further subsequence, there is $p \in M\setminus \cB_{\infty}$ so that 
\[
\liminf_{j\to\infty} \area_{g}(\Sigma_{j}\cap B_{r}(p)) = \infty. 
\]
for all $r>0$. A standard argument along the lines of \cite[Lemma 1.1]{MeRo05}, \cite[Lemma A.1]{MeRo06}, and \cite[Proposition 2.1]{CCE} shows that there is a leaf $p \in L \subset \cL$ with stable universal cover and so that for $r>0$ fixed sufficiently small, $\Sigma_{j}\cap B_{r}(p)$ consists of $n(j)\to\infty$ sheets, which are all smoothly graphically converging to $L\cap B_{r}(p)$. 

Because $\cL=\widetilde\cL\setminus\cB_{\infty}$ has removable singularities, there is a smooth complete minimal surface $\widetilde L$ so that $L = \widetilde L \setminus \cB_{\infty}$. The log-cutoff trick shows that stability extends across isolated points, so $\widetilde L$ has stable universal cover $\widehat L$. We  think of $\widehat L$ as an immersed stable minimal surface in $M$. If we consider a disk $D \subset \widehat L$ and if $x$ is any point in the interior of $D$, by Schoen-Yau \cite{ScYa82,ScYa83}, the intrinsic distance to the boundary must satisfy: 
\[d_{D}(x,\partial D)\leq \frac{2\pi\sqrt{2}}{\sqrt{3\kappa_0}},\]
where $\kappa_0>0$ is the infimum of the scalar curvature of $M$. This implies that $\widehat L$ must be compact, since $D$ is arbitrary. By \cite[Theorem 3]{Fischer-Colbrie-Schoen}, $\widehat L$ is a two-sphere. 

We choose $\varepsilon>0$ smaller than $\varepsilon_{0}$ from the surgery theorem and small enough so that $p \not \in \cB_{2\varepsilon}(\cB_{\infty})$. Let $\widetilde\Sigma_{j}$ denote the surfaces resulting from a surgery at scale $\varepsilon$, as constructed in Corollary \ref{coro:snip}. Because the original surfaces $\Sigma_{j}$ are connected, Corollary \ref{coro:snip} implies that the number of components of $\widetilde\Sigma_{j}$ is uniformly bounded above, $|\pi_{0}(\widetilde\Sigma_{j})| \leq m(I) + 1$.

Putting these facts together, we may find a connected component $\widehat \Sigma_{j} \subset \widetilde\Sigma_{j}$ so that $\area_{g}(\widehat\Sigma_{j})\to\infty$ and so that $\widehat\Sigma_{j}\cap B_{\varepsilon}(p)$ is smoothly converging to $L \cap B_{\varepsilon}(p)$. The maximum principle then implies that $\widehat\Sigma_{j}$ converges locally smoothly to $\widetilde L$. In particular, the universal cover of $\widehat\Sigma_{j}$ converges in the sense of immersions to $\widehat L$, which we have seen is a topological sphere. This implies that the area of $\widehat\Sigma_{j}$ is uniformly bounded, a contradiction.  
\end{proof}

\section{Bounded diffeomorphism type in higher dimensions}\label{sec:high-dim}

Here, we discuss the $4\leq n \leq 7$ case of Theorems \ref{theo:fin-top-type-Mn} and \ref{theo:fin-top-type-Rn}. Motivated by the three-dimensional case, we define the hypothesis \makeatletter
 \Hy@raisedlink{\hypertarget{defi:beth}{}}$(\beth)$
 \vspace{7pt} 
as follows. 

\noindent
Fix $4\leq n\leq 7$ and suppose that $g_{j}$ is a sequence of metrics on $\{|x| \leq 2r_{j}\} \subset \RR^{n}$ that is locally smoothly converging to $g_{\RR^{n}}$. Assume that:

 \begin{enumerate}[itemsep=5pt, topsep=5pt]
 \item We have $\Sigma_{j}\subset B_{r_{j}}(0) \subset \RR^{n}$  a sequence of properly embedded minimal hypersurfaces with $\partial\Sigma_{j}\subset \partial B_{r_{j}}(0)$.
 \item The surfaces $\Sigma_{j}$ are connected.
 \item The hypersurfaces have $\Index(\Sigma_{j})\leq I$.
 \item The hypersurfaces satisfy $\vol(\Sigma_{j}) \leq \Lambda r_{j}^{n-1}$.
 \item There is a sequence of non-empty smooth blow-up sets $\cB_{j}\subset B_{\tau_{0}}(0)$ (where $\tau_{0}$ is fixed in Lemma \ref{lemm:ann-decomp}) with $|\cB_{j}|\leq I$ and $C>0$ so that 
 \[
 |\sff_{\Sigma_{j}}|(x) d_{g_{j}}(x,\cB_{j}\cup\partial\Sigma_{j}) \leq C,
 \]
 for $x \in\Sigma_{j}$.
 \item The smooth blow-up sets converge to a set of points $\cB_{\infty}$ and for any $r>0$, the hypersurfaces $\Sigma_{j}\cap B_{r}(0)$ converge in sense of varifolds to a disk with multiplicity $k\in \NN$, i.e. 
 \[
[\Sigma_{j}\cap B_{r}(0)] \rightharpoonup k [\{x^{n}=0\}\cap B_{r}(0)].
\]
 \end{enumerate}

\noindent Then, we say that $\Sigma_{j}$ satisfies \hyperlink{defi:beth}{$(\beth)$}. 

\ \\
Let us briefly note that the main difference between hypothesis \hyperlink{defi:beth}{$(\beth)$} and the hypothesis \hyperlink{defi:alep}{$(\aleph)$} used in three dimensions is the assumption that the surfaces are connected (in addition to the assumption that they satisfy a uniform area bound). The connectedness assumption is useful to compensate for the fact that the half-space theorem fails in higher dimensions. To exploit this assumption, we will work ``big to small'' when proving the crucial curvature estimates, e.g., \eqref{eq:curv-14-nD-1pt}.

\begin{prop}
Given a sequence $\Sigma_{j}$ satisfying \hyperlink{defi:beth}{$(\beth)$} that intersect $\partial B_{1}(0)$ transversely, we may pass to a subsequence so that all of the $\Sigma_{j}\cap B_{1}(0)$ are diffeomorphic. 
\end{prop}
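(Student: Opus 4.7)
The plan is to induct on the index bound $I$, paralleling the proofs of Propositions \ref{prop:one-point-conc} and \ref{prop:mult-point-conc}. As there, the goal in each step is to upgrade the coarse curvature estimate built into $(\beth)$ to the quarter-pinching bound $|\sff_{\Sigma_j}|(x) d_{g_j}(x, \cB_\infty) < 1/4$ on an annular region between the finest concentration scale and the unit scale, so that Lemma \ref{lemm:ann-decomp} applies and identifies the cross-section of $\Sigma_j$ on that region with a fixed finite disjoint union of standard cylinders $\SS^{n-2} \times [0,1]$. The essential new difficulty relative to dimension three is that the half-space theorem fails in $\RR^n$ for $n \ge 4$; the role it plays in Propositions \ref{prop:one-point-conc} and \ref{prop:mult-point-conc} is taken over here by the varifold-convergence assumption in $(\beth)$, which forces any blow-down limit to be a multiplicity-$k'$ flat plane and hence (together with the curvature estimate) forces smooth convergence away from a finite set.

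For the base case $|\cB_j|=1$, write $\cB_j = \{p_j\}$ and $\lambda_j := |\sff_{\Sigma_j}|(p_j)$, and extract a locally smooth limit $\overline\Sigma_\infty = \lim \lambda_j(\Sigma_j - p_j) \subset \RR^n$ that is complete, properly embedded (by Theorem \ref{theo:fin-index-imp-proper}), non-flat, of index at most $I$, and of Euclidean volume growth. The varifold hypothesis in $(\beth)$ forces $\overline\Sigma_\infty$ to have $k$ planar ends asymptotic to $\{x^n=0\}$, so we can fix $R>0$ so that $\overline\Sigma_\infty$ meets $\partial B_R(0)$ transversely and $|\sff_{\overline\Sigma_\infty}|(x)|x| < 1/4$ for $|x| > R$. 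The contradiction-and-rescale argument from Proposition \ref{prop:one-point-conc} then yields $|\sff_{\Sigma_j}|(x) d_{g_j}(x, p_j) < 1/4$ on $\Sigma_j \cap (B_\delta(p_j) \setminus B_{R/\lambda_j}(p_j))$ for some $\delta > 0$, using $(\beth)$'s varifold hypothesis (rather than the half-space theorem) to rule out nontrivial blow-down behavior. The inductive step $|\cB_\infty| \ge 2$ proceeds as in Proposition \ref{prop:mult-point-conc}: for $\delta$ smaller than the separation radius of $\cB_\infty$ each component of $B_\delta(\cB_\infty)$ carries index at most $I-1$, and one produces the corresponding annular curvature estimate on $\Sigma_j \cap (B_\delta(\cB_\infty) \setminus B_{\varepsilon_j}(\cB_\infty))$ by the same blow-up argument. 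The collapsed subcase $|\cB_\infty|=1$, $|\cB_j|\ge 2$ is reduced to $|\cB_\infty| \ge 2$ by rescaling at the blow-up separation scale, exactly as in Proposition \ref{prop:mult-point-conc}.

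With the improved annular curvature estimates in hand, Lemma \ref{lemm:ann-decomp} identifies $\Sigma_j \cap (B_1(0) \setminus B_{\varepsilon_j}(\cB_\infty))$ up to diffeomorphism with a fixed disjoint union of cylinders $\SS^{n-2} \times [0,1]$ capping off $k$ copies of the planar annulus $\{x^n = 0\} \cap (B_1(0) \setminus B_\delta(\cB_\infty))$; inside each component of $B_{\varepsilon_j}(\cB_\infty)$ the diffeomorphism type is pinned down either by the inductive hypothesis or, in the base case, by the locally smooth convergence $\overline\Sigma_j \to \overline\Sigma_\infty$ on $B_R(0)$ (so that $\Sigma_j \cap B_{R/\lambda_j}(p_j)$ is eventually diffeomorphic to $\overline\Sigma_\infty \cap B_R(0)$). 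After passing to a further subsequence to make the sheet count $k$ and the fine-scale diffeomorphism types constant, and using the assumed transversality of $\Sigma_j$ to $\partial B_1(0)$ to see that the cylinders glue consistently along the outer boundary, the assembled diffeomorphism type of $\Sigma_j \cap B_1(0)$ is the same for all $j$ in the subsequence. The main obstacle is establishing the improved annular curvature estimate without the half-space theorem; this is exactly the step where the varifold-convergence and connectedness hypotheses built into $(\beth)$ do essential work, ruling out the possibility that bubbling at intermediate scales introduces uncontrolled topology that would otherwise obstruct the gluing.
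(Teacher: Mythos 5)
Your proposal correctly identifies the overall architecture (induction on $I$, establishing the quarter-pinching curvature bound on the intermediate annulus, then applying Lemma~\ref{lemm:ann-decomp} to pin down the diffeomorphism type), but there is a genuine gap in the one step you flag as central. You claim that ``the contradiction-and-rescale argument from Proposition~\ref{prop:one-point-conc}'' goes through, with the varifold hypothesis in \hyperlink{defi:beth}{$(\beth)$} replacing the half-space theorem to ``force any blow-down limit to be a multiplicity-$k'$ flat plane.'' This is not correct. The varifold hypothesis in \hyperlink{defi:beth}{$(\beth)$} only controls $\Sigma_j\cap B_r(0)$ for \emph{fixed} $r$; it says nothing about $\hat\Sigma_j := \delta_j^{-1}(\Sigma_j - p_j)$ at intermediate scales $\delta_j\to 0$, since $[\hat\Sigma_j\cap B_\rho(0)]$ corresponds to $[\Sigma_j\cap B_{\rho\delta_j}(p_j)]$ after rescaling, and $\rho\delta_j\to 0$. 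Indeed, in $\RR^n$ with $n\geq 4$ a catenoid lies between two parallel planes, so one genuinely must worry that the intermediate-scale blow-down $\hat\Sigma_\infty$ consists of a non-flat component (carrying index) alongside planar ones; the macroscopic varifold hypothesis alone cannot rule this out.

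The mechanism that actually closes this gap is different, and it is precisely the point the paper's proof emphasizes: one must choose $\delta_j$ to be the \emph{smallest} radius $\geq R/\lambda_j$ at which the estimate $|\sff_{\Sigma_j}|\, d_{g_j}(\cdot,p_j) < \tfrac14$ first fails, i.e., work ``big to small.'' This specific choice guarantees the quarter-pinching bound holds on $\Sigma_j\cap(B_2(0)\setminus B_{\gamma\delta_j}(p_j))$, so Lemma~\ref{lemm:ann-decomp} applies there and shows that region is a union of cylinders $\SS^{n-2}\times[0,1]$. Combined with the connectedness hypothesis in \hyperlink{defi:beth}{$(\beth)$}, this forces $\Sigma_j\cap B_{\gamma\delta_j}(p_j)$ to be connected, and hence the limit $\hat\Sigma_\infty$ (obtained after removing the singularity at $0$ via Proposition~\ref{prop:high-dim-remov-sing}) is a \emph{connected} complete embedded hypersurface with Euclidean volume growth. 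Since $\delta_j\geq R/\lambda_j$ rules out smooth convergence at $0$, the convergence has multiplicity at least two, so $\hat\Sigma_\infty$ is stable and therefore a single plane precisely because it is connected. Only then does one get a contradiction from the smooth convergence near $\partial B_1(0)$. Without the big-to-small choice of $\delta_j$, you cannot invoke Lemma~\ref{lemm:ann-decomp} on the outer annulus, cannot conclude connectedness of $\hat\Sigma_\infty$, and cannot rule out a non-flat component; so the step your proposal treats as routine is exactly where the argument would break down.
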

\begin{proof}
We prove this by induction on $I$. For $I=0$ this trivially follows from the curvature and area estimates. 

We first consider the one point of concentration, i.e. $|\cB_{j}| = 1$. We write $\cB_{j} = \{p_{j}\}$ and $\cB_{\infty}=\{p_{\infty}\}$ and $\lambda_{j} = |\sff_{\Sigma_{j}}|(p_{j})$. By passing to a subsequence, we have that 
\[
\overline \Sigma_{j} : = \lambda_{j}(\Sigma_{j}-p_{j})
\]
converges to $\overline\Sigma_{\infty}\subset \RR^{n}$ a complete, non-flat, properly embedded minimal surface with index at most $I$ and $\vol(\overline{\Sigma}_{\infty} \cap B_{r}(0))\leq \Lambda r^{n-1}$ (by the monotonicity formula). Because of these properties, $\overline{\Sigma}_{\infty}$ must be ``regular at infinity'' in the sense that outside of a large compact set, it is the finite union of a graphs, all over the same fixed plane, of functions with nice asymptotic behavior, see \cite{Schoen:symmetry,Tysk:finite-index}. In particular, we may take $R>0$ so that $\overline{\Sigma}_{\infty}$ intersects $\partial B_{R}(0)$ transversely and
\[
|\sff_{\overline{\Sigma}_{\infty}}|(x) d_{\RR^{n}}(x,0) < \frac 1 4
\]
for $x \in\overline{\Sigma}_{\infty}\setminus B_{R}(0)$. 

We claim that for $j$ sufficiently large,
\begin{equation}\label{eq:curv-14-nD-1pt}
|\sff_{\Sigma_{j}}|(x)d_{g_{j}}(x,p_{j}) < \frac 1 4
\end{equation}
for $x \in \Sigma_{j} \cap \left( B_{2}(0) \setminus B_{R/\lambda_{j}}(p_{j})\right)$. If this holds, then Lemma \ref{lemm:ann-decomp} easily is seen to imply that for $j$ sufficiently large, all of the hypersurfaces $\Sigma_{j}\cap B_{1}(0)$ are diffeomorphic (here, we have used the fact that ``regular ends'' are diffeomorphic to $\SS^{n-2}\times (0,1)$ with the standard smooth structure).

On the other hand, if \eqref{eq:curv-14-nD-1pt} does not hold, we may choose $\delta_{j}$ to be the smallest radius\footnote{Observe that when $n=3$, the half-space theorem affords us considerably more flexibility in this argument. Here, we must tranfer ``connectedness'' from larger to smaller scales by choosing the largest scale where the estimate \eqref{eq:curv-14-nD-1pt} is violated.}   greater than $R/\lambda_{j}$ so that
\[
|\sff_{\Sigma_{j}}|(x)d_{g_{j}}(x,p_{j}) < \frac 1 4
\]
holds for $x \in \Sigma_{j} \cap \left( B_{2}(0) \setminus B_{\delta_{j}}(p_{j})\right)$. Note that for $j$ sufficiently large, such a $\delta_{j}$ exists and moreover $\delta_{j}\to 0$. This follows from fact that $\Sigma_{j}$ converges smoothly to $\{x^{n}=0\}$ away from $p_{\infty}$. 

Define
\[
\hat \Sigma_{j} := \delta_{j}^{-1} (\Sigma_{j}-p_{j}).
\]
Passing to a subsequence, there is $\hat\Sigma_{\infty} \subset \RR^{n}$ so that $\hat\Sigma_{j}$ converges locally smoothly with finite multiplicity to $\hat\Sigma_{\infty}$ away from $\{0\}$, and converges in the sense of varifolds in $B_{1}(0)$. Because $\hat\Sigma_{\infty}$ has finite index, we may apply Proposition \ref{prop:high-dim-remov-sing} to see that the singularity at $\{0\}$ is removable. In particular (after relabeling the hypersurface), $\hat\Sigma_{\infty}$ is an embedded minimal hypersurface in $\RR^{n}$ with $\Index(\hat\Sigma_{\infty}) \leq I$ and $\vol(\hat\Sigma_{\infty}\cap B_{r}(0)) \leq \Lambda r^{n-1}$. In particular, it is regular at infinity and has finitely many components. Hence, we may choose $\gamma \geq 1$ so that $\partial B_{\gamma}(0)$ intersects each component transversely, and $\hat\Sigma_{\infty}\cap\partial B_{\gamma}(0)$ is the disjoint union of finitely many manifolds diffeomorphic to $\SS^{n-2}$ with the standard smooth structure. 

By choice of $\delta_{j}$, the curvature estimates \eqref{eq:curv-14-nD-1pt} hold for $x \in  \Sigma_{j} \cap \left( B_{2}(0) \setminus B_{\gamma \delta_{j}}(p_{j})\right)$. Applying Lemma \ref{lemm:ann-decomp}, we see that $\Sigma_{j} \cap \left( B_{2}(0) \setminus B_{\gamma \delta_{j}}(p_{j})\right)$ is diffeomorphic to the union of annular regions. In particular, $\Sigma_{j}\cap B_{\gamma\delta_{j}}(p_{j})$ must be connected (because we have assumed that $\Sigma_{j}$ is connected in \hyperlink{defi:beth}{$(\beth)$}). From this, we see that $\hat\Sigma_{\infty}$ is connected. Observe that the convergence of $\hat\Sigma_{j}$ to $\hat\Sigma_{\infty}$ cannot be smooth at $\{0\}$ by choice of $R$ and the assumption that $\delta_{j} \geq R/\lambda_{j}$. In particular, the convergence of $\hat\Sigma_{j}$ to $\hat\Sigma_{\infty}$ must occur with multiplicity at least two, so $\hat\Sigma_{\infty}$ is (two-sided) stable and thus a plane; note that this uses the fact that $\hat\Sigma_{\infty}$ is connected.\footnote{If we did not arrange for $\hat\Sigma_{\infty}$ to be connected, then we could only conclude that it contained a plane through the origin but it could have other non-flat components.} The convergence of $\hat\Sigma_{j}$ to $\hat\Sigma_{\infty}$ occurs smoothly near $\partial B_{1}(0)$. This contradicts the choice of $\delta_{j}$ (namely that \eqref{eq:curv-14-nD-1pt} fails at some point in $\Sigma_{j}\cap \partial B_{\delta_{j}}(p_{j})$). This completes the proof in the case that $|\cB_{j}| = 1$.

Now, we consider the case of $|\cB_{\infty}| \geq 2$. Pick $\delta > 0$ so that $\cB_{\infty}$ is $4\delta$-separated. In particular, if $B$ is a component of $B_{\delta}(\cB_{\infty})$, then for $j$ sufficiently large, we see that
\[
\Index(\Sigma_{j}\cap B) \leq I-1.
\]
We may choose $\varepsilon_{j}\to 0$ sufficiently slowly so that $\cB_{j}\subset B_{\varepsilon_{j}/j}(\cB_{\infty})$,
\[
\liminf_{j\to\infty}\varepsilon_{j} \min_{p\in\cB_{j}} |\sff_{\Sigma_{j}}|(p) = \infty
\] 
and so that every connected component of $\Sigma_{j}\cap B_{\delta}(\cB_{\infty})$ intersects $B_{\varepsilon_{j}}(\cB_{\infty})$. That we can find $\varepsilon_{j}$ satisfying final condition is easily justified by combining the smooth convergence away from $\cB_{\infty}$ to $\{x^{n}=0\}$ with the varifold convergence. 

Consider $\Sigma_{j}'$ a connected component of $\Sigma_{j}\cap B_{\delta}(p_{\infty})$ for some $p_{\infty}\in\cB_{\infty}$. We claim that for $j$ sufficiently large,
\begin{equation}\label{eq:curv-14-nD-mult-pt}
|\sff_{\Sigma_{j}}|(x)d_{g_{j}}(x,p_{\infty}) < \frac 1 4
\end{equation}
for $ x \in \Sigma_{j}' \cap \left( B_{\delta}(p_{\infty})\setminus B_{\varepsilon_{j}}(p_{\infty}) \right)$. Suppose that we have proven \eqref{eq:curv-14-nD-mult-pt} for each component. By the monotonicity formula and the uniform volume bound in \hyperlink{defi:beth}{$(\beth)$}, there must be a bounded number of such components. Thus, by taking $j$ sufficiently large, we have that
\[
|\sff_{\Sigma_{j}}|(x)d_{g_{j}}(x,p_{\infty}) < \frac 1 4
\]
for $x \in \Sigma_{j}\cap \left( B_{\delta}(p_{\infty})\setminus  B_{\varepsilon_{j}}(p_{\infty})\right)$. The inductive step (it is not hard to see that it is applicable to each connected component of $\Sigma_{j}\cap B_{\varepsilon_{j}}(p_{\infty})$, by how we chose $\varepsilon_{j}$), along with Lemma \ref{lemm:ann-decomp} and these bounds easily show that after passing to a subsequence each hypersurface $\Sigma_{j}\cap B_{\delta}(p_{\infty})$ is diffeomorphic. Passing to a further subsequence, we may arrange that each hypersurface $\Sigma_{j}\cap B_{\delta}(\cB_{\infty})$ is diffeomorphic. Now, since $\Sigma_{j}\setminus B_{\delta/2}(\cB_{\infty})$ converges smoothly (with finite multiplicity) to $\{x^{n}=0\}\setminus B_{\delta/2}(\cB_{\infty})$, there are only a finite number of ways that the hypersurfaces $\Sigma_{j}\cap B_{\delta}(\cB_{\infty})$ could join up with $\Sigma_{j}\cap \left( B_{1}(0) \setminus B_{\delta}(\cB_{\infty})\right)$, which is diffeomorphic to a disjoint union of finitely many copies of the ``planar region'' $\{x^{n}=0\}\cap \left(B_{1}(0)\setminus B_{\delta/2}(\cB_{\infty}) \right)$. Hence, as usual it remains to prove \eqref{eq:curv-14-nD-mult-pt} for each connected component $\Sigma_{j}'$. 

The argument is similar to the one point of concentration above. If \eqref{eq:curv-14-nD-mult-pt} failed, then we could choose $\delta_{j}\geq \varepsilon_{j}$ to be the smallest number so that \eqref{eq:curv-14-nD-mult-pt} held for $x \in \Sigma_{j}' \cap \left( B_{\delta}(p_{\infty})\setminus B_{\delta_{j}}(p_{\infty})\right)$. As before, $\delta_{j}\to 0$. The surface
\[
\hat\Sigma_{j}' := \delta_{j}^{-1}(\Sigma_{j}'-p_{\infty})
\]
converges after passing to a subsequence to $\hat\Sigma_{\infty}'$. Now, we may argue exactly as in the one point case to choose $\gamma\geq 1$ so that each component of $\hat\Sigma_{\infty}'$ intersects $\partial B_{\gamma}(0)$ transversely in spheres. Lemma \ref{lemm:ann-decomp} implies that $\Sigma_{j}'\cap \left(B_{\delta}(p_{\infty})\setminus B_{\gamma\delta_{j}}(p_{\infty})\right)$ is the union of annular regions. This implies that $\hat\Sigma_{\infty}'$ is connected, and is thus a plane through the origin. This contradicts the choice of $\delta_{j}$ by the same argument as before. This completes the proof in the case that $|\cB_{\infty}| \geq 2$.

Finally, in the case that $|\cB_{\infty}| = 1$ and $|\cB_{j}| \geq 2$, we can rescale by the distance between the furthest two points of concentration. The proof proceeds just as in Proposition \ref{prop:mult-point-conc}, as long as we prove the crucial curvature estimates from the large to small scale, as we have done above. We omit the details. 
\end{proof}

Now, to finish the proof of Theorem \ref{theo:fin-top-type-Rn}, we first observe that it is not restrictive to assume that the hypersurfaces are connected (the volume bounds and monotonicity formula imply that there can be at most a bounded number of connected components). If $\Sigma_{j}$ was a sequence of pairwise non-diffeomorphic connected, embedded, minimal hypersurfaces in $\RR^{n}$ with $\vol(\Sigma\cap B_{R}(0)) \leq \Lambda R^{1-n}$ and $\Index(\Sigma) \leq I$, then because such surfaces are ``regular at infinity,'' we may rescale and rotate the $\Sigma_{j}$ so that outside of $B_{\tau_{0}/2}(0)$, the $\Sigma_{j}$ are graphical over $\{x^{n}=0\}$. This guarantees that in particular the $\Sigma_{j}\cap B_{1}(0)$ are pairwise non-diffeomorphic as well. It is not hard to show that $\Sigma_{j}\cap B_{r_{j}}(0)$ satisfies \hyperlink{defi:beth}{$(\beth)$}, so the proof follows from the previous proposition. 

The proof of Theorem \ref{theo:fin-top-type-Mn} also follows easily from the above proposition: for $\Sigma_{j}\subset (M^{n},g)$ as in the statement of Theorem \ref{theo:fin-top-type-Mn}, pairwise non-diffeomorphic, their curvature cannot be bounded. Combining the previous proposition with the usual Morse theory argument, we see that after passing to a subsequence, the $\Sigma_{j}$ are all diffeomorphic in small fixed balls containing the points of curvature blow-up. The other portion of $\Sigma_{j}$ converges smoothly, and there are only finitely many ways to connect the regions of large curvature to the regions of bounded curvature.

\appendix

\section{The genus of a surface}\label{app:genus-bdry}
\begin{defi}\label{defi:genus-non-orient}
For $\Sigma$ a non-orientable closed surface, we define the (non-orientable) genus of $\Sigma$ to be
\[
\genus(\Sigma) = \frac 12 \genus(\widehat\Sigma) 
\]
where $\widehat\Sigma$ is the oriented double cover. 
\end{defi}

\begin{defi}\label{defi:genus-bdry}
For a compact surface $\Sigma$ with boundary $\partial\Sigma$ consisting of one or more closed curves, we define $\genus(\Sigma)$ to be the genus of the closed surface formed by gluing disks to each boundary component. 
\end{defi}
Suppose that $\Sigma_{1},\Sigma_{2}$ are two oriented surfaces with boundary. If we form an oriented surface $\Sigma$ by gluing together $b$ boundary components, then from the well known formula $\chi(\Sigma) = \chi(\Sigma_{1}) + \chi(\Sigma_{2})$, we find that
\[
\genus(\Sigma) = \genus(\Sigma_{1})+\genus(\Sigma_{2}) + b-1.
\]
The reader should keep in mind the example of a torus thought of as a sphere with two disks removed, glued to an annulus (along the two boundary components); neither component has any genus in the sense of Definition \ref{defi:genus-bdry}, but obviously the torus is a genus one surface. 

As a consequence of this, we find
\begin{lemm}\label{lemm:adding-genus-ends}
Suppose that $\Sigma$ is a properly embedded surface in $B_{2}(0)\subset \RR^{3}$ so that there is a finite set of points $\cB\subset B_{1/2}(0)$ which are $3\varepsilon$-separated for some $\varepsilon \in (0,1/4)$ having the following properties:
\begin{enumerate}[itemsep=5pt, topsep=5pt]
\item The surface $\Sigma$ intersects $\partial B_{\varepsilon}(\cB)$ and $\partial B_{1}(0)$ transversely.
\item The surface $\Sigma\setminus B_{\varepsilon}(\cB)$ is topologically the union of finitely many components, each of which is topologically a disk with finitely many holes removed.
\item The surface $\Sigma\cap B_{\varepsilon}(\cB)$ is two-sided.
\item For each $p \in \cB$, we have an upper bound $r(p)$ on the genus of $\Sigma\cap B_{\varepsilon}(p)$ and an upper bound $m(p)$ on the number of boundary circles $\Sigma\cap \partial B_{\varepsilon}(p)$.
\end{enumerate}
Then, the genus of $\Sigma$ is bounded by 
\[
\genus(\Sigma) \leq \sum_{p\in\cB} \left(r(p)+m(p)-1\right)
\]
and the number of boundary circles of $\Sigma\cap B_{1}(0)$ is bounded by
\[
|\pi_{0}(\Sigma\cap \partial B_{1}(0))| \leq \sum_{p\in\cB} m(p).
\]
\end{lemm}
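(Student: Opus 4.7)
The plan is to track how $\Sigma$ is assembled from its planar ``exterior'' piece $E := \Sigma \setminus B_\varepsilon(\cB)$ (hypothesis (2)) and its ``interior'' pieces $A_p := \Sigma \cap \overline{B_\varepsilon(p)}$ (hypothesis (4)), glued along the transverse disjoint union of circles $\Sigma \cap \partial B_\varepsilon(\cB)$. Since gluing surfaces along a disjoint union of circles preserves Euler characteristic, $\chi(\Sigma) = \chi(E) + \sum_{p \in \cB} \chi(A_p)$. Writing $c_E, c_{A_p}$ for numbers of components, $g_p = \genus(A_p) \le r(p)$, $m_p' \le m(p)$ for the number of boundary circles of $A_p$, and $M' := |\pi_0(\Sigma \cap \partial B_1(0))|$, planarity of $E$ gives $\chi(E) = 2c_E - \sum_p m_p' - M'$, while $\chi(A_p) = 2c_{A_p} - 2g_p - m_p'$ and $\chi(\Sigma) = 2c_\Sigma - 2\genus(\Sigma) - M'$. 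A direct algebraic manipulation then yields the identity
\[
\genus(\Sigma) \;=\; \sum_{p \in \cB} g_p \;+\; b_1(H),
\]
where $H$ is the bipartite multigraph whose vertices are the components of $E$ together with the components of the various $A_p$'s, and whose edges are the circles of $\Sigma \cap \partial B_\varepsilon(\cB)$ (each connecting the $A_p$-component and the $E$-component it separates).

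Next I would bound $b_1(H)$ combinatorially by building $H$ incrementally, starting with the $c_E$ isolated $E$-vertices and adding the components of each $A_p$ one at a time. Adding a component $\alpha$ of $A_p$ with $d_\alpha \ge 1$ boundary circles attaches one new vertex together with $d_\alpha$ edges to $k_\alpha \le d_\alpha$ distinct existing $E$-vertices, which increases $b_1$ by exactly $d_\alpha - k_\alpha \ge 0$; closed components of $A_p$ (with $d_\alpha = 0$) are isolated and contribute nothing. Summing within each $p$ yields $\sum_\alpha (d_\alpha - k_\alpha) \le m_p' - c_{A_p}^{\mathrm{bd}} \le m(p) - 1$, the last inequality using that $c_{A_p}^{\mathrm{bd}} \ge 1$ whenever $A_p$ is non-empty (and any $p$ with $A_p = \emptyset$ can be discarded from $\cB$ without changing either side). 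Combined with $g_p \le r(p)$, this produces the genus bound $\genus(\Sigma) \le \sum_p (r(p) + m(p) - 1)$.

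For the boundary-circle estimate, I would apply the analogous bookkeeping to the planar region $\widetilde E := \Sigma \cap (B_1(0) \setminus B_\varepsilon(\cB))$, which remains a disjoint union of disks-with-holes (cutting a planar surface along the transverse circles $\Sigma \cap \partial B_1(0)$ preserves planarity). Every circle of $\Sigma \cap \partial B_1(0)$ is a boundary circle of some component $C$ of $\widetilde E$; splitting the boundary circles of $C$ into $a_C$ on $\partial B_1(0)$ and $b_C$ on $\partial B_\varepsilon(\cB)$, the same graph-theoretic bookkeeping applied to the bipartite graph joining $\widetilde E$-components to $A_p$-components, together with the fact that each $\widetilde E$-component has at least one boundary circle on $\partial B_\varepsilon(\cB)$ (otherwise the underlying component of $\Sigma$ would be disjoint from $\cB$, contradicting the setup in which the lemma is applied), yields $M' = \sum_C a_C \le \sum_p m_p' \le \sum_p m(p)$.

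The main obstacle I expect is organizing the Step 2 combinatorics sharply enough to land on $m(p) - 1$ rather than a weaker per-component bound: correctly handling disconnected $A_p$, closed components of $A_p$, and multiple edges attaching to the same $E$-vertex all require the incremental construction of $H$ with careful attribution of the ``excess edges'' to each $p \in \cB$. Once that bookkeeping is in place, both the genus and boundary bounds fall out of the same argument applied to $H$ and to the analogous bipartite graph built from $\widetilde E$.
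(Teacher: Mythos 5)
Your approach---tracking Euler characteristics through the decomposition $\Sigma = E \cup \bigcup_p A_p$ and reducing the genus estimate to a cycle-rank bound on the gluing graph $H$---is exactly what the paper intends; the paper itself gives essentially no proof, only the additivity of $\chi$ and the formula $\genus(\Sigma) = \genus(\Sigma_1) + \genus(\Sigma_2) + b - 1$, so the detailed bookkeeping you supply is genuinely useful. The identity $\genus(\Sigma) = \sum_p g_p + b_1(H)$ checks out (after you fix the slip where $M'$ is defined as $|\pi_0(\Sigma\cap\partial B_1(0))|$ but is actually playing the role of the number of boundary circles of $\Sigma$ on $\partial B_2(0)$; it cancels in the algebra, so this is cosmetic). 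However, your incremental computation of $b_1(H)$ contains an error: adding $\alpha$ with $d_\alpha \ge 1$ edges to $k_\alpha$ distinct $E$-vertices changes $b_1$ by $d_\alpha - \ell_\alpha$, where $\ell_\alpha$ is the number of \emph{connected components} of the current graph those vertices lie in, not $d_\alpha - k_\alpha$. Since $\ell_\alpha \le k_\alpha$, your formula underestimates $b_1$, which is the wrong direction for an upper bound. Fortunately the proof is rescued because $\ell_\alpha \ge 1$, giving $\Delta b_1 \le d_\alpha - 1$, and $\sum_{\alpha:\, d_\alpha\ge1}(d_\alpha - 1) = m_p' - c_{A_p}^{\mathrm{bd}} \le m(p)-1$; but you should state the increment correctly.

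The boundary-circle bound is where there is a genuine gap. You invoke ``the same graph-theoretic bookkeeping'' applied to $\widetilde E$ together with the observation that each component of $\widetilde E$ has at least one boundary circle on $\partial B_\varepsilon(\cB)$, and conclude $M' = \sum_C a_C \le \sum_p m_p'$. This does not follow: a single planar component $C$ of $\widetilde E$ can have $a_C = 2$ and $b_C = 1$ (a pair of pants), and then $\sum_C a_C > \sum_C b_C$. Concretely, take $\cB = \{0\}$, $\varepsilon < 1/4$, and let $\Sigma$ be a thin cylinder in $B_2(0)$ (meeting $\partial B_1(0)$ in two latitude circles) with a small dimple pushed in so that $\Sigma\cap B_\varepsilon(0)$ is a disk: then $\Sigma\setminus B_\varepsilon(0)$ is a disk with two holes, $m(0)=1$, $r(0)=0$, yet $|\pi_0(\Sigma\cap\partial B_1(0))| = 2$. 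So the argument (and indeed the stated inequality, read as a free-standing topological fact) fails. What makes the bound true where the paper uses it is the additional geometric information that each component of $\Sigma\cap(B_1(0)\setminus B_\varepsilon(\cB))$ is graphical over a single plane, hence meets $\partial B_1(0)$ in exactly one circle; then $a_C = 1 \le b_C$ for each component and the bound follows. If you want a correct self-contained statement, you should either add the hypothesis that each component of $\Sigma\cap(B_1(0)\setminus B_\varepsilon(\cB))$ is an annulus, or more directly that each component meets $\partial B_1(0)$ in at most one circle. As written, your argument silently assumes something equivalent to this when it transfers ``the same bookkeeping'' to $\widetilde E$.
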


\section{Finite index surfaces in $\RR^{3}$}\label{app:finite-index-RR3}
The following theorem is a consequence of results due to Osserman \cite{Osserman:FTC} and Fischer-Colbrie \cite{Fischer-Colbrie:1985} in the two-sided case. The one sided case is due to Ros \cite{Ros:oneSided}.
\begin{theo}\label{theo:fin-index-imp-proper}
Suppose that $\Sigma\hookrightarrow \RR^{3}$ is a complete minimal injective immersion in $\RR^{3}$ with finite index. Then $\Sigma$ is two-sided, has finite total curvature, and is properly embedded. 
\end{theo}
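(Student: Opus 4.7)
The plan is to first dispose of the two-sided case by combining the Fischer-Colbrie characterization of finite index with Osserman's structure theorem, and then reduce the one-sided case to the two-sided one by passing to the orientable double cover.

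\emph{Two-sided case.} Assume $\Sigma$ is two-sided with a chosen unit normal $N$. The Jacobi operator $L := \Delta_{\Sigma} + |\sff_{\Sigma}|^{2}$ has only finitely many negative eigenvalues when acting on $L^{2}$-Sobolev completions over compactly supported functions, so by the variational characterization of index there exists a compact domain $K\subset\Sigma$ such that $\Sigma\setminus K$ is stable, i.e.,
\[
\int_{\Sigma\setminus K} |\sff_{\Sigma}|^{2}\varphi^{2}\, d\Sigma \leq \int_{\Sigma\setminus K} |\nabla \varphi|^{2} \, d\Sigma \qquad \text{for all } \varphi \in C^{\infty}_{c}(\Sigma\setminus K).
\]
Plugging in a standard logarithmic cutoff that equals $1$ on an intrinsic ball of radius $R$ and decays linearly to $0$ on a ball of radius $R^{2}$, one obtains (after using the well known intrinsic area growth bounds $\mathrm{Area}(B_{R}(p)\cap \Sigma) \lesssim R^{2}$ valid for stable two-sided minimal surfaces in $\RR^{3}$) an $R$-independent bound on $\int_{\Sigma\setminus K} |\sff_{\Sigma}|^{2}$. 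Taking $R\to\infty$ yields finite total curvature, i.e., $\int_{\Sigma} |\sff_{\Sigma}|^{2} < \infty$. This is the Fischer-Colbrie argument.

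At this point I would invoke Osserman's classical structure theorem: a complete minimal immersion in $\RR^{3}$ with finite total curvature is conformally equivalent to a compact Riemann surface with finitely many points removed, the Weierstrass data extend meromorphically across the punctures, and each end is ``regular at infinity'' (graphical with logarithmic or bounded asymptotics over a fixed plane outside a compact set). In particular, the immersion is proper, so since we assumed injectivity, $\Sigma$ is properly embedded.

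\emph{One-sided case.} Let $\pi: \widehat\Sigma \to \Sigma$ be the orientable double cover, with deck involution $\tau$, and consider the composed minimal immersion $\widehat\Sigma \to \RR^{3}$; by construction this is two-sided, and $N\circ\tau = -N$ for either choice of global unit normal $N$ on $\widehat\Sigma$. A test function $\varphi$ on $\widehat\Sigma$ with $\varphi\circ\tau = +\varphi$ descends to $\Sigma$, while one with $\varphi\circ\tau = -\varphi$ gives an element of the space over which the one-sided index is computed; decomposing into $\tau$-eigenspaces then yields $\Index(\widehat\Sigma) \leq 2\,\Index(\Sigma) < \infty$. The two-sided case applied to $\widehat\Sigma$ shows $\widehat\Sigma$ is properly immersed in $\RR^{3}$ with finite total curvature and each end regular at infinity. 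Finally, since the injectivity of $\Sigma\hookrightarrow\RR^{3}$ forces $\widehat\Sigma \to \RR^{3}$ to factor through $\tau$, at each end $\tau$ acts as a fixed-point-free isometry of the asymptotic plane or catenoidal model that also reverses the normal; this rigidity (combined with the fact that there are only finitely many ends, all regular) can be ruled out, following Ros, by constructing an explicit contradiction on any individual end. Hence $\Sigma$ is actually two-sided, and the first part applies directly to $\Sigma$.

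The main obstacle is the final step of the one-sided case: showing that a non-trivial fixed-point-free isometric involution reversing the unit normal cannot exist on a complete finite-total-curvature minimal surface in $\RR^{3}$. This is precisely the content of Ros's one-sided theorem, and it is where all the genuine work lies; the Fischer-Colbrie step and the appeal to Osserman are now essentially standard.
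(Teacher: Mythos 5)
Your two-sided case is in the spirit of the paper's proof (Fischer-Colbrie's characterization of finite index plus Osserman's structure theorem), though I'd note that the ``well known intrinsic area growth bound'' you invoke is itself a nontrivial fact, and Fischer-Colbrie's actual argument is more delicate than the bare log-cutoff sketch suggests. Still, that part is essentially correct modulo standard references.

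The genuine gap is in the one-sided reduction. You assert that ``decomposing into $\tau$-eigenspaces then yields $\Index(\widehat\Sigma) \leq 2\,\Index(\Sigma)$.'' What the $\tau$-eigenspace decomposition actually gives is
\[
\Index(\widehat\Sigma) \;=\; \Index_{s}(\widehat\Sigma) \;+\; \Index_{a}(\widehat\Sigma) \;=\; \Index_{s}(\widehat\Sigma) \;+\; \Index(\Sigma),
\]
where $\Index_{s}$ and $\Index_{a}$ denote the index of the Jacobi operator restricted to $\tau$-symmetric and $\tau$-anti-symmetric functions respectively, and the one-sided index of $\Sigma$ is by definition the anti-symmetric part. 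There is no a priori control of $\Index_{s}(\widehat\Sigma)$ by $\Index_{a}(\widehat\Sigma)$: a symmetric negative direction need not give rise to an anti-symmetric one, and the natural attempt to produce one by multiplying by an anti-symmetric Jacobi field $\langle N,v\rangle$ fails because that Jacobi field changes sign whenever $\widehat\Sigma$ is unstable. Controlling the full index of the double cover is exactly the nontrivial content of Ros's one-sided theorem, which is why the paper simply cites \cite[Theorem 17]{Ros:oneSided} to get finite total curvature in the one-sided case, rather than reducing to Fischer-Colbrie.

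You also attribute to Ros a step in which one ``rules out'' a fixed-point-free normal-reversing involution end by end. The paper's route to two-sidedness is cleaner and sidesteps this: once finite total curvature is in hand (from Fischer-Colbrie and Ros), Osserman gives conformal finiteness of the double cover, Schoen's regularity-at-infinity theorem \cite{Schoen:2ends} gives that the injective immersion is in fact \emph{proper}, hence $\Sigma$ is properly embedded, and a properly embedded hypersurface in $\RR^{3}$ is two-sided by Alexander duality (cf.\ \cite{Samelson} and Lemma \ref{lem:two-sided-small-balls}). You should replace the last paragraph of your one-sided argument with this chain, and replace the bogus index comparison with a direct appeal to Ros's Theorem 17 for the finite total curvature of one-sided $\Sigma$.
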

\begin{proof}
By \cite[Theorem 2]{Fischer-Colbrie:1985} and \cite[Theorem 17]{Ros:oneSided}, finite index is equivalent to finite total curvature for a complete minimal immersion in $\RR^{3}$. Using \cite{Osserman:FTC}, we have that $\Sigma$ is conformally diffeomorphic to a punctured Riemann surface. Hence, so is the orientable double cover---this shows the orientable double cover has finite total curvature. By \cite{Schoen:2ends}, we find that $\Sigma$ is a proper embedding and is thus two-sided. 
\end{proof}

This, along with the half-space theorem for minimal surfaces of finite total curvature (which is a trivial consequence of \cite{Schoen:2ends}) implies. 

\begin{coro}\label{coro:limit-lam-struct}
Suppose that $\Lambda$ is a smooth lamination of $\RR^{3}$ with finite index. Then, it is either a single properly embedded surface of finite total curvature or else it consists only of parallel planes, i.e. after a rotation $\Lambda = \RR^{2}\times K$ for $K\subset \RR$ closed. 
\end{coro}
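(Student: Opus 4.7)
The plan is to apply Theorem \ref{theo:fin-index-imp-proper} leaf-by-leaf, and then invoke the half-space theorem for finite total curvature surfaces to dichotomize between one non-flat leaf and a lamination by parallel planes.

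First I would fix a leaf $L$ of $\Lambda$. Since $\Lambda$ is closed in $\RR^3$, the leaf $L$ is complete as an injectively immersed minimal surface. Because $\Lambda$ has finite index, each leaf also has finite index (indeed, test functions supported on a single leaf give an embedding of its negative eigenspace into that of $\Lambda$). Theorem \ref{theo:fin-index-imp-proper} then applies to give that every leaf of $\Lambda$ is two-sided, properly embedded, and has finite total curvature. In particular, every leaf is asymptotic at infinity to a finite collection of mutually parallel planes, via \cite{Schoen:2ends,Schoen:symmetry}.

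Next I would separate into two cases. If $\Lambda$ consists of a single leaf, then that leaf is a properly embedded minimal surface of finite total curvature and we are done. Otherwise, pick two distinct leaves $L_{1}, L_{2}$ of $\Lambda$; by definition of a lamination they are disjoint. By the half-space theorem for minimal surfaces of finite total curvature (which, as noted in the excerpt, is a trivial consequence of \cite{Schoen:2ends} together with Schoen's regularity at infinity), a non-flat properly embedded minimal surface with finite total curvature cannot be disjoint from another complete properly embedded minimal surface in $\RR^{3}$. Applied to $L_{1}$ and $L_{2}$, this forces at least one of them to be a plane. Running the same argument against every leaf of $\Lambda$, either $\Lambda$ has a single (possibly non-flat) leaf, or else every leaf of $\Lambda$ is a plane.

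In the latter case, I would conclude by elementary geometry: distinct planes in $\RR^{3}$ that are disjoint must be parallel. After a rotation, every leaf of $\Lambda$ has the form $\RR^{2}\times\{t\}$ for some $t\in\RR$. Letting $K = \{ t \in \RR : \RR^{2}\times\{t\} \text{ is a leaf of }\Lambda\}$, we have $\Lambda = \RR^{2}\times K$, and $K$ is closed in $\RR$ because $\Lambda$ is closed in $\RR^{3}$. The only subtle point in the entire argument is justifying the half-space statement for a non-flat leaf disjoint from another leaf; this is where the precise asymptotic description of finite total curvature ends from \cite{Schoen:2ends}, \cite{Schoen:symmetry} is used, but since this is the cited input, the proof is essentially a two-step deduction from Theorem \ref{theo:fin-index-imp-proper} and the half-space theorem.
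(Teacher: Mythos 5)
Your proof is correct and follows the same approach as the paper, which proves the corollary in one line by citing Theorem \ref{theo:fin-index-imp-proper} together with the half-space theorem for finite total curvature surfaces; you have simply made the dichotomy explicit (single non-flat leaf versus every leaf being a plane). One small imprecision: when you apply the half-space theorem to two disjoint leaves $L_1, L_2$, it forces \emph{both} to be planes (not merely ``at least one''), since each leaf is itself a properly embedded finite total curvature surface disjoint from the other; your subsequent conclusion is unaffected, but the statement as written understates what the half-space theorem gives you.
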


\section{Two-sidedness of embedded surfaces on small scales} \label{app:two-sided}

In this section, we record the following well known two-sidedness property of properly embedded surfaces. We include a short proof for completeness (cf.\ \cite{Samelson}).
\begin{lemm}\label{lem:two-sided-small-balls}
Suppose that $\Sigma^{n} \subset B_{1}(0)\subset \RR^{n+1}$ is a properly embedded hypersurface. Then, $\Sigma$ is two-sided. 
\end{lemm}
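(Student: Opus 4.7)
The plan is to show that the normal line bundle $N\Sigma$ is trivial by a Jordan--Brouwer style separation argument, exploiting the contractibility of $B_1(0)$.

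First I would reduce to the case where $\Sigma$ is connected, treating each component of $\Sigma$ independently, since two-sidedness is a component-wise property. Since $\Sigma$ is properly embedded, it is a closed subset of $B_1(0)$, so the tubular neighborhood theorem gives an open neighborhood $T \subset B_1(0)$ of $\Sigma$ diffeomorphic to the total space of $N\Sigma$. The complement $T \setminus \Sigma$ deformation retracts onto the unit normal bundle $S(N\Sigma)$, which is a $2$-fold cover of $\Sigma$; this cover is disconnected precisely when the normal line bundle is trivial, i.e.\ when $\Sigma$ is two-sided. So the goal becomes showing that $T \setminus \Sigma$ is disconnected.

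The key step is to count the connected components of $B_1(0) \setminus \Sigma$ using that $B_1(0)$ is contractible. The cleanest route is through the long exact sequence of the pair $(B_1(0),\,B_1(0)\setminus\Sigma)$ with $\ZZ/2$ coefficients: by excision one identifies the relative groups with $H_\ast(T, T\setminus\Sigma;\ZZ/2)$, and then the Thom isomorphism (which holds for any real vector bundle over $\ZZ/2$, orientable or not) gives $H_k(B_1(0), B_1(0)\setminus\Sigma;\ZZ/2)\cong H_{k-1}(\Sigma;\ZZ/2)$. Feeding this into the long exact sequence of the pair together with $H_\ast(B_1(0);\ZZ/2)\cong H_\ast(\mathrm{pt};\ZZ/2)$ yields
\[
H_0(B_1(0)\setminus\Sigma;\ZZ/2)\cong (\ZZ/2)^2,
\]
so $B_1(0)\setminus\Sigma$ has exactly two connected components. (Alternatively, closer in spirit to Samelson's approach, one can argue directly: if the normal of $\Sigma$ reversed along some loop on $\Sigma$, sliding this loop slightly off $\Sigma$ in the tubular neighborhood would produce a loop in $B_1(0)$ meeting $\Sigma$ in exactly one transverse point; this contradicts the homotopy invariance of mod $2$ intersection number applied to a contraction in the simply connected space $B_1(0)$.)

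From the component count, two-sidedness follows: if $\Sigma$ were one-sided then $S(N\Sigma)$, and hence $T\setminus\Sigma$, would be connected. Since $\Sigma$ is the topological frontier of $B_1(0)\setminus\Sigma$ in $B_1(0)$, every component of $B_1(0)\setminus\Sigma$ accumulates on $\Sigma$ and therefore meets the open set $T\setminus\Sigma$; connectedness of $T\setminus\Sigma$ then forces all such components to coincide, contradicting the fact that there are two. Hence $\Sigma$ is two-sided. The main technical point to check is the Thom isomorphism step when $\Sigma$ is non-compact, which is precisely why we work with ordinary (not compactly supported) $\ZZ/2$ cohomology, where the isomorphism holds without any orientability assumption.
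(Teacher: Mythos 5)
Your proof is correct, and in fact you give two arguments: the parenthetical ``closer in spirit to Samelson's approach'' is essentially word-for-word the paper's proof (find a loop meeting $\Sigma$ in one point, cap it off with a disk using simple connectivity of $B_1(0)$, perturb transversely, and contradict the mod~$2$ intersection count), while your main argument is a more systematic homological packaging of the same separation principle. The homological route buys you a clean statement --- for \emph{any} connected properly embedded hypersurface $\Sigma\subset B_1(0)$, the complement $B_1(0)\setminus\Sigma$ has exactly two components by excision, the $\ZZ/2$ Thom isomorphism, and the long exact sequence of the pair --- and then two-sidedness drops out by comparing with the unit normal sphere bundle $S(N\Sigma)$, whose connectivity detects one-sidedness. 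This is more machinery than the paper invokes, but it has the advantage of being robust (no choice of loop, no perturbation argument) and of directly exhibiting the separation statement, which is the geometric content hidden in the intersection-number argument. Two small points worth tightening: you should treat connectedness component-wise at the outset as you say, and your final sentence refers to ``ordinary $\ZZ/2$ cohomology'' whereas the computation is carried out in homology; the intended point --- that the $\ZZ/2$ Thom class exists and the Thom isomorphism holds for an arbitrary real line bundle over a paracompact base without any orientability hypothesis --- is correct either way, but the wording should match the computation.
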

\begin{proof}
Suppose $\Sigma$ were one-sided. Then, we can find a loop $\gamma \subset B_{1}(0)$ so that $\gamma$ intersects $\Sigma$ in exactly one point. Because $B_{1}(0)$ is simply connected, $\gamma$ spans a disk $D$. Because $\Sigma$ is properly embedded, we can perturb $D$ away from its boundary so that $D$ is transverse to $\Sigma$. This is easily seen to be a contradiction.
\end{proof}

\section{Removable singularity results}\label{app:remov-sing}

The following result is well known, but we indicate the proof for completeness.
\begin{prop}[Properly embedded surfaces with curvature bounds]\label{prop:remov-sing-proper}
Suppose that $(M,g)$ is a complete Riemannian three-manifold and $p\in M$. Suppose that for $\varepsilon > 0$, $\Sigma \subset B_{\varepsilon}(p)\setminus\{p\}$ is a properly embedded minimal surface with
\[
|\sff_{\Sigma}|(x)d_{g}(x,p) \leq C.
\] 
Then, $\Sigma$ smoothly extends across $p$, i.e. there is $\widetilde\Sigma\subset B_{\varepsilon}(p)$ with $\Sigma = \widetilde\Sigma\setminus\{p\}$. 
\end{prop}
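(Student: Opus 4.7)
The plan is to extract a blow-up limit of $\Sigma$ at $p$, remove the singularity at the origin in that limit by classical results, and then use Allard-type regularity to lift smoothness back to $\Sigma$ itself.

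First I would reduce to the case $p \in \overline{\Sigma}$, since otherwise $\Sigma$ is already closed in $B_{\varepsilon}(p)$ and one may take $\widetilde{\Sigma} = \Sigma$. Working in geodesic normal coordinates around $p$, identify $p$ with the origin of $\RR^{3}$. For any sequence $r_{j} \searrow 0$, consider the rescalings $\Sigma_{j} := r_{j}^{-1}\Sigma$ equipped with the rescaled ambient metrics $g_{j} := r_{j}^{-2}g$, which converge smoothly to $g_{\RR^{3}}$ on compact subsets of $\RR^{3}$. The curvature hypothesis translates into the scale-invariant bound $|\sff_{\Sigma_{j}}|_{g_{j}}(y)\,d_{g_{j}}(y,0) \leq C$, giving uniform curvature control on compact subsets of $\RR^{3} \setminus \{0\}$. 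The monotonicity formula in $(M,g)$ applied at $p$ yields quadratic area bounds $|\Sigma \cap B_{r}(p)|_{g} \leq \Lambda r^{2}$ for $r$ small, which rescales to uniform area bounds for the $\Sigma_{j}$. Combined with properness of $\Sigma$ in $B_{\varepsilon}(p)\setminus\{p\}$, a subsequence of the $\Sigma_{j}$ converges locally smoothly on $\RR^{3}\setminus\{0\}$ to a minimal lamination $\mathcal{L}$ of $\RR^{3}\setminus\{0\}$ whose leaves inherit $|\sff_{\mathcal{L}}|(x)|x|\leq C$.

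Each leaf $L$ of $\mathcal{L}$ is a properly embedded minimal surface in $\RR^{3}\setminus\{0\}$. Integrating the curvature bound against the rescaled area bound over dyadic annuli $A_{k} := B_{2^{-k}}(0)\setminus B_{2^{-k-1}}(0)$ yields $\int_{L\cap A_{k}}|\sff|^{2}\leq C'$ independent of $k$, so $L$ has finite total curvature on every punctured ball around the origin (and also at infinity by the same estimate). The classical removable singularity theorem for minimal surfaces of finite total curvature then extends $L$ smoothly across $0$, producing a smooth minimal lamination $\widetilde{\mathcal{L}}$ of all of $\RR^{3}$. Using embeddedness of the $\Sigma_{j}$ (so that the density at the origin is a positive integer equal to the multiplicity of the lamination), Allard's regularity theorem upgrades the convergence $\Sigma_{j} \to \widetilde{\mathcal{L}}$ to smooth convergence across the origin as well. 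Unwinding the rescaling, $\Sigma$ is, in a small ball around $p$, a finite disjoint union of smooth embedded minimal graphs over the tangent planes of the corresponding leaves of $\widetilde{\mathcal{L}}$ through $p$; each graph extends smoothly across $p$ by standard PDE estimates, yielding the desired $\widetilde{\Sigma}$.

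The main obstacle will be the smoothness-across-the-origin step. It is necessary to ensure that the (possibly multi-sheeted) convergence $\Sigma_{j} \to \widetilde{\mathcal{L}}$ is compatible with embeddedness of the $\Sigma_{j}$, and that the tangent plane at $p$ is uniquely defined so that the blow-up limit does not depend on the chosen sequence $r_{j}$. I expect this to follow from the monotonicity formula combined with the integer-valued density and the strong maximum principle applied to pairs of converging sheets, but these ingredients must be assembled carefully.
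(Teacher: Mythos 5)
Your overall strategy (blow up at $p$, analyze the limit, transfer regularity back) is in the right spirit, but there are two genuine problems with the execution.

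The first is the finite total curvature claim. You correctly observe that the scale-invariant bounds give $\int_{L\cap A_{k}}|\sff|^{2}\leq C'$ on each dyadic annulus $A_{k}$, with $C'$ independent of $k$. But a punctured ball contains infinitely many such annuli, so summing gives $\sum_{k} C' = \infty$; you cannot conclude $\int_{L}|\sff|^{2}<\infty$ from this. (A uniform-per-annulus bound is consistent with infinite total curvature, as one sees already from the scale-invariant estimate alone.) Consequently the classical removable singularity theorem for finite-total-curvature surfaces is not available in the form you invoke it. What you should have used instead is the monotonicity formula more fully: the blow-up limit along any $r_{j}\searrow 0$ is a \emph{tangent cone}, i.e.\ dilation-invariant, and the curvature bound forces the cross-section in $\SS^{2}$ to be a smooth closed geodesic, hence the cone is a single plane with multiplicity. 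This is a free consequence of the structure you already set up, and it bypasses the total curvature issue entirely.

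The second problem, which you honestly flag as an obstacle, is the passage from ``the blow-up limit is a plane'' to ``$\Sigma$ itself extends across $p$.'' Allard's theorem does not directly apply when the density at $p$ is an integer greater than one, and the maximum principle/integer-density ideas you sketch need to be assembled into a real argument. The paper's route here is different and worth comparing: once tangent cones are planes, a short blow-up argument yields the improved bound $|\sff_{\Sigma}|(x)\,d_{g}(x,p)<\tfrac14$ near $p$, and then the scale-breaking Morse-theoretic annular decomposition (Lemma~\ref{lemm:ann-decomp}) shows $\Sigma\cap B_{\delta}(p)$ is a finite union of topological planes and annuli. Finite Euler characteristic in hand, the removable singularity result of Choi--Schoen \cite[Proposition~1]{ChSc85} for properly embedded minimal surfaces of finite topology does the rest. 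The topological decomposition is precisely the mechanism that controls the multi-sheeted structure near $p$ and replaces the case-by-case maximum principle argument you were anticipating. Without an analogue of that step, your argument does not close.
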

\begin{proof}
Because $\Sigma$ is proper it has finite area in $B_{\varepsilon}(p)\setminus \{p\}$. Hence, the monotonicity formula is applicable and we may consider a tangent cone to $\Sigma$ at $p$ (the tangent cone may not be unique). The assumed curvature estimate imples that any tangent cone is smooth away from $\{0\}$, so it is a single plane (possibly with multiplicity) through the origin, and blow-ups of $\Sigma$ converge smoothly away from $0$ to any such tangent cone. Combined with a blow-up argument, this shows that there is $\delta \in (0,\varepsilon)$ so that
\[
|\sff_{\Sigma}|(x) d_{g}(x,p) < \frac 1 4
\]
for all $x \in \Sigma \cap B_{\delta}(p)$. A Morse theory argument analogous to Lemma \ref{lemm:ann-decomp} implies that $\Sigma\cap B_{\delta}(p)$ is the union of topological planes and annuli. Hence, it has finite Euler characteristic. A properly embedded minimal surface with finite Euler characteristic is well known to extend across a point singularity, cf. \cite[Proposition 1]{ChSc85}.
\end{proof}

We will make use of the following Bernstein-type result due to Gulliver--Lawson \cite{GulliverLawson}; see also \cite[Lemma 3.3]{MePeRo13} and \cite[Lemma A.26]{CM:fixed-genus-5}.
\begin{theo}[Gulliver--Lawson's Bernstein theorem]\label{thm:GL-Bern}
Suppose that $\varphi :\Sigma \to \RR^{3}\setminus\{0\}$ is a non-empty two-sided stable minimal immersion which is complete away from $\{0\}$. Then the trace of $\varphi$ is a plane. 
\end{theo}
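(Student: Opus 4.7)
The plan is to use the two-sided stability inequality of $\Sigma$ tested against a logarithmic cutoff in the ambient radial function $r(x):=|\varphi(x)|$, in order to force the second fundamental form $A$ of $\Sigma$ to vanish identically, whence $\Sigma$ is totally geodesic and so a piece of an affine plane. Because $\Sigma$ is complete away from $\{0\}$, the function $r$ is proper on $\Sigma$ with values in $(0,\infty)$ — ends of $\Sigma$ can only accumulate on $\{0\}$ or escape to infinity in $\RR^3$ — so any cutoff of the form $\eta(\log r)$ with $\eta\in C^\infty_c(\RR)$ has compact support on $\Sigma$. Two-sided stability supplies the inequality
\[
\int_\Sigma |A|^2\psi^2\,dA \;\leq\; \int_\Sigma |\nabla^\Sigma\psi|^2\,dA \qquad \forall\,\psi\in C^\infty_c(\Sigma).
\]

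The first substantive step is a uniform quadratic area bound $|\Sigma\cap B_s(0)|\leq Cs^2$ for all $s>0$. The pointwise curvature estimate $|A|(x)\,r(x)\leq C$ follows by applying Schoen's estimate (Theorem \ref{theo:stable-curv-est-3d}) to $\Sigma\cap B_{r(x)/2}(x)$, which is a stable minimal surface with boundary inside $\RR^3\setminus\{0\}$. Monotonicity of the density $\Theta(s):=|\Sigma\cap B_s(0)|/s^2$ then reduces the area bound to capping $\lim_{s\to 0}\Theta$ and $\lim_{s\to\infty}\Theta$; both caps follow from a rescaling/compactness argument producing, in either limit, a stable two-sided minimal surface in $\RR^3\setminus\{0\}$ with bounded curvature, which (invoking Fischer--Colbrie--Schoen on its universal cover, since two-sided stability passes to covers) must be a finite union of planes and hence of bounded density.

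With the quadratic area growth in hand, set $\psi_k(x):=\eta\bigl(k^{-1}\log r(x)\bigr)$ where $\eta\in C^\infty_c(\RR,[0,1])$ is equal to $1$ on $[-1,1]$ and supported in $[-2,2]$. Then $\psi_k\nearrow 1$ pointwise on $\Sigma$ and, since $|\nabla^\Sigma\log r|\leq 1/r$, one has $|\nabla^\Sigma\psi_k|^2\leq C(kr)^{-2}$, so stability yields
\[
\int_\Sigma |A|^2\psi_k^2\,dA \;\leq\; \frac{C}{k^2}\int_{\Sigma\cap\{e^{-2k}\leq r\leq e^{2k}\}}\frac{dA}{r^2}.
\]
Decomposing the annular region into dyadic shells $\{2^j\leq r\leq 2^{j+1}\}$ and invoking the quadratic area bound gives $\int_{\{r_1\leq r\leq r_2\}}r^{-2}\,dA \leq C\log(r_2/r_1)$, so the right-hand side above is $\leq C/k\to 0$. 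Monotone convergence on the left then forces $\int_\Sigma |A|^2=0$, hence $A\equiv 0$. Consequently $\Sigma$ is totally geodesic in $\RR^3$, and by connectedness together with completeness away from $\{0\}$, the trace of $\varphi$ is a single affine plane.

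The hard part will be the quadratic area growth: nothing a priori rules out infinitely many sheets of $\Sigma$ piling up at $\{0\}$ or escaping to infinity with unbounded sheet count, and capping $\Theta$ is precisely where the two-sided stability hypothesis and the planarity of complete stable minimal surfaces in $\RR^3$ enter essentially (through the blowup/blowdown step). Once that uniform density bound is in place, the log-cutoff step is conformally natural to the two-dimensional setting — the two-dimensionality makes $\int |\nabla\psi_k|^2 \to 0$ as $k\to\infty$ because the integrand $r^{-2}$ has exactly the right dimensional weight for annular integrals to grow only like $\log(r_2/r_1)$ — and the remaining passage from $A\equiv 0$ to ``the trace is a plane'' is routine.
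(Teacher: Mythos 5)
The paper does not prove Theorem \ref{thm:GL-Bern}; it cites the result to Gulliver--Lawson (with alternative references to Meeks--Perez--Ros and Colding--Minicozzi), so there is no internal proof to compare against. Assessing your outline on its own merits: the log-cutoff step is correct and standard once a uniform quadratic area bound is available, but your argument for that bound is circular and leaves a genuine gap.

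Your step to ``cap'' $\lim_{s\to 0}\Theta$ and $\lim_{s\to\infty}\Theta$ by rescaling does not go through. When you rescale about the origin by $\lambda_j\to 0$ or $\lambda_j\to\infty$, the scale-invariant estimate $|\sff_\Sigma|(x)\,d(x,0)\le C$ is preserved, so the subsequential limit is again a stable minimal surface in $\RR^3\setminus\{0\}$ with the same bound --- not a complete stable minimal surface in $\RR^3$ with bounded second fundamental form. Fischer--Colbrie--Schoen applies to the latter; applying it to your blowup/blowdown would require exactly the theorem you are trying to prove. Moreover, nothing in your argument shows that the local mass $|\Sigma\cap B_s(0)|$ is finite: a priori infinitely many sheets could accumulate near a plane through the origin, in which case the monotonicity formula gives nothing and $\int r^{-2}\,dA$ may be infinite, so the log-cutoff computation never starts. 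Relatedly, your opening claim that $r\circ\varphi$ is proper on $\Sigma$ does not follow from completeness away from $\{0\}$ alone; an immersion can be complete and still have sheets accumulating in a fixed annulus, so that $\varphi^{-1}(\{\tfrac 12\le r\le 2\})$ is noncompact. Ruling out such accumulation without presupposing the conclusion is exactly where the substance of the Gulliver--Lawson theorem lies, and your proof does not supply that.
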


Using this, we show the following removable singularity result for two-sided stable laminations. The fundamental strategy is somewhat similar to \cite{MePeRo13}, but thanks to the stability hypothesis (which is considerably stronger than the assumptions in \cite{MePeRo13}), we are able to give a relatively short argument, inspired by ideas in \cite{CCE}.

\begin{prop}[Removable singularities for two-sided stable laminations]\label{prop:remov-sing-two-sided-stab-lam}
Suppose that $(M,g)$ is a complete Riemannian three-manifold and $p \in M$. Suppose that for some $\varepsilon >0$, $\cL\subset B_{\varepsilon}(p)\setminus \{p\}$ is a minimal lamination with the property that any leaf $L \subset \cL$ has stable universal cover. Then, there is a smooth lamination $\widetilde \cL\subset B_{\varepsilon}(p)$ so that $\cL = \widetilde\cL\setminus\{p\}$.
\end{prop}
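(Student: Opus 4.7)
The strategy has two parts: first establish a scale-invariant curvature bound for $\cL$ near $p$, then use this bound together with the Gulliver--Lawson Bernstein theorem (Theorem \ref{thm:GL-Bern}) to extend the lamination across $p$.

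\textbf{Step 1 (scale-invariant curvature estimate).} The first step is to prove that there exist $C>0$ and $\delta\in(0,\varepsilon)$ so that
\[
|\sff_{\cL}|(x)\,d_g(x,p) \leq C \quad \text{for all } x\in \cL\cap (B_\delta(p)\setminus\{p\}).
\]
I would argue by contradiction: if no such bound holds, a standard point-picking argument (as in the proof of Lemma \ref{lemm:curv-est}) yields $\tilde x_j\in\cL$ with $\lambda_j:=|\sff_\cL|(\tilde x_j)\to\infty$ and $\lambda_j\,d_g(\tilde x_j,p)\to\infty$. Rescaling by $\lambda_j$ based at $\tilde x_j$, the singular point $p$ escapes to infinity, so subsequential limits yield a complete, non-flat minimal immersion in $\RR^3$ whose second fundamental form has length $1$ at the origin. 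Since each leaf of $\cL$ has stable universal cover, the limit leaf does too (as a smooth limit of such), and passing to the universal cover and then the two-sided double cover one contradicts the classical Bernstein theorem of Fischer-Colbrie--Schoen / do Carmo--Peng / Pogorelov for complete stable minimal surfaces in $\RR^3$.

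\textbf{Step 2 (tangent lamination at $p$).} With the estimate from Step 1 in hand, for any $r_j\to 0$ I would consider the rescaled laminations $\cL_j:=r_j^{-1}(\cL-p)$ in the rescaled ambient manifolds, which converge smoothly to $(\RR^3,g_{\RR^3})$. The curvature estimate is scale invariant, so $|\sff_{\cL_j}|(x)|x|\leq C$ on each annulus bounded away from the origin. Standard compactness yields a subsequential smooth limit $\cL_\infty$, a minimal lamination of $\RR^3\setminus\{0\}$; each leaf of $\cL_\infty$ inherits a stable universal cover as a smooth limit of such. Theorem \ref{thm:GL-Bern}, applied to the universal cover of each leaf, shows every leaf of $\cL_\infty$ is (contained in) a plane. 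Since any two non-parallel planes in $\RR^3$ intersect along a line, disjointness of leaves in a lamination forces $\cL_\infty$ to consist of parallel planes.

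\textbf{Step 3 (extending across $p$).} I would define $\widetilde\cL$ to be the closure of $\cL$ in $B_\varepsilon(p)$ and verify it is a smooth lamination. If $p\notin\overline\cL$, there is nothing to do. Otherwise, Step 2 shows that at every small scale $\cL$ is $C^k$-close to a lamination by parallel planes. The key and subtlest point is to establish that the direction of these planes is independent of the sequence of scales. I would approach this via a connectedness/discreteness argument on the set of tangent-plane directions in $G(2,3)$: the set is connected because the family of scales is, and it is discrete because two distinct limiting directions at overlapping scales would produce intersecting sheets, violating the lamination property at some intermediate scale. Once the common tangent plane $\Pi$ is identified, I would rotate so $\Pi=\RR^2\times\{0\}$ and write each accumulating leaf as a minimal graph $(x_1,x_2,u(x_1,x_2))$ over a punctured disk with $|\nabla u|\to 0$ at the puncture; a Bers-type removable singularity theorem for the minimal surface equation then extends each such graph smoothly through $p$, producing the desired $\widetilde\cL$.

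The main obstacle is the uniqueness of the tangent plane in Step 3. Steps 1 and 2 are essentially direct applications of Gulliver--Lawson (and its classical ancestors) to blow-ups and blow-downs of stable minimal laminations, whereas Step 3 must rule out the pathological possibility that $\cL$ "spins" at different scales near $p$, never actually closing up to a smooth lamination.
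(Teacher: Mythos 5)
Your Steps 1 and 2 are essentially correct and track the paper's opening moves, though the paper obtains the scale-invariant curvature bound more directly by citing Schoen's estimate for the stable universal covers rather than running a point-picking/blow-up argument; either works. Step 2's use of Theorem \ref{thm:GL-Bern} on the blow-up and the disjointness argument forcing the tangent lamination to consist of parallel planes is exactly what the paper does.

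Step 3, however, has a genuine gap. You reduce the problem to (i) showing uniqueness of the tangent plane direction across scales and then (ii) writing each accumulating leaf as a single-valued minimal graph $u(x_1,x_2)$ over a punctured disk and invoking a Bers-type removable singularity theorem. But (ii) presupposes exactly what must be ruled out: a leaf $L$ with $p$ in its closure could be a non-proper, infinitely-winding \emph{multigraph} (a spiraling sheet) over the punctured plane, rather than a single-valued graph. Such spiraling is fully compatible both with the scale-invariant curvature bound $|\sff_L|(x)\,d_g(x,p)<\tfrac14$ and with the uniqueness of the tangent plane direction at $p$, so your connectedness/discreteness argument in $G(2,3)$, even if it succeeds, does not close this possibility. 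This is precisely the hard case in removable singularity theorems for minimal laminations (cf.\ Meeks--Perez--Ros), and it is the case the paper spends most of its proof on: after showing (via a Morse-theoretic argument in the non-proper setting, following \cite[Appendix~E]{CCE}) that the blow-ups of the single leaf $L$ converge to a single plane $\RR^2\times\{0\}$ (not several parallel planes), the paper observes that $L\cap\partial B_{2\delta}(p)$ is either a union of simple closed curves (in which case $L$ is a properly embedded annulus and Proposition \ref{prop:remov-sing-proper} applies) or consists of spiraling curves. The spiraling case is then excluded by passing to the ``top sheet'' $L'$ of the lamination, which is a properly embedded disk; a small tubular neighborhood $\cU$ of $L'$ would force $L\cap\cU$ to be a multigraph over a simply connected domain, hence to contain a disk component, contradicting the spiraling. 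Your proposal needs some argument of this kind — ruling out spiraling leaves is the technical core of the proposition, not the identification of the tangent direction.

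A secondary, related omission: even your Step 3 premise that the blow-ups of a \emph{single} leaf $L$ consist of a unique plane (rather than several sheets of the parallel lamination) needs justification. The paper proves it by noting that if a disk component of $L\cap B_\delta(p)$ appeared in the limit, then a mountain-pass argument would make $L$ the union of that disk and an annular region stretching to $\partial B_\varepsilon(p)$, contradicting $p\in\overline L$. Without this step, one cannot even set up the graph or multigraph picture over a fixed plane that your Bers argument would require.
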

\begin{proof}
Because the claim is purely local, at several points we will replace $\cL$ with its intersection with some smaller ball $B_{\varepsilon'}(p)$. For simplicity, we will not relabel the resulting immersion or ball. Furthermore, we will always work in a normal coordinate system around $p$ (where we can assume the metric to be sufficiently close to Euclidean, by taking $\varepsilon>0$ sufficiently small). 

Observe that, taking $\varepsilon$ smaller if necessary, by Schoen's curvature estimates \cite{Sch83}, there is $C>0$ so that $|\sff_{\cL}|(x)d_{g}(x,p) \leq C$ for all $x \in \cL$. Hence, for any $\rho_{j}\to \infty$, passing to a subsequence, the laminations $\rho_{j}(\cL_{j}-p)$ converge to a smooth lamination $\cL_{\infty}$ of $\RR^{3}\setminus\{0\}$ away from $\{0\}$. Moreover each leaf in $\cL_{\infty}$ has stable universal cover and is complete away from $\{0\}$. Hence by Theorem \ref{thm:GL-Bern}, after rotating, $\cL_{\infty} = (\RR^{2}\times K) \setminus\{0\}$ for some closed set $K\subset\RR$.

Thus, taking $\varepsilon>0$ sufficiently small, we may guarantee that 
\begin{equation}\label{eq:remov-sing-imp-curv}
|\sff_{\cL}|(x) d_{g}(x,p) < \frac 1 4
\end{equation}
for all $x \in \cL$. 

Now, pick any leaf $L\subset \cL$ so that $p$ is in the (topological) closure of $L$. We claim that for any $\rho_{j}\to 0$, passing to a subsequence and rotating the coordinate chart, the surfaces $L_{j} : = \rho_{j}L$ converge to the lamination $(\RR^{2}\times \{0\}) \setminus\{0\}\subset \RR^{3}\setminus\{0\}$. To prove this, by our above argument, it is sufficient to show that there is not another plane $\Pi = \RR^{2}\times \{z\}$ in the lamination limit of $\rho_{j}L$. If such a plane did exist, then by curvature estimates, we would have locally smooth convergence to $\Pi$ (because $0\not \in \Pi$). At the scale of $L$, this would imply that there was some $\delta \in (0,\varepsilon)$ sufficiently small, so that $L \cap B_{\delta}(p)\setminus\{p\}$ contains a properly embedded component $D$ diffeomorphic to a disk, intersecting $\partial B_{\delta}(p)$ transversely. 

Now, by the curvature estimates \eqref{eq:remov-sing-imp-curv}, we claim that a Morse theory argument along the lines of Lemma \ref{lemm:ann-decomp} shows that $L \setminus B_{\delta}(p)$ contains an annular region connecting the disk $D$ to $\partial B_{\varepsilon}(0)$. Given this, because $L$ is connected (by definition) we see that it must be the union of $D$ with this annular region. This implies that $p$ cannot be in the closure of $L$, a contradiction. We emphasize that $L$ is not assumed to be proper, so we cannot simply apply Lemma \ref{lemm:ann-decomp}. However, the given curvature estimates and resulting compactness properties of the leaves are sufficient to handle the lack of properness in the proof of the mountain pass lemma; this has been carried out in detail in \cite[Appendix E]{CCE} in a completely analogous situation.

Thus, we may take $\delta\in(0,\varepsilon/3)$ sufficiently small so that after rotating the normal coordinate system, $L\cap \left(B_{3\delta}(p)\setminus B_{\delta}(p)\right)$
intersects $\partial B_{2\delta}(p)$ transversely, is contained in a $\delta/10$ neighborhood of the coordinate plane $\RR^{2}\times \{0\}$, and is a multigraph over this plane (cf.\ \cite[Lemma 4.1]{CCE}). Hence, the intersection $L \cap \partial B_{2\delta}(p)$ is the union of simple closed curves and injectively immersed curves which ``spiral near the equator'' of the sphere $\partial B_{2\delta}(p)$. First, assume that $L\cap \partial B_{2\delta}(p)$ contains a simple closed curve. Then, by the curvature estimate \eqref{eq:remov-sing-imp-curv} and the Morse theory argument used above, we see that $L\cap \partial B_{3\delta}(p)$ is a properly embedded annulus in $B_{3\delta}(p)\setminus\{p\}$. Hence, $L$ extends across $\{p\}$ by Proposition \ref{prop:remov-sing-proper}. 

Thus, it remains to consider the case that $L\cap \partial B_{2\delta}(p)$ consists of one or more spiraling curves. Our argument here is analogous to the technique of passing to the top sheet in \cite[Proposition 4.2]{CCE}. We have seen that $L\cap \partial B_{2\delta}(p)$ is contained in an $\delta/10$ neighborhood of the equator $(\RR^{2}\times \{0\} \cap \partial B_{2\delta}(p)$.   Taking a sequence of points $w_{j} \in L\cap \partial B_{2\delta}(p)$ with $x^{3}(w_{j})$ approaching $\sup_{w \in L\cap B_{2\delta}(p)} x^{3}(w)$, after passing to a subsequence, the points $w_{j}$ converge to $w'$, which lies in a ``top sheet'' $L'\subset \cL$. By construction, $L'\cap \partial B_{2\delta}(p)$ will contain a simple closed curve, and thus the Morse theory argument guarantees that $L' \cap B_{3\delta}(p)$ is a properly embedded topological disk or annulus. By Proposition \ref{prop:remov-sing-proper}, $L'$ extends across $\{p\}$. Similarly, we may pass to the bottom sheet to find a properly embedded $L'' \subset \cL$ which extends across $\{p\}$. Note that by construction $L'\not = L''$ (otherwise, there could not be any spiraling). 

Note that by the maximum principle, it cannot happen that both $L'$ and $L''$ contain $p$ (because they are smoothly, properly embedded in $B_{3\delta}(p)$), so we assume that $p\not \in L'$. However, this leads to a contradiction as follows: by construction and the curvature estimates, we can find a sufficiently small tubular neighborhood $\cU$ of $L'\cap B_{3\delta}(p)$ so that $L \cap \cU$ is a multigraph over $L'\cap B_{3\delta}(p)$. Because $L'\cap B_{3\delta}(p)$ is a disk (and thus is simply connected), this shows that at least one component of $L\cap \cU$ must be a disk, contradicting the assumed spiraling behavior of $L$. 
\end{proof}

\begin{rema}
We remark that by combining the curvature estimates \cite{Sch83,Ros:oneSided} with the removable singularity result \cite{MePeRo13}, the following strengthened version of Proposition \ref{prop:remov-sing-two-sided-stab-lam} holds: suppose that $\cL \subset B_{\varepsilon}(p)\setminus\{p\}$ is a lamination so that every leaf has a cover that is stable. Then, $\cL$ extends smoothly across $\{p\}$. Note that that this version is compatible with one-sided stability of leaves, while Proposition \ref{prop:high-dim-remov-sing} requires two-sided stability.\footnote{We emphasize that one-sided stability does not necessarily imply that the universal cover is stable; consider $\RR P^{2}$ in $\RR P^{3}$.}

We have not been able to find a self contained proof of this strengthened assertion, due to the fact that we do not know if the one-sided version of Theorem \ref{thm:GL-Bern} holds. It would thus be interesting to know if there can be a non-flat, one-sided stable immersion in $\RR^{3}$ that is complete away from the origin. Note that the surface cannot be injectively immersed, because then by the curvature estimates \cite{Ros:oneSided}, one could take the lamination closure away from $\{0\}$ and then apply \cite{MePeRo13}. 

Fortunately, for our purposes, the removable singularity result contained in Proposition \ref{prop:remov-sing-two-sided-stab-lam} is sufficient.
\end{rema}

\begin{coro}[cf.\ \cite{MePeRo13}]\label{coro:lam-min-pts-smooth-or-plane}
Suppose that $\cB\subset \RR^{3}$ is a finite set of points and $\Lambda \subset \RR^{3}\setminus \cB$ is a smooth lamination of $\RR^{3}\setminus \cB$ so that
\[
|\sff_{\Lambda}|(x) d_{\RR^{3}}(x,\cB) \leq C.
\]
Then, either $\Lambda$ extends smoothly across the points $\cB$ or it contains a plane. 
\end{coro}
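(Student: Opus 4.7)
The idea is to localize near each $p \in \cB$ and reduce to Proposition \ref{prop:remov-sing-two-sided-stab-lam}; the obstacle of possibly unstable leaves will be overcome by a blow-up argument that forces either extendability or the appearance of a plane.

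First I would reduce to a local statement. Choosing $\varepsilon > 0$ smaller than half the minimum separation of $\cB$, it suffices to show that for each $p \in \cB$, either $\Lambda \cap (B_\varepsilon(p) \setminus \{p\})$ extends smoothly to a lamination of $B_\varepsilon(p)$, or $\Lambda$ contains a plane. In this ball the hypothesis becomes the scale-invariant bound $|\sff_\Lambda|(x)\,d_{\RR^3}(x,p) \leq C'$. If every leaf of $\Lambda \cap (B_\varepsilon(p) \setminus \{p\})$ has stable universal cover, then Proposition \ref{prop:remov-sing-two-sided-stab-lam} directly gives the smooth extension across $p$, and performing this at every $p \in \cB$ yields a global extension $\widetilde\Lambda \subset \RR^3$, putting us in the first alternative.

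In the remaining case, some leaf fails to have stable universal cover near $p$. I would exploit scale invariance by blowing up: for any $\rho_j \to \infty$, the rescaled laminations $\rho_j(\Lambda - p)$ retain the bound $|\sff|(x)|x| \leq C'$ and, passing to a subsequence, converge smoothly away from $0$ to a minimal lamination $\Lambda_\infty$ of $\RR^3 \setminus \{0\}$ with $|\sff_{\Lambda_\infty}|(x)|x| \leq C'$. Since $\Lambda_\infty$ arises as a smooth lamination limit of minimal laminations, any leaf where the convergence has multiplicity at least two automatically has stable universal cover (the standard Meeks--Rosenberg argument); for the finitely many multiplicity-one leaves I would apply a log-cutoff/Fischer-Colbrie argument to arrange stability on a further small annulus about $0$. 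Then Proposition \ref{prop:remov-sing-two-sided-stab-lam} extends $\Lambda_\infty$ smoothly across $0$, and Theorem \ref{thm:GL-Bern} combined with Corollary \ref{coro:limit-lam-struct} forces the extended lamination to be a lamination of $\RR^3$ by parallel planes. Hence nearby sheets of $\Lambda$ converge graphically to one such plane $\Pi$, and the top/bottom-sheet technique in the proof of Proposition \ref{prop:remov-sing-two-sided-stab-lam}, together with Proposition \ref{prop:remov-sing-proper} and the maximum principle, produces an honest leaf of $\Lambda$ equal to (a punctured piece of) $\Pi$.

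The main obstacle I expect is the step of arranging stability for the multiplicity-one leaves of $\Lambda_\infty$: without an ambient sequence of surfaces to exploit, there is no built-in stability, and one must either use scale invariance of the curvature bound or iterate the blow-up on a suitable sub-scale to drive away any instability before Theorem \ref{thm:GL-Bern} can be invoked. The second subtle point is transferring the plane from $\Lambda_\infty$ back to $\Lambda$ itself: a plane in the blow-up is a priori only approximate in $\Lambda$, and ruling out spiraling approximations requires a careful application of the top/bottom-sheet argument together with embeddedness and the maximum principle.
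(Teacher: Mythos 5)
Your argument takes a different and, as you yourself flag, incomplete route. The paper's proof splits on \emph{properness}, not on stability of universal covers: if every leaf of $\Lambda$ is properly embedded in $\RR^3\setminus\cB$, then each extends across $\cB$ by Proposition~\ref{prop:remov-sing-proper} --- this needs only the curvature bound and properness, with no stability input at all; if some leaf is not proper, then $\Lambda$ contains a limit leaf $L$, which \emph{automatically} has stable universal cover (by the Jacobi-field argument of \cite[Lemma~A.1]{MeRo06}), the closure of $L$ in $\Lambda$ is a sublamination all of whose leaves have stable universal cover, Proposition~\ref{prop:remov-sing-two-sided-stab-lam} extends it across $\cB$, and the resulting complete stable leaf $L$ is forced to be a plane. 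The ``contains a plane'' alternative thus arises exactly from non-properness, where the required stability comes for free from limit leaves, while properness lets you bypass stability altogether.

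Your first alternative (all leaves have stable universal cover) is handled correctly, but the second branch has two real gaps. First, as you observe, there is no mechanism to produce stability for the multiplicity-one leaves of the blow-up lamination $\Lambda_\infty$: the corollary's hypotheses give only the scale-invariant curvature bound, and unlike Lemma~\ref{lemm:lam-lim-planes} there is no ambient sequence $\Sigma_j$ carrying an index bound, so the appeals to Theorem~\ref{thm:GL-Bern} and Corollary~\ref{coro:limit-lam-struct} (which require stability or finite index) are unjustified and cannot be repaired by iterating the blow-up. Second, even granting that $\Lambda_\infty$ were a lamination by planes, the transfer-back step asserting ``an honest leaf of $\Lambda$ equal to a punctured piece of $\Pi$'' does not follow: a blow-up of a smooth surface at a regular point is simply its tangent plane, which encodes that $\Lambda$ \emph{extends} across that point rather than that $\Lambda$ contains a planar leaf. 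Your second case must therefore itself contain both alternatives of the corollary, and distinguishing them requires the proper/non-proper split; once you make that split, Proposition~\ref{prop:remov-sing-proper} handles all proper leaves directly and the blow-up machinery becomes unnecessary.
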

\begin{proof}
If each leaf is properly embedded in $\RR^{3}\setminus \cB$, then Proposition \ref{prop:remov-sing-proper} implies that $\Lambda$ extends across $\cB$. Otherwise, $\Lambda$ contains some limit leaf $L$, which has stable universal cover by standard arguments (cf.\ \cite[Lemma A.1]{MeRo06}). Then the closure of $L$ in $\Lambda$ is a non-empty lamination consisting entirely of leaves with stable universal cover. Proposition \ref{prop:remov-sing-two-sided-stab-lam} then guarantees that the closure of $L$ extends smoothly across $\cB$. Hence, we see that $L$ must be a plane, by \cite{Fischer-Colbrie-Schoen,doCarmoPeng,Pogorelov}. 
\end{proof}

Finally, we need the following removable singularity result valid in higher dimensions. 

\begin{prop}\label{prop:high-dim-remov-sing}
Suppose that $(M^{n},g)$ is a complete Riemannian $n$-dimensional manifold, for $4\leq n\leq 7$ and for some $\varepsilon>0$, $\Sigma\subset B_{\varepsilon}(p)\setminus\{p\}$ is a properly embedded stable minimal hypersurface. Then, $\Sigma$ smoothly extends across $p$.
\end{prop}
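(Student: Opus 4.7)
The proof will closely parallel that of Proposition \ref{prop:remov-sing-proper}, with the higher-dimensional Bernstein-type theorem replacing its three-dimensional counterpart.

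First, by Lemma \ref{lem:two-sided-small-balls}, after shrinking $\varepsilon$, I may assume $\Sigma$ is two-sided. Since $\Sigma$ is stationary in $B_\varepsilon(p)\setminus\{p\}$ and a point has codimension $n\geq 4\geq 2$, a standard cutoff-function argument extends $\Sigma$ to a stationary integer rectifiable varifold on all of $B_\varepsilon(p)$. The monotonicity formula then yields Euclidean volume growth $\mathcal{H}^{n-1}(\Sigma\cap B_r(p))\leq Cr^{n-1}$, and combining stability with the Schoen--Simon curvature estimates (applicable because $n\leq 7$) produces the scale-invariant bound $|\sff_\Sigma|(x)\,d_g(x,p)\leq C$ for $x\in\Sigma$ close to $p$.

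Any tangent cone $\Sigma_\infty$ of $\Sigma$ at $p$ is then a stable minimal cone in $\RR^n$ that is smooth away from the origin and has Euclidean volume growth. By the Bernstein-type theorem invoked in the proof of Lemma \ref{lemm:curv-est-nD} (combining work of Simons, Schoen--Simon--Yau, and Schoen--Simon), $\Sigma_\infty$ must be a union of parallel planes; since it is a cone, these planes all pass through the origin and hence coincide, so $\Sigma_\infty$ is a single plane through the origin with some integer multiplicity $k\geq 1$. A blow-up argument identical to the one in Proposition \ref{prop:remov-sing-proper} then upgrades the curvature estimate near $p$ to $|\sff_\Sigma|(x)\,d_g(x,p)<\tfrac{1}{4}$ on some $\Sigma\cap B_\delta(p)$, and the Morse-theoretic argument of Lemma \ref{lemm:ann-decomp} decomposes $\Sigma\cap B_\delta(p)$ as a disjoint union of finitely many annular regions of topological type $\SS^{n-2}\times(0,1]$.

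Rotating normal coordinates at $p$ so the tangent plane is $\{x^n=0\}$, each annular region is a multigraph over this plane with small gradient; because $\SS^{n-2}$ is simply connected for $n\geq 4$, embeddedness forces each such region to be a single graph $\{x^n=u_i\}$ of a smooth solution to the minimal surface equation on a punctured disk. Standard removable singularity results for bounded smooth solutions of the minimal surface equation then show each $u_i$ extends smoothly across the puncture. Finally, the strong maximum principle rules out multiplicity $k\geq 2$: two distinct extended disks meeting at $p$ with a common tangent plane would have to coincide, contradicting embeddedness of $\Sigma$. The resulting smooth extension yields the desired $\widetilde\Sigma\subset B_\varepsilon(p)$.

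I expect the most delicate step to be the initial curvature estimate $|\sff_\Sigma|(x)\,d_g(x,p)\leq C$, which requires careful handling of the extension-across-a-point together with monotonicity; once this is in place, the remainder of the argument transcribes Proposition \ref{prop:remov-sing-proper} essentially verbatim, with the Bernstein rigidity for stable minimal cones in $\RR^n$, $n\leq 7$, playing the role of the Gulliver--Lawson theorem used in three dimensions.
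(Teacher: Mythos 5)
Your proof follows essentially the same strategy as the paper's: establish the scale-invariant curvature estimate $|\sff_\Sigma|(x)\,d_g(x,p)\leq C$ from stability via a blow-up argument, conclude that tangent cones at $p$ are multiplicity-$k$ hyperplanes using Simons' rigidity for stable minimal cones in $\RR^n$, $n\leq 7$, apply the Morse-theoretic Lemma \ref{lemm:ann-decomp} to decompose $\Sigma$ near $p$ into finitely many annular components diffeomorphic to $\SS^{n-2}\times(0,1)$, and then extend across $p$ and rule out $k\geq 2$ by the strong maximum principle. The differences are cosmetic: you pass to a single-valued graph over the tangent plane (using that the punctured disk is simply connected for $n\geq 4$) and invoke a removable-singularity result for the minimal surface equation, while the paper observes directly that each annular component has a multiplicity-one planar tangent cone and applies Allard's regularity theorem; and you argue two-sidedness by a punctured-ball variant of Lemma \ref{lem:two-sided-small-balls}, whereas the paper notes that the compact smooth cross-section of the tangent cone separates $\SS^{n-1}$ by Alexander duality.
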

\begin{proof}
Stability of the hypersurface implies that $|\sff_{\Sigma}|(x) d_{g}(x,p) \leq C$. This follows from a blow-up argument as in Lemma \ref{lemm:curv-est-nD}, based on \cite{SSY,Schoen-Simon:1981} and the fact that a properly embedded hypersuface in $\RR^{n}$ is two-sided (cf.\ \cite{Samelson}).

As usual, the claim is purely local, so there is no harm with assuming that $\varepsilon> 0$ is sufficiently small. The curvature estimates and properness guarantee that $\vol_{g}(\Sigma) < \infty$. Thus, the monotonicity formula allows us to consider the tangent cones to (the varifold closure of) $\Sigma$ at $p$. The curvature estimates guarantee all of the tangent cones have smooth, compact, connected, cross section in $\SS^{n-1}$. It is well known that (by Alexander duality, cf.\ \cite{Samelson}) compact embedded hypersurfaces separate $\SS^{n-1}$, and are thus two-sided. Thus, the tangent cones themselves are two-sided. Because the rescalings of $\Sigma_{j}$ converge smoothly away from the origin to the tangent cones, we may see that the cones are stable precisely in the sense needed to apply \cite{Simons:cones}. This allows us to conclude that all tangent cones to $\Sigma$ at $p$ are hyperplanes (possibly with multiplicity).

In particular, taking $\varepsilon >0$ smaller if necessary, we may arrange that
\[
|\sff_{\Sigma}|(x) d_{g}(x,p) \leq \frac 14.
\]
Then, using a Morse theoretic argument along the lines of Lemma \ref{lemm:curv-est-nD} (and taking $\varepsilon>0$ smaller if necessary), we may arrange that each of the (bounded number of) components of $\Sigma$ is diffeomorphic to $\SS^{n-2}\times (0,1)$ and each component intersects $\partial B_{s}(p)$ transversely in a connected submanifold for $s \leq \varepsilon$. From this, it is not hard to see that any tangent cone to (the varifold closure of) the hypersurface $\widetilde{\Sigma}$ at $p$ is a multiplicity one plane. Thus, by Allard's theorem \cite{Allard:varifold}, $\widetilde{\Sigma}$ extends smoothly across $p$. Using the maximum principle, we thus see that there can be only one component of $\Sigma$ whose closure includes $p$. This completes the proof. 
\end{proof}

\section{Examples of degeneration}\label{app:exam-degen}

We give examples to illustrate the ``one point of concentration'' and ``multiple points of concentration'' discussed in Propositions \ref{prop:one-point-conc} and \ref{prop:mult-point-conc}. Recall that (see \cite{Costa:1984,HoffmanMeeks,HoffmanKarcher}) the Costa surface $\Sigma^{(1)}\subset \RR^{3}$ is an embedded minimal surface with genus one and three ends (one flat end, and two catenoidal ends) Furthermore, Hoffman--Meeks have shown that it is possible to deform the flat end of $\Sigma_{1}$ into a catenoidal end, producing a family $\Sigma^{(t)}$ for $t\geq 1$ of embedded genus three embedded minimal surfaces with three catenoidal ends. As $t\to\infty$, the logarithmic growth of middle catenoidal end approaches that of the  other end pointing in the same direction. See \cite[Figure 3.2]{HoffmanKarcher} or Figure \ref{fig:Costa-deform} for an illustration of the deformation family for $t$ large.

The exact index of $\Sigma^{(t)}$ seems to be unknown for $t>1$ (note that $\Index(\Sigma^{(1)}) = 5$ by \cite{Nayatani:CHM}). However, because the family $\Sigma^{(t)}$ has uniformly bounded total curvature, the main result of \cite{Tysk} implies that $\Index(\Sigma^{(t)}) \leq I$ for some $I \in \NN$. We will always assume that $\Sigma^{t}$ is scaled so that the second fundamental form's maximal norm is equal to $1$ and so that the line $\{x^{1}=x^{2}=0\}$ is the intersection of the two planes of reflection symmetry. 

\begin{figure}[h]
\begin{tikzpicture}

\begin{scope}[scale = 2]
\clip (-3,-1) rectangle (3,1);
\draw (-5,0) to [bend right = 1] (5,0);
\draw (-5,.7) to [bend right = 2] (5,.7);
\draw (-5,-.839) to [bend left = 3] (5,-.839);

\filldraw [white] (-.2,-.3) rectangle (.2,1);
\draw (-.21,-.05) to [bend right=90, looseness = .7] (-.21,.6);
\draw (.21,-.05) to [bend left=90, looseness = .7] (.21,.6);
\draw (-.08,.28) to [bend right = 30] (.08,.28);
\draw [opacity = .4] (-.08,.28) to [bend left = 30] (.08,.28);

\foreach \x in {-1.09, 1.09}{
\begin{scope}[shift = {(\x,-.645)}]
	\filldraw [white] (-.2,-.3) rectangle (.2,1);
	\draw (-.21,-.05) to [bend right=90, looseness = .7] (-.21,.6);
	\draw (.21,-.05) to [bend left=90, looseness = .7] (.21,.6);
	\draw (-.08,.28) to [bend right = 30] (.08,.28);
	\draw [opacity = .4] (-.08,.28) to [bend left = 30] (.08,.28);
\end{scope}
}
\end{scope}
\filldraw (0,.52) circle (1pt);
\draw [dashed] (0,.52) circle (.8);

\filldraw (-2.17,-.785) circle (1pt);
\draw [dashed] (-2.17,-.785) circle (.8);

\filldraw (2.18,-.77) circle (1pt);
\draw [dashed] (2.18,-.77) circle (.8);
\end{tikzpicture}
\caption{This depicts the Hoffman--Meeks deformation $\Sigma^{(t)}$ of the Costa surface for $t$ large. The surface looks like three planes joined by three catenoidal necks. The marked points are the points of curvature concentration, and at the scale of curvature around any of them, $\Sigma^{(t)}$ is geometrically close to a catenoid. By choosing the scaling appropriately, one may arrange that the point of concentration stay at a bounded distance, or all converge to the origin.}\label{fig:Costa-deform}
\end{figure}
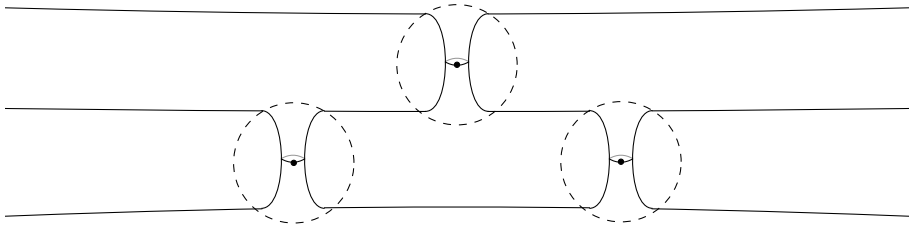

First, to illustrate the case of one point of concentration simply consider 
\[
\Sigma_{j} = \left(\frac 1 j \Sigma^{(1)}\right) \cap B_{r_{j}}(0)
\] 
for some $r_{j}\to\infty$. As $r\to\infty$, this converges smoothly away from the origin to the plane $\RR^{2}\times\{0\}$ with multiplicity $3$. Clearly this satisfies \hyperlink{defi:aleph}{$(\aleph)$} with one point of curvature concentration $p_{j}$ converging to $0$ as $j\to\infty$. Rescaling the sequence at the scale of curvature around $p_{j}$ simply finds $\Sigma_{1}$. 

Second, to illustrate the various possibilities for multiple points of concentration, we must describe the behavior of $\Sigma^{(t)}$ as $t\to\infty$ more precisely. One may show that for $\rho_{j}\to 0$ sufficiently quickly,
\[
\Sigma_{j} : = \left( \rho_{j}\Sigma^{(j)}\right) \cap B_{r_{j}}(0)
\]
looks like three nearby disks, with the middle disk jointed to the bottom disk by two catenoidal necks in equal and opposite directions from the origin and the middle disk joined to the top disk by a single catenoidal neck near the origin. This is well illustrated in \cite[Figure 3.2]{HoffmanKarcher}; see also Figure \ref{fig:Costa-deform}. To establish this picture rigorously, one may appeal to \cite[Theorem 2]{Ros:compactnessFTC} and the fact that the catenoid is the only non-flat embedded minimal surface $\hat\Sigma \subset \RR^{3}$ with total curvature $\int_{\hat\Sigma} \kappa >  - 12 \pi$ (cf.\ \cite[Theorem 3.1]{HoffmanKarcher}). 

In particular, the blow-up set $\cB_{j}$ has $|\cB_{j}| = 3$ and rescaling around any such point is a catenoid. However, if we chose $\rho_{j}\to 0$ sufficiently quickly, $\cB_{j}$ converges to $\cB_{\infty}= \{0\}$ as $j\to\infty$. On the other hand, if $\rho_{j}\to 0$ at precisely the correct rate, it is clear that (after a rotation) $\cB_{j}$ converges to $\{(0,0,0), (\pm 1,0,0)\} \subset \RR^{3}$. This example, and considerably more refined examples are discussed in great detail in \cite{Traizet}. 

\bibliography{bib} 
\bibliographystyle{amsalpha}

\end{document}